\documentclass[preprint,3p,12pt]{elsarticle}

\newcommand{\pathLaTeX}{.}


%

\usepackage[utf8]{inputenc}
\usepackage[ngerman,english]{babel}
\usepackage[T1]{fontenc}
\usepackage{marvosym} 

\usepackage{lmodern} 
\usepackage{microtype}

\usepackage[numbers]{natbib}
\usepackage{graphicx}
\usepackage{caption}
\usepackage{subcaption}
\usepackage{xcolor}
\usepackage{tabularx}

\usepackage[final]{animate}


\usepackage{mathtools}
\usepackage{amssymb}
\usepackage{amsfonts}
\usepackage{amsbsy} 
\usepackage{amsopn} 
\usepackage{amstext} 
\usepackage{amsxtra} 
\usepackage{bbm} 


\usepackage{cancel} 
\usepackage{amscd} 
\usepackage[all]{xy} 
\usepackage{esvect} 
\usepackage{esint} 
\usepackage{empheq} 
\usepackage{eqnarray} 

\usepackage{lscape}
\usepackage{blindtext}
\usepackage{listings}
\usepackage{url}
\usepackage{textcomp} 
\usepackage{exscale} 
\usepackage{xspace} 
\usepackage{scalerel} 
\usepackage{stackengine} 
\usepackage{hhline}
\usepackage{lineno}

\PassOptionsToPackage{svgnames,dvipsnames}{xcolor}
\usepackage{tcolorbox}
\tcbuselibrary{skins,breakable}

\setlength\parindent{0pt}

\usepackage{amsthm} 

\theoremstyle{definition}
\newtheorem{theorem}{Theorem}
\newtheorem{lemma}{Lemma}
\newtheorem{corollary}{Corollary}
\theoremstyle{remark}
\newtheorem{remark}{Remark}
\newtheorem{example}{Example}

\addto\captionsenglish{}

\usepackage{varioref} 
\usepackage[colorlinks=true,citecolor=blue,urlcolor=blue]{hyperref} 
\usepackage[capitalise]{cleveref} 

%

\renewcommand{\l}{\ensuremath{\left(}} 
\renewcommand{\r}{\ensuremath{\right)}} 
\newcommand{\lb}{\ensuremath{\left[}}
\newcommand{\rb}{\ensuremath{\right]}}
\newcommand{\lc}{\ensuremath{\left\lbrace}}
\newcommand{\rc}{\ensuremath{\right\rbrace}}


\newcommand{\dx}{\ensuremath{\,\mathrm{d}\vec x}}
\newcommand{\dhx}{\ensuremath{\,\mathrm{d}\vec{\hat x}}}
\newcommand{\ds}{\ensuremath{\,\mathrm{d} s}}

\renewcommand{\d}{\,\mathrm{d}} 






\newcommand{\N}{\ensuremath{\mathbb{N}}}

\newcommand{\R}{\ensuremath{\mathbb{R}}}






\newcommand{\diag}{\mathrm{diag}}
\let\vphi\phi
\renewcommand{\phi}{\ensuremath{\varphi}}

\renewcommand{\epsilon}{\ensuremath{\varepsilon}}

\renewcommand{\theta}{\ensuremath{\vartheta}}
\let\vrho\rho
\renewcommand{\rho}{\ensuremath{\varrho}}
\newcommand*{\bin}[2]{\ensuremath{
\begin{pmatrix}#2\\#1\end{pmatrix}}}
\renewcommand{\vec}[1]{\ensuremath{\boldsymbol{#1}}}
\newcommand{\mat}[1]{\ensuremath{\mathbf{#1}}}



\definecolor{myred}{RGB}{182,59,87}
\definecolor{myblue}{RGB}{116,135,197}
\definecolor{mygreen}{RGB}{52,152,126}
\definecolor{myorange}{RGB}{227,149,87}

\newcommand*\reallywidehat[1]{%
\savestack{\tmpbox}{\stretchto{%
\scaleto{%
\scalerel*[\widthof{\ensuremath{#1}}]{\kern-.6pt\bigwedge\kern-.6pt}%
{\rule[-\textheight/2]{1ex}{\textheight}}
}{\textheight}%
}{0.5ex}}%
\stackon[1pt]{#1}{\tmpbox}%
}

\makeatletter
\newcommand*{\eqqcolon}{=\mathrel{\rlap{%
\raisebox{0.38ex}{$\m@th\cdot$}}%
\raisebox{-0.38ex}{$\m@th\cdot$}}%
}
\makeatother

\NewEnviron{alignS}{
\begin{align*}
\scalebox{0.75}{$\BODY$}
\end{align*}
}

\definecolor{keywordcolor}{rgb}{0,0.25,0.5}
\definecolor{commentcolor}{rgb}{0.2,0.5,0.2}
\definecolor{stringcolor}{rgb}{0.5,0.5,0.2}
\lstset{language=Matlab,
frame=trbl,
captionpos=t, 
basicstyle=\ttfamily\scriptsize,
emptylines=0, 
breaklines=true, 
postbreak=\mbox{\ensuremath\hookrightarrow}, 
showstringspaces=true,
stringstyle=\color{stringcolor},
upquote=true, 
commentstyle=\color{commentcolor}\slshape,
deletekeywords={home, dir, zeros, ones, beta, eye},
keywordstyle=\color{keywordcolor}\bfseries,
deletekeywords={home, dir, grid, mesh},
morekeywords={blkdiag, case, cat, cd, cell, cell2struct, celldisp, cellplot, char, class, classdef, commandwindow, commandhistory, continue, delete, doc, double, edit, ezplot, false, filebrowser, fplot, func2str, function_handle, isa, iscell, isequal, islogical, isstruct, logical, ls, methods, mkdir, mldivide, movefile, mrdivide, mtimes, otherwise, parfor, properties, quad, quad2d, repmat, rmdir, spmd, struct,  str2func, struct2cell, switch, true, try, type, bsxfun}}

\crefname{listing}{Algorithm}{Algorithms}

%

\captionsetup{format=hang}

\journal{Computers \& Mathematics with Applications}

\begin{document}
\begin{frontmatter}

\title{Monolithic convex limiting in discontinuous Galerkin discretizations of hyperbolic conservation laws}

\author{Hennes Hajduk}
\ead{hennes.hajduk@math.tu-dortmund.de}

\address{Institute of Applied Mathematics (LS III), TU Dortmund
University,\\ Vogelpothsweg 87, D-44227 Dortmund, Germany}

\begin{abstract}
In this work we present a framework for enforcing discrete maximum principles in discontinuous Galerkin (DG) discretizations. 
The developed schemes are applicable to scalar conservation laws as well as hyperbolic systems.
Our methodology for limiting volume terms is similar to recently proposed methods for continuous Galerkin approximations, while DG flux terms require novel stabilization techniques.
Piecewise Bernstein polynomials are employed as shape functions for the DG spaces, thus  facilitating the use of very high order spatial approximations.
We discuss the design of a new, provably invariant domain preserving DG scheme that is then extended by state-of-the-art subcell flux limiters to obtain a high-order bound preserving approximation.
The limiting procedures can be formulated in the semi-discrete setting. Thus convergence to steady state solutions is not inhibited by the algorithm.
We present numerical results for a variety of benchmark problems. Conservation laws considered in this study are linear and nonlinear scalar problems, as well as the Euler equations of gas dynamics and the shallow water system.
\end{abstract}

\begin{keyword}
discontinuous Galerkin methods \sep
discrete maximum principles \sep
limiters \sep
Bernstein polynomials \sep
Euler equations of gas dynamics \sep
shallow water equations
\end{keyword}
\end{frontmatter}

\section{Introduction}

For many partial differential equations (PDE) the analytical solution is known to be member of a convex set, called \textit{invariant domain}, of physically admissible states \cite{berthon2005,guermond2016}.
Stabilized finite element methods for hyperbolic conservation laws often rely on limiters or other shock-capturing techniques to prevent the numerical approximation from dropping out of the invariant domain.
Computational methods which prohibit the occurrence of such nonphysical solutions are therefore called \textit{invariant domain preserving} (IDP) \cite{guermond2016,guermond2017,guermond2018}.
Many representatives of such schemes (e.g. \cite{badia2017,barrenechea2016,guermond2016,kuzmin2012a,lohmann2017}) use algebraic flux correction (AFC) techniques to
combine a (provably IDP) \textit{low order method} with a corresponding high-order \textit{target scheme}.

Low order methods for AFC are obtained by performing mass lumping and adding diffusive matrices to the semi-discrete formulation.
The minimal amount of such stabilization  for scalar problems is provided by \textit{discrete upwinding} \cite{kuzmin2001,kuzmin2002}.
For a general hyperbolic problem, the appropriate amount of \textit{graph viscosity} is proportional to the maximal wave speed. Guermond and Popov \cite{guermond2016} analyzed the so-defined \textit{Rusanov-type dissipation }\cite{abgrall2006,guermond2014} and proved the IDP property of the resulting low-order method using its representation
in terms of the so-called \textit{bar states}.
Lohmann et al. \cite{lohmann2017} improved discrete upwinding by preconditioning the convective matrices in a way that makes this low order scheme better applicable to high order finite element spaces.
Based on this upwinding strategy, various high order extensions were considered in \cite{hajduk2020a,kuzmin2020a,lohmann2017}.
The concept of residual distribution \cite{abgrall2006,abgrall2010} makes it possible to design matrix-free low order methods for scalar equations \cite{hajduk2020a,hajduk2020b}.
These approaches provide an efficient alternative to matrix-based upwinding but their generalization to systems is not straightforward.

Most of the limiters mentioned so far utilize the \textit{Flux-Corrected-Transport} (FCT) methodology \cite{boris1973,kuzmin2012,zalesak1979}.
This class of schemes is based on the fully discrete formulation and predictor-corrector algorithms which do not converge to steady-state solutions.
Monolithic limiting techniques were recently designed in \cite{hajduk2020b,kuzmin2012a,lohmann2019} to overcome this shortcoming of FCT and construct nonlinear problems with well-defined residuals.
Building on the analysis of Guermond and Popov \cite{guermond2016}, Kuzmin \cite{kuzmin2020} proposed a new monolithic limiter for enforcing the IDP property for general hyperbolic problems discretized using (multi-)linear continuous finite elements.
A generalization to higher order space discretizations can be found in \cite{kuzmin2020a}.

In this work, we extend these concepts to discontinuous Galerkin (DG) approximations of scalar equations and hyperbolic systems. 
Classical DG discretizations make use of total variation bounded (TVB) slope limiters or weighted essentially non-oscillatory (WENO) schemes (e.g. \mbox{\cite{barth1989,krivodonova2007,kuzmin2010,zhang2010}})
rather than algebraic limiting techniques.
However, the AFC methodology can be advantageous in the DG setting as well \cite{anderson2017,badia2017,hajduk2020a,pazner-preprint} since it guarantees the IDP property, in contrast to geometric limiters.
A~high order IDP-DG scheme for nonlinear hyperbolic conservation laws was recently developed by Pazner \cite{pazner-preprint} who constrained a collocated spectral element approximation using FCT.
In their recent work \cite{guermond2019}, Guermond et al. mentioned the possibility of unified algebraic limiting for fluxes and volume integrals of DG schemes.
In the present paper, we handle the DG fluxes separately, which enables us to sparsify the element matrices of the high order discrete gradient operator via the application of the local mass lumping preconditioner proposed by Lohmann et al. \cite{lohmann2017}.
All currently available AFC-DG methods are designed for linear scalar problems and/or equipped with FCT limiters which inhibit convergence to steady state solutions.
The methodology that we propose yields the first monolithic limiting framework for high order DG discretizations of hyperbolic systems.

The approach presented in this work employs Bernstein polynomials as basis functions, as has been done previously, e.g., in \cite{abgrall2010,ainsworth2011,anderson2017,kirby2011,lohmann2017}).
It is suitable for tensor-product elements (lines in 1D, quadrilaterals in 2D, hexahedra in 3D) -- which we conflate using the term \textit{box elements} -- as well as simplices and prisms.
Limiting for volume terms works similarly to the corresponding methods in \cite{guermond2016,kuzmin2020,kuzmin2020a}, while DG flux terms are handled by a generalization of our approach for the advection equation \cite{hajduk2020a}.
The concept of bar states, which was originally introduced for volume terms of continuous Galerkin discretizations \cite{guermond2016,guermond2018,kuzmin2020}, translates naturally to the DG context leading to simple proofs of the IDP property.
The monolithic limiting strategy and customized treatment of DG flux terms distinguish our approach from the 
{\it discretization-independent} AFC tools suggested in \cite{guermond2019}.
The presented methods are tested on benchmark configurations for the advection and Burgers problems, as well as for the systems of Euler and shallow water equations.

The structure of this article is as follows:
\cref{sec:dg} describes the standard DG discretization, followed by the derivation of the low order method in \cref{sec:low}. 
Limiting aspects are discussed in \cref{sec:lim}. Numerical examples are presented in \cref{sec:results} and conclusions are drawn in \cref{sec:conclusion}.
Further properties of the methods are detailed in the Appendix.

\section{Standard DG discretization}\label{sec:dg}
Let $U(\vec x,t) \in \R^m,~m \in \N$ be the vector of conserved unknowns at location $\vec x \in \R^d,~d\in \{1,2,3\}$ and time $t \geq 0$.
We consider the initial value problem
\begin{subequations}\label{eq:ivp}
\begin{align}
\frac{\partial U}{\partial t} + \nabla \cdot \mat F(U) =\;& 0\phantom{u_0} \qquad \mbox{in } \Omega \times \R_+, \label{eq:pde}\\
U(\cdot,0) =\;& U_0\phantom{0} \qquad \mbox{in }\Omega, \label{eq:ic}
\end{align}
\end{subequations}
where $\Omega \subseteq \R^d$ is a Lipschitz domain, 
$\mat F(U) \in \R^{m \times d}$ is the hyperbolic flux matrix, and $U_0(\vec x) \in \R^m$ denotes some initial datum.
A convex set $\mathcal G \subseteq \R^m$ such that $U_0(\vec x) \in \mathcal G$ for all $\vec x \in \Omega$, is called an invariant set of \eqref{eq:ivp} if $U(\vec x,t) \in \mathcal G$ for all $\vec x\in\Omega$ and $t\geq0$ \cite{guermond2016}.
To obtain a well-posed problem, \eqref{eq:ivp} has to be equipped with suitable boundary conditions on $\Gamma = \partial\Omega$.
In the scalar case ($m=1$), boundary values $U_{\textrm{in}}$ are imposed weakly on the inflow boundary $\Gamma_- = \{\vec x \in \Gamma:~\mat F^\prime(U) \cdot \vec n(\vec x) < 0\},$ where $\vec n$ is the unit outward normal to $\Gamma$.
The definition of the invariant set $\mathcal G=[U^{\min},U^{\max}]$ must be modified to include $U_{\textrm{in}}$ \cite{guermond2016,kuzmin2020}.
For systems of conservation laws ($m>1$) one has to distinguish between various boundary types, some of which are mentioned in \cref{sec:results}.
Alternatively, one can use periodic domains to avoid the issue of handling boundary conditions altogether.

To discretize \eqref{eq:pde}, we decompose $\Omega$ into elements $K^e$ of a triangulation $\mathcal T_h = \lc K^1,\hdots,K^{E} \rc$, such that $\overline{\Omega} = \cup_{e=1}^{E} K^e$.
Every $K^e \in \mathcal T_h$ is assumed to be the image of a certain reference element $\hat K \subseteq \R^d$ under a $C^1$-mapping $\vec F^e$, i.e., $K^e = \vec F^e (\hat K)$, and $\mat J^e(\vec x) \coloneqq \nabla \vec F^e(\vec x) \in \R^{d\times d}$ denotes the Jacobian of $\vec F^e$.
For $p \in \N_0$, we define standard polynomial spaces $\mathbb V_p(K^e)$ based on the geometry of elements
\begin{align*}
\mathbb V_p(K^e) = \begin{cases}
\mathbb P_p(K^e) & \mbox{if } K^e \mbox{ is a simplex,} \\
\mathbb Q_p(K^e) & \mbox{otherwise.}
\end{cases}
\end{align*}
For a given mesh and polynomial degree, the DG finite element spaces read
\begin{align}\label{eq:dgspace}
\mathbb V_p \l \mathcal T_h \r = \lc \phi_h \in L^2(\Omega):~ \phi_h \mid_{K^e} \in \mathbb V_p(K^e)~ \forall K^e \in \mathcal T_h \rc.
\end{align}
A set of basis functions for \eqref{eq:dgspace} can have support which is local to only one respective element.
Hence, for each element, we can choose basis functions $\lc \phi_i^e \rc_{i=1}^{N}$ of $\mathbb V_p(K^e)$, where $N = N(e) = \dim \mathbb V_p(K^e)$ is the local number of unknowns.
On $K^e$, we can thus write the solution as
\begin{align}\label{eq:u-elem}
U_h^e(\vec x,t) \coloneqq U_h(\vec x,t)\mid_{K^e} = 
\sum_{i=1}^N u_i^e(t) \, \phi_i^e(\vec x), \qquad u_i^e(t) \in \R^m.
\end{align}
Testing \eqref{eq:pde} with $V_h \in \mathbb V_p(K^e)^d$ and using integration by parts yields
\begin{align}\label{eq:int-parts}
\int_{K^e}  V_h \cdot \frac{\partial U_h^e}{\partial t}\dx
-\int_{K^e} \nabla V_h : \mat F(U_h^e)\dx 
+\int_{\partial K^e} V_h \cdot \mat F(U_h^e) \, \vec n^e \ds = 0,
\end{align}
where, $\cdot$ and $:$ denote the standard vector and matrix scalar products and $\vec n^e$ is the outward unit normal to $K^e$.
The boundary integral can be split into integrals over separate open \textit{faces} $\Gamma_k^e$, such that $\partial K^e = \cup_{k=1}^{n_f} \overline{\Gamma_k^e}$. 
In this context, we use the term face to describe the $(d-1)$-dimensional interfaces with other elements (points in 1D, lines in 2D,...).
By $n_f = n_f(e)$ we denote the cardinality of this partition (e.g. $n_f=2$ in 1D, $n_f=3$ for triangular elements, $n_f=4$ for quadrilaterals,...).
Due to the discontinuous space approximation \eqref{eq:dgspace}, $U_h^e$ is not uniquely defined in $\vec x \in \Gamma_k^e$; its one-sided limits are given by
\begin{align*}
U_h^e(\vec x,t) = U_h^{e,-}(\vec x,t) \coloneqq\;& \lim_{\epsilon \to 0_+}
U_h^e(\vec x - \epsilon \vec n^e, t),\\
\hat U_h^e(\vec x,t) = U_h^{e,+}(\vec x,t) \coloneqq\;& \lim_{\epsilon \to 0_+} U_h^e(\vec x + \epsilon \vec n^e, t),
\end{align*}
i.e., values from the interior are denoted by $U_h^e$ and exterior ones by $\hat U_h^e$.

To obtain local conservation, we replace $\mat F(U_h^e) \, \vec n^e$ in \eqref{eq:int-parts} by a numerical flux $\mathcal H(U_h^e, \hat U_h^e; \vec n^e)$, satisfying the following properties
\begin{subequations}\label{eq:riem-cond}
\begin{align}
\mathcal H(U,U; \vec n^e) = \mat F(U) \, \vec n^e \qquad&\mbox{(consistency)}, \\
\mathcal H(U,V;\vec n^e) + \mathcal H(V,U; -\vec n^e)= 0 \qquad&\mbox{(conservation)}.
\end{align}
\end{subequations}
In our numerical experiments, we use the local Lax-Friedrichs flux
\begin{align}\label{eq:LaxFriedrichs}
\mathcal H(U,V;\vec n^e) = \frac{1}{2}\l \mat F(U) + \mat F(V) \r 
\vec n^e + \frac{\lambda}{2} (U - V),
\end{align}
where $\lambda$ is an estimate on the speed of fastest wave propagating along $\vec n^e$
\begin{align}\label{eq:wave-speed}
\lambda=\lambda(U,V;\vec n^e) \geq \;& \max_{\omega \in [0,1]} \mathrm{spr}\l \mat A(\omega U + (1-\omega) V;\vec n^e) \r \\
\mat A(U;\vec n^e) =\;& \frac{\partial}{\partial U} \l \mat F(U) \, \vec n^e \r \in \R^{m\times m}, \notag
\end{align}
and $\mathrm{spr}(\cdot)$ denotes the spectral radius of a matrix.
For scalar conservation laws having a convex flux $\mat F$, the maximum in \eqref{eq:wave-speed} is attained at one of the two states (either $U$ or $V$).
Practical upper bounds for more general problems such as the Euler and shallow-water equations can be found in \cite{guermond2016a,guermond2018a}.

\begin{remark}
Our methodology is in no way restricted to using the Lax-Friedrichs flux in the target scheme.
In fact, we show that any flux satisfying \eqref{eq:riem-cond} fits into our framework.
However, the low order method presented in the next section, does indeed rely on a Lax-Friedrichs flux.
\end{remark}
Setting one component of $V_h$ equal to $\phi_i^e$ and the other components equal to zero, we test \eqref{eq:int-parts} with $V_h$ and arrive at the semi-discrete system of equations for the target scheme
\begin{align}\label{eq:semi-disc-dg}
\int_{K^e} \phi_i^e \, \frac{\partial U_h^e}{\partial t} \dx
= \int_{K^e} \mat F(U_h^e) \, \nabla \phi_i^e \dx
- \sum_{k=1}^{n_f} \int_{\Gamma_k^e} \phi_i^e \, \mathcal H(U_h^e,\hat U_h^e; \vec n^e)\ds,
\end{align}
where $i \in \{1,\dots,N\}$ and $e \in \{1,\dots,E\}$.


\section{Derivation of the low order method}\label{sec:low}

The standard DG discretization presented in \cref{sec:dg} is valid for any basis of $\mathbb V_p(\mathcal T_h)$.
However, the approach presented in this section relies on the use of Bernstein (also called B\'ezier) polynomials $b_i^p$. 
On $[0,1]$ these functions are defined as
\begin{align*}
b_i^p(x) = \bin{i}{p} (1-x)^{p-i} x^i, \quad i \in \{0,\dots,p\}.
\end{align*}
Definitions for other reference elements and further properties can be found in the literature, e.g., \cite{abgrall2010,ainsworth2011,kirby2011,lohmann2017}.
Bernstein polynomials possess a number of advantages compared to other bases.
In particular, they attain values in $[0,1]$ and form a partition of unity.
Therefore, a numerical approximation defined as a linear combination of Bernstein polynomials is bounded in the following way:
\begin{align*}
\min_{j\in \{1,\dots,N\}} u_j^e \leq \sum_{j=1}^N u_j^e \varphi_j^e(\vec x) \leq \max_{j\in \{1,\dots,N\}} u_j^e.
\end{align*}
The latter property makes the Bernstein basis a natural choice for constraining numerical solutions to satisfy discrete maximum principles which do not necessarily hold, e.g., for an interpolatory Lagrange basis even if the coefficients of the finite element approximation are bounded as desired.
Similarly to Lagrange basis polynomials, their Bernstein counterparts can be associated with certain \textit{nodes} $\vec x_i^e \in K^e$, which are uniformly distributed within the elements.
Due to the DG setting, we need to distinguish between element-local nodes $\vec x_i^e$ and physical locations $\vec x_I \in \lc \vec x_i^e \in \Omega : e \in \{1,\dots,E\},~i \in \{1,\dots,N\} \rc$ of these points, which are assigned unique global numbers $I$ and may be shared by boundary nodes of adjacent elements.

The semi-discrete formulation presented in \cref{sec:dg} satisfies the IDP property only when piecewise constant basis functions are employed.
To obtain a low order method that is IDP in general, we make a number of modifications to \eqref{eq:semi-disc-dg}.
A first simplification due to the use of Bernstein polynomials is that the boundary integral reduces to integration over the set of faces $\Gamma_k^e,\ k\in\mathcal F_i$ which contain the corresponding node.
Reversing integration by parts, we write \eqref{eq:semi-disc-dg} in the equivalent form
\begin{align*}
\int_{K^e} \phi_i^e \, \frac{\partial U_h^e}{\partial t} \dx
=\;& -\int_{K^e} \phi_i^e\, \nabla \cdot \mat F(U_h^e) \dx \\ &
- \sum_{k \in \mathcal F_i} \int_{\Gamma_k^e} \phi_i^e \lb \mathcal H(U_h^e,\hat U_h^e; \vec n^e) - \mat F(U_h^e)\,\vec n^e \rb \ds.
\end{align*}
Next, we linearize the volume term using the \textit{group finite element formulation}
\begin{align*}
\mat F_j^e \coloneqq \mat F(u_j^e),\quad j \in \{1,\dots,N\}, \quad \mat F_h^e \coloneqq \sum_{j=1}^N \mat F_j^e \,\phi_j^e \approx \mat F(U_h^e) = \mat F\l \sum_{j=1}^N u_j^e \phi_j^e\r.
\end{align*}
This approximation produces
\begin{align*}
& \sum_{j=1}^N m_{ij}^e \frac{\d u_j^e}{\d t}
=\; -\sum_{j=1}^N \mat F_j^e \, \vec c_{ij}^e
- \sum_{k \in \mathcal F_i} \int_{\Gamma_k^e} \phi_i^e \lb \mathcal H(U_h^e,\hat U_h^e; \vec n^e) - \mat F(U_h^e)\,\vec n^e \rb \ds, \\&
\mbox{where} \quad m_{ij}^e = \int_{K^e} \phi_i^e\,\phi_j^e\dx, \quad
\vec c_{ij}^e = \int_{K^e} \phi_i^e \nabla \phi_j^e\dx.
\end{align*}
Since Bernstein polynomials form a partition of unity, the matrices $\vec C^e =(C_1^e,\dots,C_d^e)^T= (\vec c_{ij}^e)_{i,j=1}^N$ have zero row sums, which implies $\sum_{j=1}^N \mat F_j^e \, \vec c_{ij}^e = \sum_{j=1}^N \l \mat F_j^e - \mat F_i^e \r \, \vec c_{ij}^e$.

The next step toward constructing an IDP low order method is mass lumping.
At this stage, the consistent mass matrix $M_C^e= (m_{ij}^e)_{i,j=1}^N$ is transformed into its lumped counterpart $M_L^e = \diag(m_i^e)_{i=1}^N$ by applying the \textit{preconditioning operator} $P^e = M_L^e(M_C^e)^{-1}$.
Following the approach proposed in \cite{kuzmin2020a,lohmann2017}, we apply $P^e$ to the element matrices stored in $\vec C^e$ as well, which yields
\begin{align}\label{eq:derive-low1}
m_i^e \frac{\d u_i^e}{\d t} =\;& -\sum_{j\in \tilde{\mathcal N}_i} \l \mat F_j^e - \mat F_i^e \r \tilde{\vec c}_{ij}^e 
-\sum_{k \in \mathcal F_i} \int_{\Gamma_k^e} \phi_i^e \lb \mathcal H(U_h^e,\hat U_h^e; \vec n^e) - \mat F(U_h^e)\,\vec n^e \rb \ds,  \\
\mbox{where}&\quad \tilde{\vec C}^e = (\tilde C_1^e,\dots,\tilde C_d^e)^T = \l\tilde{\vec c}_{ij}^e\r_{i,j=1}^N,
\quad \tilde C_k^e = M_L^e(M_C^e)^{-1}C_k^e.\notag
\end{align}
One can show that the element matrices $\tilde{C}_k^e$ also have zero row sums and their entries are non-zero only if nodes $\vec x_i^e$ and $\vec x_j^e$ are nearest neighbors within the element \cite{kuzmin2020a,lohmann2017}.
This fact allows us to restrict summation to the smaller index set $\tilde{\mathcal N}_i\subset \{1,\dots,N\}$ consisting of only the indices $j$ whose nodes are nearest neighbors to node $\vec x_i^e$.

\begin{remark}\label{rem:ill}
For high order elements, consistent mass matrices $M_C^e$ become extremely ill-conditioned and, therefore, one should not rely on their inversion to compute $\tilde C_k^e$.
In the appendix, we present formulas for the entries of $\tilde C_k^e$ on box elements, as well as a MATLAB code for computing $\tilde C_k^e$ on simplices.
\end{remark}

Next, we introduce dissipative matrices $D^e=(d_{ij}^e)_{i,j=1}^N$ defined by \cite{guermond2016,kuzmin2020a,kuzmin2002}
\begin{align}\label{eq:dMat}
&d_{ij}^e = \begin{cases}
\max\{\left|\tilde{\vec c}_{ij}^e\right| \lambda_{ij}^e,\left|\tilde{\vec c}_{ji}^e\right| \lambda_{ji}^e\} & \mbox{if } i\neq j, \\
-\sum_{k=1}^N d_{ik}^e & \mbox{otherwise},
\end{cases} \\
&\mbox{where} \quad \lambda_{ij}^e = \lambda( u_i^e, u_j^e; \vec n_{ij}^e), 
\quad \vec n_{ij}^e = \frac{\tilde{\vec c}_{ij}^e}{\left|\tilde{\vec c}_{ij}^e \right|}. \notag
\end{align}
To stabilize the volume integral, the term $\sum_{j\in\tilde{\mathcal N}_i} d_{ij}^e ( u_j^e - u_i^e)$ is added to \eqref{eq:derive-low1}.
Note that summation is over the compact stencil $\tilde{\mathcal N}_i$.
The sparsity of $D^e$ follows from \eqref{eq:dMat} and is the main advantage of applying the preconditioner $P^e$ to the element matrices of the discrete gradient operator.
A sparsified diffusion operator implies that $d_{ij}^e ( u_j^e - u_i^e)=0$ if $i$ and $j$ are far away nodes within a high order element.
Diffusive fluxes between such nodes are nonphysical and tend to be large.
The compact-stencil version produces nonvanishing fluxes only for pairs of nearest neighbor nodes.
The necessity of such sparsification for high order spaces has been recognized in \cite{hajduk2020a,kuzmin2020a,lohmann2017,pazner-preprint}.

\begin{remark}\label{rem:cross}
The analysis in the appendix of \cite{kuzmin2020a} proves the {\bf all}-nearest-neighbors sparsity pattern for box elements in more than one dimension.
In fact, the element matrices $C_k^e$ have more zero entries than this analysis suggests.
We prove in the appendix of this paper, that nodes which are closest neighbors in the diagonal direction, produce a zero entry in $\tilde C_k^e$.
On the one hand, this remarkable property implies lower computational cost since fewer pairs of nodes need to be considered (e.g., $2d$ neighbors for one interior node of an element instead of $3^d-1$). 
On the other hand, the low order method bears an interesting resemblance to the spectral element method in \cite{pazner-preprint}:
Dissipation is only added in Cartesian directions, not along diagonals.
\end{remark}

Having discussed the stabilization of volume terms, let us now modify the DG face integrals in order to obtain the low order IDP scheme.
To this end, we generalize our interface treatment for the advection equation \cite{hajduk2020a,hajduk2020b}.
In the 1D case, face contributions to the global system matrix already have the right signs and do not inhibit the IDP property.
Due to positivity of Bernstein polynomials and the use of upwind (Lax-Friedrichs) fluxes, off-diagonal entries corresponding to the 1D face terms (i.e. contributions from other elements) are nonnegative, while diagonal entries are nonpositive.
In multiple dimensions, the off-diagonal entries within the diagonal blocks of the matrix need to be ``lumped'' in a conservative manner.
This \textit{flux lumping} is accomplished by replacing interior and exterior contributions $U_h^e,\hat U_h^e$ by their nodal counterparts, i.e., solution coefficients $u_i^e,\hat u_i^e$, which yields
\begin{align*}
m_i^e\frac{\d u_i^e}{\d t} =\;& \sum_{j\in \tilde{\mathcal N}_i} \lb d_{ij}^e (u_j^e - u_i^e) - \l \mat F_j^e - \mat F_i^e \r \tilde{\vec c}_{ij}^e \rb \\&
- \sum_{k \in \mathcal F_i}  \int_{\Gamma_k^e} \phi_i^e \lb \mathcal H(u_i^e,\hat u_i^e; \vec n^e) - \mat F(u_i^e)\,\vec n^e \rb \ds.
\end{align*}
Note that for nodes $i$ located at a junction of two or more interfaces (e.g. a vertex in 2D) $\hat u_i^e$ implicitly depends on the index $k$ of the face (in 2D, there are two such neighbors, one for each face).
At this stage, the use of Lax-Friedrichs flux \eqref{eq:LaxFriedrichs} is beneficial since it allows us to write the face term as 
\begin{align*}
\mathcal H(u_i^e,\hat u_i^e; \vec n^e) - \mat F(u_i^e)\,\vec n^e =\,&
\frac{1}{2} \lb \l \mat F(\hat u_i^e) - \mat F(u_i^e) \r \vec n^e + \lambda_i^e\,(u_i^e - \hat u_i^e)\rb \\=\;&
\frac{1}{2} \lb \big( \hat{\mat F}_i^e - \mat F_i^e \big)\, \vec n^e + \lambda_i^e\,(u_i^e - \hat u_i^e)\rb,
\end{align*}
where we introduced the notation $\hat{\mat F}_i^e \coloneqq \mat F(\hat u_i^e)$, and the nodal wave speed $\lambda_i^e = \lambda(u_i^e,\hat u_i^e; \vec n^e)$.
In summary, the low order scheme reads
\begin{align}\label{eq:loworder}
m_i^e\frac{\d u_i^e}{\d t} = \;& \sum_{j\in \tilde{\mathcal N}_i} \lb d_{ij}^e (u_j^e - u_i^e) - \l \mat F_j^e - \mat F_i^e \r \tilde{\vec c}_{ij}^e \rb 
\notag \\& + \sum_{k \in \mathcal F_i} \frac 1 2\int_{\Gamma_k^e} \phi_i^e\ds \lb \big( \mat F_i^e - \hat{\mat F}_i^e \big)\, \vec n^e + \lambda_i^e\,(\hat u_i^e - u_i^e) \rb.
\end{align}
Equation \eqref{eq:loworder} constitutes the semi-discrete form of our low order method.
Its IDP property follows from its equivalent \textit{bar state} form \cite{guermond2016}.
Similarly to volume integrals that can be expressed in terms of two-node bar states $\bar u_{ij}^e$, DG fluxes between nodes corresponding to the same physical location in two different elements can be written in terms of interfacial bar states $\bar u_{i,k}^e, k \in \mathcal F_i$.
The volume and face bar states of our DG method are defined by
\begin{align}\label{eq:barstates}
\bar u_{ij}^e = \frac{u_i^e+u_j^e}{2} - \frac{\l\mat F_j^e - \mat F_i^e\r \tilde{\vec c}_{ij}^e}{2 d_{ij}^e}, \qquad
\bar u_{i,k}^e = \frac{u_i^e + \hat u_i^e}{2} -
\frac{\big(\hat{\mat F}_i^e - \mat F_i^e\big)\, \vec n^e}{2 \lambda_i^e},
\end{align}
respectively.
Writing \eqref{eq:loworder} in terms of \eqref{eq:barstates} produces
\begin{align}\label{eq:barstateform}
&m_i^e\frac{\d u_i^e}{\d t} =  \sum_{j\in \tilde{\mathcal N}_i} 2d_{ij}^e \l \bar u_{ij}^e - u_i^e \r + \sum_{k \in \mathcal F_i}
2 d_{i,k}^e \l \bar u_{i,k}^e - u_i^e \r, \\
&\mbox{where}\quad d_{i,k}^e = \frac{\lambda_i^e}{2} \int_{\Gamma_k^e} \phi_i^e\ds\notag
\end{align}
and $d_{i,k}^e> 0$ due to the positivity of the Bernstein basis.
\begin{remark}
By definition, the volumetric  bar states $\bar u_{ij}^e$ and $\bar u_{ji}^e$ coincide if 
$\tilde{\vec c}_{ij}^e+\tilde{\vec c}_{ji}^e=0$.
This skew-symmetry condition
holds if (i) no preconditioning is performed for the element
matrices ${\vec C}^e$
of the discrete gradient operator and (ii)  at least one of the two
nodes is located in the interior of $K^e$.
The sparse preconditioned
element matrices $\tilde{\vec C}^e$ are not skew-symmetric.
Therefore, $\bar u_{ij}^e$ and $\bar u_{ji}^e$
may differ even if $i$ or $j$ is an internal node.
However, an interfacial bar state $\bar u_{i,k}^e$ always equals the
bar state $\bar u_{i^\prime,k^\prime}^{e^\prime}$ of the node $\mathbf{x}_{i^\prime}^{e^\prime}$
occupying the same physical location $\mathbf{x}_I$
in an adjacent element $K^{e^\prime}$.
Indeed, we have
\begin{align}\label{eq:fluxbarstateeq}
\bar u_{i^\prime,k^\prime}^{e^\prime} = \frac{u_{i^\prime}^{e^\prime} + \hat u_{i^\prime}^{e^\prime}}{2}
- \frac{\big( \hat{\mat F}_{i^\prime}^{e^\prime} - \mat F_{i^\prime}^{e^\prime} \big) \, \vec n^{e^\prime}}{2 \lambda_{i^\prime}^{e^\prime}} = 
\frac{\hat u_i^e + u_i^e}{2} - \frac{\big(\hat{\mat F}_i^e - \mat F_i^e\big)\, \vec n^e}{2 \lambda_i^e} = \bar u_{i,k}^e.
\end{align}
\end{remark}
In \cite{guermond2016}, the authors prove that $\bar u_{ij}^e$ is in the invariant set $\mathcal G$.
Following their proof, we now show that the same property holds for $\bar u_{i,k}^e$.
\begin{theorem}
Assume that the one-dimensional Riemann problem 
\begin{align}\label{eq:riemann} 
\frac{\partial U}{\partial t} + \frac{\partial}{\partial x} \l \mat F(U)\, \vec n^e \r = 0, \quad \mbox{in } \R \times \R_+, \qquad U(x,0) =
\begin{cases}
u_i^e & \mbox{if } x < 0, \\ \hat u_i^e & \mbox{if } x > 0
\end{cases}
\end{align}
has a unique solution $U(\vec n^e, u_i^e, \hat u_i^e)$.
Then the flux bar states $\bar u_{i,k}^e$ defined by \eqref{eq:barstates} belong to the invariant set $\mathcal G$.
\end{theorem}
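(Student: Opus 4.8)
The plan is to transcribe the space--time control-volume argument that Guermond and Popov \cite{guermond2016} use to prove $\bar u_{ij}^e\in\mathcal G$ into the interfacial setting. The guiding idea is that $\bar u_{i,k}^e$ is nothing but the spatial mean of the exact Riemann solution $U(\vec n^e,u_i^e,\hat u_i^e)$ at a conveniently chosen time; once this is established, membership in $\mathcal G$ is immediate from convexity.

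First I would invoke finite speed of propagation. By the definition \eqref{eq:wave-speed} of the wave speed, $\lambda_i^e=\lambda(u_i^e,\hat u_i^e;\vec n^e)$ dominates the speed of every wave in the self-similar solution $U$ of \eqref{eq:riemann}. Consequently $U(x,t)=u_i^e$ whenever $x\le-\lambda_i^e t$ and $U(x,t)=\hat u_i^e$ whenever $x\ge\lambda_i^e t$, for all $t>0$. (Here I tacitly assume $\lambda_i^e>0$, consistent with the strict positivity of $d_{i,k}^e$ in \eqref{eq:barstateform}; the degenerate case $\lambda_i^e=0$ is trivial since the flux jump $(\hat{\mat F}_i^e-\mat F_i^e)\vec n^e$ then vanishes as well.)

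Second, I would integrate the conservation law in \eqref{eq:riemann} over the rectangle $(-\lambda_i^e T,\lambda_i^e T)\times(0,T)$ for an arbitrary $T>0$ and apply the divergence theorem. On the vertical sides $x=\pm\lambda_i^e T$ the solution equals $u_i^e$, resp.\ $\hat u_i^e$, for every $t\in(0,T)$ by the previous step, so the corresponding flux integrals evaluate exactly to $T\,\mat F_i^e\vec n^e$ and $T\,\hat{\mat F}_i^e\vec n^e$; the integral over the bottom edge equals $\lambda_i^e T\,(u_i^e+\hat u_i^e)$. Dividing the resulting identity by $2\lambda_i^e T$ gives
\[
\frac{1}{2\lambda_i^e T}\int_{-\lambda_i^e T}^{\lambda_i^e T} U(x,T)\dx
= \frac{u_i^e+\hat u_i^e}{2} - \frac{\big(\hat{\mat F}_i^e-\mat F_i^e\big)\vec n^e}{2\lambda_i^e}
= \bar u_{i,k}^e ,
\]
so $\bar u_{i,k}^e$ is the mean value of $U(\cdot,T)$ over the interval $(-\lambda_i^e T,\lambda_i^e T)$.

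Finally, since $u_i^e,\hat u_i^e\in\mathcal G$ and $\mathcal G$ is, by assumption, an invariant set, the unique (entropy) solution of \eqref{eq:riemann} satisfies $U(x,T)\in\mathcal G$ for a.e.\ $x$; because $\mathcal G$ is convex (and closed), the average of $U(\cdot,T)$ over any interval again lies in $\mathcal G$, which yields $\bar u_{i,k}^e\in\mathcal G$. The steps that require the most care are the rigorous justification of the finite-speed claim and of the boundary-flux bookkeeping for the possibly discontinuous self-similar solution, and the standing hypothesis $\lambda_i^e>0$ together with closedness and convexity of $\mathcal G$, without which the averaging step is not legitimate; the rest is a verbatim adaptation of the argument for the volumetric bar states in \cite{guermond2016}.
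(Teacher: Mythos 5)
Your proposal is correct and is essentially the paper's own argument (both transcribe the Guermond--Popov control-volume computation): the paper simply fixes the ``fake time'' $\tau=1/(2\lambda_i^e)$ so that the averaging interval $[-\tfrac12,\tfrac12]$ has unit length, whereas you keep a general $T$ and divide by $2\lambda_i^e T$, which is the same identity. Your version merely spells out the finite-speed-of-propagation and divergence-theorem bookkeeping that the paper delegates to the cited reference.
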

\begin{proof}
Consider the \textit{fake time} $\tau = \left| \vec n^e \right| / (2 \lambda_i^e) = 1 / (2\lambda_i^e)$.
By definition \eqref{eq:wave-speed}, $\lambda_i^e$ is a guaranteed upper bound for the maximum wave speed $\lambda_{\max}$ of \eqref{eq:riemann}, i.e., $\tau\lambda_{\max} \leq \frac{1}{2}.$
Hence, following the analysis of the 1D Riemann problem in
\cite{guermond2016},
we can rewrite the averaged solution to \eqref{eq:riemann} as
\begin{align*}
\bar U(\vec n^e, u_i^e, \hat u_i^e) \coloneqq \int_{-\frac{1}{2}}^{\frac{1}{2}}  U(\vec n^e, u_i^e, \hat u_i^e)(x,\tau)\,\d x = \frac{u_i^e + \hat u_i^e}{2} 
- \frac{\big( \hat{\mat F}_i^e - \mat F_i^e \big)\, \vec n^e}{2 \lambda_i^e} = \bar u_{i,k}^e.
\end{align*}
Since $\bar U(\vec n^e, u_i^e, \hat u_i^e) \in \mathcal G$, this representation of $\bar u_{i,k}^e$ implies the statement of the theorem.
\end{proof}

The forward Euler time discretization of \eqref{eq:barstateform}
preserves the IDP property if the time step $\Delta t$ is sufficiently small.
The updated solution
\begin{align}\label{eq:euler}
\tilde u_i^e = u_i^e - \frac{2\Delta t}{m_i^e} \l \sum_{j\in \tilde{\mathcal N}_i} d_{ij}^e + \sum_{k \in \mathcal F_i} d_{i,k}^e \r u_i^e
+ \frac{2\Delta t}{m_i^e} \l \sum_{j\in \tilde{\mathcal N}_i} d_{ij}^e \bar u_{ij}^e  + \sum_{k \in \mathcal F_i} d_{i,k}^e \bar u_{i,k}^e \r
\end{align}
is a convex combination of $u_i^e$ and the bar states $\bar u_{ij}^e,~\bar u_{i,k}^e$, provided that 
\begin{align}\label{eq:cfl}
\frac{2\Delta t}{m_i^e} \l \sum_{j\in \tilde{\mathcal N}_i} d_{ij}^e + \sum_{k \in \mathcal F_i} d_{i,k}^e \r \leq 1.
\end{align}
\begin{corollary}
If \eqref{eq:cfl} holds, then $\tilde u_i^e$ is a convex combination of $u_i^e$ and the bar states, which are members of $\mathcal G$.
This proves the IDP property of \eqref{eq:loworder}.
\end{corollary}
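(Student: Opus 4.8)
The claim is essentially a bookkeeping consequence of the explicit update \eqref{eq:euler} together with the invariant-domain results already established, so the plan is to make the convex-combination structure precise and then invoke convexity of $\mathcal G$. First I would abbreviate $\sigma_i \coloneqq \frac{2\Delta t}{m_i^e}\bigl(\sum_{j\in\tilde{\mathcal N}_i} d_{ij}^e + \sum_{k\in\mathcal F_i} d_{i,k}^e\bigr)$ and rewrite \eqref{eq:euler} as
\begin{align*}
\tilde u_i^e = (1-\sigma_i)\,u_i^e + \sum_{j\in\tilde{\mathcal N}_i} \frac{2\Delta t\,d_{ij}^e}{m_i^e}\,\bar u_{ij}^e + \sum_{k\in\mathcal F_i} \frac{2\Delta t\,d_{i,k}^e}{m_i^e}\,\bar u_{i,k}^e .
\end{align*}
By construction the coefficients of $u_i^e$, of the $\bar u_{ij}^e$ and of the $\bar u_{i,k}^e$ add up to one, so the only thing left to check is that each of them is nonnegative.

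For the nonnegativity I would argue entry by entry. The lumped mass $m_i^e = \int_{K^e}\phi_i^e\,\d x$ is strictly positive because the Bernstein basis is nonnegative and $\phi_i^e\not\equiv 0$; the interfacial coefficient $d_{i,k}^e = \frac{\lambda_i^e}{2}\int_{\Gamma_k^e}\phi_i^e\,\d s$ is positive for the same reason (using $\lambda_i^e>0$), as already noted below \eqref{eq:barstateform}; and for $i\neq j$ the entry $d_{ij}^e$ is a maximum of the nonnegative quantities $|\tilde{\vec c}_{ij}^e|\lambda_{ij}^e$ and $|\tilde{\vec c}_{ji}^e|\lambda_{ji}^e$ by \eqref{eq:dMat}, hence $d_{ij}^e\geq 0$. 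Consequently every off-diagonal weight in the rewritten \eqref{eq:euler} is $\geq 0$, and the weight $1-\sigma_i$ of $u_i^e$ is $\geq 0$ precisely because of the CFL condition \eqref{eq:cfl}. Thus $\tilde u_i^e$ is a genuine convex combination of $u_i^e$, the volumetric bar states $\bar u_{ij}^e$ and the interfacial bar states $\bar u_{i,k}^e$.

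To conclude I would combine this with the invariant-set properties recalled above. Assuming all nodal states at the current time level lie in $\mathcal G$ — which holds at $t=0$ since $U_0\in\mathcal G$ and the Bernstein coefficients of the initial approximation inherit the bounds, and which also covers the exterior values $\hat u_i^e$ and any weakly imposed boundary data — the result of Guermond and Popov \cite{guermond2016} gives $\bar u_{ij}^e\in\mathcal G$, and the theorem just proved gives $\bar u_{i,k}^e\in\mathcal G$. Convexity of $\mathcal G$ then forces $\tilde u_i^e\in\mathcal G$, so one forward-Euler step preserves $\mathcal G$; since strong-stability-preserving time integrators are convex combinations of such steps, the fully discrete scheme is invariant-domain preserving, i.e. \eqref{eq:loworder} is IDP in the usual semi-discrete sense. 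I do not anticipate a real obstacle here: the statement is a corollary of \eqref{eq:euler} and the preceding theorem, and the only points needing care are making the signs of $d_{ij}^e$, $m_i^e$, $d_{i,k}^e$ explicit (they rest on positivity of the Bernstein basis and on $\lambda_{ij}^e,\lambda_i^e\geq 0$) and the implicit hypothesis that every state feeding a bar state already belongs to $\mathcal G$.
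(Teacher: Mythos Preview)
Your proposal is correct and matches the paper's intended argument; in fact the paper does not spell out a proof of this corollary at all, treating it as immediate from the form of \eqref{eq:euler}, the sign properties of $m_i^e$, $d_{ij}^e$, $d_{i,k}^e$ noted just above, and the CFL bound \eqref{eq:cfl}. You have simply made explicit the convex-combination bookkeeping and the appeal to convexity of $\mathcal G$ that the paper leaves to the reader.
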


\begin{remark}
It is possible to write \eqref{eq:barstateform} using a unified notation for volumetric and interfacial bar states.
Disguising the different nature of underlying approximations, the resulting representation resembles the bar state forms of the low order continuous finite element methods considered in \cite{guermond2016,kuzmin2020a}.
Let $\mathcal E^e$ denote the DG element stencil, i.e., the integer set containing the numbers of element $K^e$ and its common face neighbors.
Furthermore, define
\begin{align*}
\mathcal N_i^{e,e^\prime} = \begin{cases}
\tilde {\mathcal N}_i & \mbox{if } e=e^\prime, \\
\lc j \in \{1,\dots,N\}:~ \vec x_j^{e^\prime} = \vec x_i^e \rc & \mbox{otherwise},
\end{cases}
\end{align*}
and
\begin{align*}
d_{ij}^{e,e^\prime} = \begin{cases}
d_{ij}^e & \mbox{if } e=e^\prime, \\
d_{i,k}^e & \Gamma_k^e = K^e\cap K^{e^\prime},
\end{cases} \qquad
\bar u_{ij}^{e,e^\prime} = \begin{cases}
\bar u_{ij}^e & \mbox{if } e=e^\prime, \\
\bar u_{i,k}^e & \Gamma_k^e = K^e\cap K^{e^\prime}.
\end{cases}
\end{align*}
Then \eqref{eq:barstateform} becomes
\begin{align}\label{eq:alternative}
m_i^e\frac{\d u_i^e}{\d t} = \sum_{e^\prime \in \mathcal E^e} \sum_{j\in \mathcal N_i^{e,e^\prime}} 2d_{ij}^{e,e^\prime} \l \bar u_{ij}^{e,e^\prime} - u_i^e \r.
\end{align}
Since the volume and flux terms are treated differently in practice, we avoid using the compressed single-sum representation \eqref{eq:alternative}. However, it is quite remarkable that the bar state form of the low order IDP method analyzed in \cite{guermond2016} can be generalized to the DG setting in such a way.
\end{remark}

\begin{remark}
Condition \eqref{eq:cfl} provides a valuable property for time step control.
If the solution of \eqref{eq:barstateform} is advanced in time using the
forward Euler method, the IDP property is guaranteed for time steps
$\Delta t$ satisfying
\begin{align}\label{eq:dt}
\Delta t \leq \min_{e \in \{1,\dots,E\}} \min_{i \in \{1,\dots,N\}} \frac{m_i^e}{2 \l \sum_{j\in \tilde{\mathcal N}_i} d_{ij}^e + \sum_{k \in \mathcal F_i} d_{i,k}^e \r}.
\end{align}
In higher order strong stability preserving (SSP) Runge-Kutta methods \cite{gottlieb2001}, each stage corresponds to an update of the form \eqref{eq:euler}. Hence the validity of \eqref{eq:dt} is sufficient to ensure the IDP property for each stage and for the final result which represents a convex combination of IDP approximations.
\end{remark}

\section{Monolithic limiting approach}\label{sec:lim}

This section describes all mathematical aspects of our limiter starting with the definition of raw antidiffusive fluxes.
It then summarizes the subcell limiting approach of \cite{kuzmin2020} and extends it to the interfacial bar states of flux-corrected DG approximations.
Next, we describe the general methodology for limiting hyperbolic systems and define admissible bounds for the inequality constraints of each step.
For the reader's convenience, we give a brief summary of the presented limiting techniques at the end of this section.

\subsection{Raw antidiffusive fluxes}\label{sec:raw}
In order to adapt the convex limiting strategy originally proposed in \cite{kuzmin2020,kuzmin2020a} to the DG setting, we must first define raw antidiffusive terms $f_i^e,\, f_{i,k}^e$ which transform the low order method
\eqref{eq:loworder} into the compact-stencil
form 
\begin{align}\label{eq:unlimited}
m_i^e\frac{\d u_i^e}{\d t} = \;& \sum_{j\in \tilde{\mathcal N}_i} \lb d_{ij}^e (u_j^e - u_i^e) - \l \mat F_j^e - \mat F_i^e \r \tilde{\vec c}_{ij}^e \rb+f_i^e
\notag \\& + \sum_{k \in \mathcal F_i} \l \frac 1 2\int_{\Gamma_k^e} \phi_i^e\ds \lb \big( \mat F_i^e - \hat{\mat F}_i^e \big)\, \vec n^e + \lambda_i^e\,(\hat u_i^e - u_i^e) \rb + f_{i,k}^e \r
\end{align}
of the target scheme \eqref{eq:semi-disc-dg}. These correction terms
are given by
\begin{align}
f_i^e =\;& \sum_{j=1}^N \l m_i^e \delta_{ij} - m_{ij}^e\r \dot u_j^e - \sum_{j\in \tilde{\mathcal N}_i} \lb d_{ij}^e (u_j^e - u_i^e) - \l \mat F_j^e - \mat F_i^e \r \tilde{\vec c}_{ij}^e \rb \notag \\&
+ \int_{K^e} \mat F(U_h^e) \, \nabla \phi_i^e \dx 
- \sum_{k \in \mathcal F_i} \int_{\Gamma_k^e} \phi_i^e\, \mat F_i^e\, \vec n^e \ds \label{eq:anti-diff-el}
\end{align}
and
\begin{align}
f_{i,k}^e =\;& \int_{\Gamma_k^e} \phi_i^e \lb \mathcal H\l u_i^e,\hat u_i^e; \vec n^e\r - \mathcal H\l U_h^e,\hat U_h^e; \vec n^e\r \rb \ds.
\label{eq:anti-diff-flux}
\end{align}
The vector $\dot u^e = (\dot u_j^e)_{j=1}^N$ of nodal time derivatives in element $K^e$ is obtained by solving \eqref{eq:semi-disc-dg}.

\begin{lemma}
The semi-discrete target scheme \eqref{eq:semi-disc-dg} is equivalent to \eqref{eq:unlimited}.
\end{lemma}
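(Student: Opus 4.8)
The plan is to substitute the explicit expressions \eqref{eq:anti-diff-el} for $f_i^e$ and \eqref{eq:anti-diff-flux} for $f_{i,k}^e$ into the right-hand side of \eqref{eq:unlimited} and to check, by direct cancellation, that what remains is exactly the right-hand side of the target scheme \eqref{eq:semi-disc-dg}, with the lumped mass on the left replaced by the consistent one. Throughout one uses that $\dot u^e=(\dot u_j^e)_{j=1}^N$ is, by definition, the unique vector solving \eqref{eq:semi-disc-dg}, that is, $\sum_{j=1}^N m_{ij}^e\dot u_j^e=\int_{K^e}\mat F(U_h^e)\nabla\phi_i^e\dx-\sum_{k=1}^{n_f}\int_{\Gamma_k^e}\phi_i^e\,\mathcal H(U_h^e,\hat U_h^e;\vec n^e)\ds$ for every $i$.

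First I would cancel the volume stabilization. By \eqref{eq:anti-diff-el}, $f_i^e$ carries the term $-\sum_{j\in\tilde{\mathcal N}_i}\left[d_{ij}^e(u_j^e-u_i^e)-(\mat F_j^e-\mat F_i^e)\tilde{\vec c}_{ij}^e\right]$, which is the exact negative of the stabilized sparsified volume contribution printed in the first line of \eqref{eq:unlimited}; the two annihilate each other. What survives from the volume part is the mass-matrix defect $\sum_{j=1}^N(m_i^e\delta_{ij}-m_{ij}^e)\dot u_j^e$, the consistent volume integral $\int_{K^e}\mat F(U_h^e)\nabla\phi_i^e\dx$, and the auxiliary boundary term $-\sum_{k\in\mathcal F_i}\int_{\Gamma_k^e}\phi_i^e\,\mat F_i^e\vec n^e\ds$, all taken from \eqref{eq:anti-diff-el}.

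Next I would treat the face terms. By the Lax-Friedrichs identity established just above \eqref{eq:loworder}, $\mat F_i^e\vec n^e-\mathcal H(u_i^e,\hat u_i^e;\vec n^e)=\frac{1}{2}\left[(\mat F_i^e-\hat{\mat F}_i^e)\vec n^e+\lambda_i^e(\hat u_i^e-u_i^e)\right]$, so the low-order face contribution in the second line of \eqref{eq:unlimited} equals $\left(\int_{\Gamma_k^e}\phi_i^e\ds\right)\left[\mat F_i^e\vec n^e-\mathcal H(u_i^e,\hat u_i^e;\vec n^e)\right]$. Since $\mat F_i^e$ and the nodal flux $\mathcal H(u_i^e,\hat u_i^e;\vec n^e)$ are constant along $\Gamma_k^e$, adding $f_{i,k}^e$ from \eqref{eq:anti-diff-flux} cancels the nodal flux and leaves $\int_{\Gamma_k^e}\phi_i^e\,\mat F_i^e\vec n^e\ds-\int_{\Gamma_k^e}\phi_i^e\,\mathcal H(U_h^e,\hat U_h^e;\vec n^e)\ds$ for each $k\in\mathcal F_i$. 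Summing over $k\in\mathcal F_i$ and combining with the auxiliary term $-\sum_{k\in\mathcal F_i}\int_{\Gamma_k^e}\phi_i^e\,\mat F_i^e\vec n^e\ds$ carried over from the volume step, the $\mat F_i^e$ boundary contributions cancel and only $-\sum_{k\in\mathcal F_i}\int_{\Gamma_k^e}\phi_i^e\,\mathcal H(U_h^e,\hat U_h^e;\vec n^e)\ds$ remains; because $\phi_i^e$ vanishes on every face outside $\mathcal F_i$, the sum extends to all $k\in\{1,\dots,n_f\}$ without change.

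Collecting the surviving terms, the right-hand side of \eqref{eq:unlimited} becomes $m_i^e\dot u_i^e-\sum_{j=1}^N m_{ij}^e\dot u_j^e+\int_{K^e}\mat F(U_h^e)\nabla\phi_i^e\dx-\sum_{k=1}^{n_f}\int_{\Gamma_k^e}\phi_i^e\,\mathcal H(U_h^e,\hat U_h^e;\vec n^e)\ds$, and the last three terms sum to zero by the defining relation of $\dot u^e$, so the right-hand side collapses to $m_i^e\dot u_i^e$. Since the left-hand side of \eqref{eq:unlimited} is $m_i^e\,\frac{\d u_i^e}{\d t}$, the scheme \eqref{eq:unlimited} is equivalent to $\frac{\d u_i^e}{\d t}=\dot u_i^e$ for all $i$, which --- as $M_C^e$ is invertible and $\dot u^e$ is fixed by \eqref{eq:semi-disc-dg} --- is precisely \eqref{eq:semi-disc-dg}; reading the chain of identities in reverse yields the converse implication. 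The argument is entirely elementary; the one point needing care is the face-term bookkeeping, namely noticing that the nodal numerical fluxes appearing in the low-order face term and in $f_{i,k}^e$ are constant on each face and therefore cancel, that the spurious $\mat F_i^e$ boundary terms produced by the volume antidiffusion are removed exactly, and that restricting the face sum to $\mathcal F_i$ is harmless thanks to the support of the Bernstein shape functions.
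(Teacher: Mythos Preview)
Your proof is correct and follows essentially the same route as the paper: substitute \eqref{eq:anti-diff-el} and \eqref{eq:anti-diff-flux} into \eqref{eq:unlimited}, cancel the stabilized volume term against its negative in $f_i^e$, use the Lax--Friedrichs identity to combine the low-order face term with $f_{i,k}^e$, and let the auxiliary $\mat F_i^e\,\vec n^e$ boundary pieces cancel. The only cosmetic difference is that the paper moves the mass-matrix defect to the left-hand side at the outset (tacitly identifying $\dot u_j^e$ with $\tfrac{\d u_j^e}{\d t}$), whereas you keep $\dot u^e$ as the fixed solution of \eqref{eq:semi-disc-dg} throughout and conclude $\tfrac{\d u_i^e}{\d t}=\dot u_i^e$ at the end; your bookkeeping is arguably a bit cleaner, but the underlying computation is identical.
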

\begin{proof}
It is easy to verify that substitution of $f_i^e$ and $f_{i,k}^e$ into
\eqref{eq:unlimited}
reverses the effect of algebraic manipulations
performed in Section \ref{sec:low}.
Moving the time derivative term to the left hand side, we rewrite \eqref{eq:unlimited} as
\begin{align*}
&\sum_{j=1}^N m_{ij}^e \frac{\d u_j^e}{\d t} = 
\int_{K^e} \mat F(U_h^e) \, \nabla \phi_i^e \dx
- \sum_{k \in \mathcal F_i} \int_{\Gamma_k^e} \phi_i^e\, \mat F_i^e \, \vec n^e \ds \\&
+ \sum_{k \in \mathcal F_i} \l \frac 1 2\int_{\Gamma_k^e} \phi_i^e\ds \lb \big( \mat F_i^e - \hat{\mat F}_i^e \big)\, \vec n^e + \lambda_i^e\,(\hat u_i^e - u_i^e) \rb \r \\&
+ \sum_{k \in \mathcal F_i} \int_{\Gamma_k^e} \phi_i^e \lb \mathcal H\l u_i^e,\hat u_i^e; \vec n^e\r - \mathcal H\l U_h^e,\hat U_h^e; \vec n^e\r \rb \ds \\&
= \int_{K^e} \mat F(U_h^e) \, \nabla \phi_i^e \dx
- \sum_{k \in \mathcal F_i} \int_{\Gamma_k^e} \phi_i^e \, \mathcal H(U_h^e,\hat U_h^e; \vec n^e)\ds \\&
+ \sum_{k \in \mathcal F_i} \int_{\Gamma_k^e} \phi_i^e\ds
\underbrace{\l \mathcal H\l u_i^e,\hat u_i^e; \vec n^e\r
- \frac 1 2\lb \big( \mat F_i^e + \hat{\mat F}_i^e \big)\, \vec n^e + \lambda_i^e\,(u_i^e - \hat u_i^e) \rb \r}_{= 0}.
%
\end{align*}
This semi-discrete scheme is, indeed, equivalent to \eqref{eq:semi-disc-dg}.
\end{proof}
\begin{lemma}\label{lem:f-sum}
The antidiffusive fluxes $f_i^e$ defined by \eqref{eq:anti-diff-el} satisfy $\sum_{i=1}^N f_i^e = 0$ for all $e \in \{1,\dots,E\}$.
\end{lemma}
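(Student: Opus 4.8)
The plan is to sum the defining identity \eqref{eq:anti-diff-el} over $i\in\{1,\dots,N\}$ and argue that the four groups of terms cancel, either individually or in pairs. I would split $f_i^e$ into (i) the mass correction $\sum_{j}(m_i^e\delta_{ij}-m_{ij}^e)\dot u_j^e$, (ii) the low-order stencil term $-\sum_{j\in\tilde{\mathcal N}_i}[d_{ij}^e(u_j^e-u_i^e)-(\mat F_j^e-\mat F_i^e)\tilde{\vec c}_{ij}^e]$, (iii) the exact volume integral $\int_{K^e}\mat F(U_h^e)\,\nabla\phi_i^e\dx$, and (iv) the face integral $-\sum_{k\in\mathcal F_i}\int_{\Gamma_k^e}\phi_i^e\mat F_i^e\vec n^e\ds$. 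Three of these vanish by themselves after summation over $i$: for (i), $\sum_i\sum_j(m_i^e\delta_{ij}-m_{ij}^e)\dot u_j^e=\sum_j\big(m_j^e-\sum_i m_{ij}^e\big)\dot u_j^e=0$ since the lumped entry $m_j^e$ is the $j$th row sum and, by symmetry of $M_C^e$, also the $j$th column sum of the consistent mass matrix; for the diffusive part of (ii), the double sum $\sum_{i,j}d_{ij}^e(u_j^e-u_i^e)$ changes sign under relabelling $i\leftrightarrow j$ because $d_{ij}^e=d_{ji}^e$ by \eqref{eq:dMat} and the nearest-neighbour relation is symmetric, hence it equals zero; for (iii), $\sum_i\int_{K^e}\mat F(U_h^e)\,\nabla\phi_i^e\dx=\int_{K^e}\mat F(U_h^e)\,\nabla\big(\sum_i\phi_i^e\big)\dx=0$ by the partition of unity $\sum_i\phi_i^e\equiv1$.

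The work then reduces to matching the linearized volume part of (ii), namely $\sum_i\sum_{j\in\tilde{\mathcal N}_i}(\mat F_j^e-\mat F_i^e)\tilde{\vec c}_{ij}^e$, against (iv). Here I would use the zero row sums of $\tilde C_k^e$ (recalled after \eqref{eq:derive-low1}), together with the fact that off-stencil entries of $\tilde C_k^e$ vanish, to write $\sum_{j\in\tilde{\mathcal N}_i}(\mat F_j^e-\mat F_i^e)\tilde{\vec c}_{ij}^e=\sum_{j=1}^N\mat F_j^e\tilde{\vec c}_{ij}^e$, then swap the order of summation to obtain $\sum_j\mat F_j^e\big(\sum_i\tilde{\vec c}_{ij}^e\big)$, i.e.\ the flux contracted with the \emph{column} sums of $\tilde C_k^e$. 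Next I would show that the lumping preconditioner $P^e=M_L^e(M_C^e)^{-1}$ leaves column sums unchanged: $\mathbf 1^T\tilde C_k^e=\mathbf 1^T M_L^e(M_C^e)^{-1}C_k^e=\mathbf 1^T C_k^e$, because $\mathbf 1^T M_L^e=\mathbf 1^T M_C^e$ (both equal the row-sum vector of the symmetric $M_C^e$). Finally, the column sums of $C_k^e$ are DG face integrals: $\sum_i\vec c_{ij}^e=\int_{K^e}\big(\sum_i\phi_i^e\big)\nabla\phi_j^e\dx=\int_{K^e}\nabla\phi_j^e\dx=\int_{\partial K^e}\phi_j^e\vec n^e\ds=\sum_{k\in\mathcal F_j}\int_{\Gamma_k^e}\phi_j^e\vec n^e\ds$ by the componentwise divergence theorem, using that $\phi_j^e$ has vanishing trace on faces not containing $\vec x_j^e$. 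Putting these together, the surviving part of (ii) equals $\sum_j\sum_{k\in\mathcal F_j}\int_{\Gamma_k^e}\phi_j^e\mat F_j^e\vec n^e\ds$, which is exactly the negative of the summed term (iv), so the total is zero.

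The hard part is the second paragraph: spotting that the leftover linearized volume term is a column-sum contraction, proving that $P^e$ preserves column sums, and identifying those column sums with the DG face integrals via the divergence theorem so that they annihilate the face term of \eqref{eq:anti-diff-el}. The three vanishing-by-symmetry arguments are routine; the one genuine pitfall is tracking the two nested minus signs in front of and inside the $\tilde{\mathcal N}_i$-sum together with the minus sign in front of the face integral, so that the last step is a clean cancellation rather than an accidental doubling of the term.
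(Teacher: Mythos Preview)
Your proof is correct and follows essentially the same route as the paper: dismissing the mass and diffusion contributions by symmetry, then showing that the linearized volume term cancels the face term via the column-sum preservation property $\mathbf 1^T\tilde C_k^e=\mathbf 1^T C_k^e$ (which the paper cites from earlier work, whereas you prove it inline from $\mathbf 1^T M_L^e=\mathbf 1^T M_C^e$) together with the divergence theorem. Your organization is arguably a touch cleaner---you identify the column sums of $C_k^e$ directly as face integrals $\int_{\partial K^e}\phi_j^e\vec n^e\ds$---while the paper adds and subtracts $\vec c_{ij}^e$, applies integration by parts to produce $-\vec c_{ji}^e$ plus a boundary term, and collects an antisymmetric remainder $\sum_{i,j}(\mat F_j^e-\mat F_i^e)\int_{\partial K^e}\phi_i^e\phi_j^e\vec n^e\ds$; the ingredients and substance are identical.
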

\begin{proof}
By definition, the contributions of mass and dissipative matrices sum to zero.
Using the zero row sum property of $\tilde C_k^e$, the partition of unity property of the Bernstein basis and integration by parts, we find that
\begin{align*}
&\sum_{i=1}^N f_i^e = \sum_{i=1}^N \lb \sum_{j\in \tilde{\mathcal N}_i}\l\mat F_j^e - \mat F_i^e\r \tilde{\vec c}_{ij}^e -\sum_{k \in \mathcal F_i} \int_{\Gamma_k^e} \phi_i^e\,\mat F_i^e\,\vec n^e \ds \rb \\& = 
\sum_{i=1}^N \sum_{j=1}^N \mat F_j^e \l \tilde{\vec c}_{ij}^e - \vec c_{ij}^e
+ \vec c_{ij}^e \r - \sum_{i=1}^N \int_{\partial K^e} \phi_i^e\,\mat F_i^e\,\vec n^e \ds \\ &=
\sum_{i,j=1}^N \lb \mat F_j^e \l \tilde{\vec c}_{ij}^e - \vec c_{ij}^e \r
- \mat F_j^e \vec c_{ji}^e + \mat F_j^e \int_{\partial K^e} \phi_i^e \phi_j^e \vec n^e \ds \rb
- \sum_{i=1}^N\int_{\partial K^e} \phi_i^e\,\mat F_i^e\,\vec n^e \ds \\ &=
\sum_{j=1}^N \mat F_j^e \lb \sum_{i=1}^N \l \tilde{\vec c}_{ij}^e - \vec c_{ij}^e \r - \sum_{i=1}^N \vec c_{ji}^e \rb +
\sum_{i,j=1}^N \l \mat F_j^e - \mat F_i^e \r \int_{\partial K^e} \phi_i^e \phi_j^e \vec n^e \ds = 0.
\end{align*}
The final step exploits the partition of unity property and the fact that the matrices $\tilde C_k^e - C_k^e$ have zero column sums, cf. \cite{kuzmin2020a}.
\end{proof}
\begin{remark}
The antidiffusive DG fluxes $f_{i,k}^e$ contain the difference between the low and high order numerical fluxes.
They constitute a straightforward generalization of the corresponding terms arising in limiters for DG discretizations of advection problems \cite{hajduk2020a,hajduk2020b}.
As was the case there, $f_{i,k}^e$ vanishes in 1D because face integrals correspond to point evaluations and only one Bernstein polynomial is non-zero at the end points of a 1D element.
\end{remark}
The next lemma is a direct consequence of \eqref{eq:riem-cond} and of the fact that local basis functions from different elements coincide along the shared face if the corresponding nodes are associated with the same physical location.
\begin{lemma}
Let $K^{e^\prime}$ be the neighbor element of $K^e$ along face $\Gamma_k^e = \Gamma_{k^\prime}^{e^\prime}$ and $\vec x_{i^\prime}^{e^\prime}$ be the node in $K^{e^\prime}$ with the same physical location as node $\vec x_i^e \in K^e$.
Then the antidiffusive DG fluxes satisfy $f_{i,k} = - f_{i^\prime,k^\prime}^{e^\prime}$.
\end{lemma}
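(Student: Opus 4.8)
The plan is to write out the definition \eqref{eq:anti-diff-flux} of $f_{i,k}^e$ for both elements sharing the face, match up all the geometric and state data across the interface, and then invoke the conservation property in \eqref{eq:riem-cond} to extract the sign flip.

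First I would fix the common face $\Gamma \coloneqq \Gamma_k^e = \Gamma_{k'}^{e'}$ and record the identifications that hold along it. The surface measure $\ds$ is the same in both integrals since the domain of integration is the same geometric set; the unit outward normals are opposite, $\vec n^{e'} = -\vec n^e$; the one-sided traces swap roles, so that $U_h^{e'} = \hat U_h^e$ and $\hat U_h^{e'} = U_h^e$ on $\Gamma$, and correspondingly $u_{i'}^{e'} = \hat u_i^e$, $\hat u_{i'}^{e'} = u_i^e$ (here $\hat u_i^e$ denotes the exterior nodal value \emph{along this particular face} $\Gamma_k^e$, which is well defined even when $\vec x_i^e$ lies at a junction of several faces); and, by the property recalled immediately before the lemma, the two local basis functions agree on $\Gamma$, i.e.\ $\phi_{i'}^{e'} = \phi_i^e$ there, because $\vec x_{i'}^{e'}$ and $\vec x_i^e$ occupy the same physical location $\vec x_I$ and the trace of a Bernstein basis function on a face is the lower-dimensional Bernstein polynomial attached to the trace of its node.

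Next I would apply the conservation relation, in the form $\mathcal H(V,U;-\vec n^e) = -\mathcal H(U,V;\vec n^e)$, to the low and high order fluxes separately:
\begin{align*}
\mathcal H\l u_{i'}^{e'}, \hat u_{i'}^{e'}; \vec n^{e'} \r &= \mathcal H\l \hat u_i^e, u_i^e; -\vec n^e \r = -\,\mathcal H\l u_i^e, \hat u_i^e; \vec n^e \r, \\
\mathcal H\l U_h^{e'}, \hat U_h^{e'}; \vec n^{e'} \r &= \mathcal H\l \hat U_h^e, U_h^e; -\vec n^e \r = -\,\mathcal H\l U_h^e, \hat U_h^e; \vec n^e \r.
\end{align*}
Subtracting shows that the bracketed integrand defining $f_{i',k'}^{e'}$ is exactly the negative of the one defining $f_{i,k}^e$; since the weight $\phi_{i'}^{e'}$ and the integration domain coincide with $\phi_i^e$ and $\Gamma_k^e$, integrating gives $f_{i',k'}^{e'} = -f_{i,k}^e$.

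I do not expect a genuine obstacle: the only ingredient that is not pure bookkeeping is the statement that the two element-local basis functions have identical traces on the shared face, and this has already been asserted in the paragraph preceding the lemma; the sign change itself follows directly from the conservation condition in \eqref{eq:riem-cond} together with the orientation convention $\vec n^{e'} = -\vec n^e$.
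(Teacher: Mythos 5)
Your proof is correct and follows exactly the route the paper indicates: the paper gives no formal proof but states that the lemma is ``a direct consequence of \eqref{eq:riem-cond} and of the fact that local basis functions from different elements coincide along the shared face,'' which is precisely the combination of the conservation property and the trace identifications you spell out. Your write-up simply makes explicit the bookkeeping (swapped traces, opposite normals, coinciding basis-function traces) that the paper leaves implicit.
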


\subsection{Limiting strategy for volumetric fluxes}\label{sec:vol}

This section summarizes the methods originally proposed in \cite{kuzmin2020,kuzmin2020a} for limiting antidiffusive element contributions in continuous Galerkin methods.
These approaches are also applicable to DG volume terms.
Due to \cref{lem:f-sum}, we can split $f_i^e$ into fluxes $f_{ij}^e$ between pairs of nodes as follows:
\begin{align}\label{eq:fluxcond}
f_i^e = \sum_{j \in \tilde{\mathcal N}_i \setminus \{i\}} f_{ij}^e,\qquad f_{ij}^e = -f_{ji}^e \quad \forall j \in \tilde{\mathcal N}_i \setminus \{i\}.
\end{align}
To preserve the compact sparsity pattern of the low order scheme, it is
worthwhile to use a flux decomposition such that $f_{ij}^e$ is nonzero
if and only if $\vec x_i^e$ and $\vec x_j^e$ are closest neighbors.
Following the design principle in \cite{kuzmin2020a}, 
we use a natural decomposition of the term
$\sum_{j\in \tilde{\mathcal N}_i} d_{ij}^e (u_i^e - u_j^e)$ into
 antidiffusive fluxes $d_{ij}^e (u_i^e - u_j^e)$ which
possess the compact stencil property by construction.
The remaining components
\begin{align*}
q_i^e = f_i^e + \sum_{j\in \tilde{\mathcal N}_i} d_{ij}^e (u_j^e - u_i^e)
\end{align*}
also sum to zero and are now distributed in the following manner.
Let $\tilde M_C^e = (\tilde m_{ij}^e)_{i,j=1}^N$ and $\tilde M_L^e$ be consistent and lumped element matrices of the continuous, piecewise $\mathbb V_1$ shape functions on B\'ezier subcells of $K^e$ (as defined, e.g., in \cite{hajduk2020a,kuzmin2020a}).
On simplices, we define auxiliary vectors $v^e \in \R^N$ as solutions to $(\tilde M_L^e - \tilde M_C^e) v^e = q^e$ and use them to construct the antidiffusive fluxes
\begin{align}\label{eq:fij}
f_{ij}^e = \tilde m_{ij}^e (v_i^e -v_j^e) 
+ d_{ij}^e (u_i^e - u_j^e)
\quad \forall j \in \tilde{\mathcal N}_i \setminus \{i\}
\end{align}
satisfying \eqref{eq:fluxcond}.
Just as in the low order method, fluxes of box elements should be zero for diagonal neighbors. 
Therefore, we modify the element matrices $\tilde M_C^e$ of such Bernstein elements by performing partial mass lumping which sets each off-diagonal entry $\tilde m_{ij}^e$ associated with a diagonal neighbor $j\in\tilde{\mathcal N}_i\backslash\{i\}$ of node $i$ to zero and adds it to the diagonal element $\tilde m_{ii}^e$.
As a consequence of this modification, the corresponding fluxes $\tilde m_{ij}^e (v_i^e -v_j^e)$ become equal to zero and definition \eqref{eq:fij} produces $f_{ij}^e=0$ whenever $d_{ij}^e=0$.

\begin{remark}
The sparse element matrix $\tilde M_L^e - \tilde M_C^e$ is a Poisson-type operator in the sense that it is symmetric positive semi-definite with zero row sums and negative off-diagonal entries.
The solution $v^e$ of the linear system involving this matrix is defined up to a constant which has no influence on the flux $f_{ij}^e$ defined by \eqref{eq:fij}.
We fix the value of this arbitrary constant by setting each entry in the last row of $\tilde M_L^e - \tilde M_C^e$ to one.
To avoid the relatively high cost of inverting the resulting non-singular matrix repeatedly, we define it on the reference element and store its inverse.
\end{remark}

In the process of limiting, we multiply $f_{ij}^e$ and $f_{ji}^e$ with correction factors $\alpha_{ij}^e = \alpha_{ji}^e \in [0,1]$, which retains the conservation property established in \cref{lem:f-sum}.
The limited fluxes $f_{ij}^{e,*}=\alpha_{ij}^ef_{ij}^e$ transform
the bar states $\bar u_{ij}^e \in\mathcal G$ of the low order scheme
\eqref{eq:barstateform}
into
\begin{align}\label{eq:limbarstate}
\bar u_{ij}^{e,*} = \bar u_{ij}^e + \frac{f_{ij}^{e,*}}{2 d_{ij}} ~\in \mathcal G_i^e \cap \mathcal G,
\end{align}
where $\mathcal G_i^e$ is a set of admissible states defined by the solution $U_h$ at the beginning of the current Runge-Kutta stage or iteration.
Let
\begin{align*}
\mathcal G_i^e = \lc u_i^e \in \R^m:~
u_i^{e,\min} \cdot e_l \leq u_i^e \cdot e_l \leq u_i^{e,\max} \cdot e_l,
\quad l=1,\ldots,m
\rc
\end{align*}
be defined using the unit vector $e_l$ of $\R^m$ to pick out the inequality constraints for the  $l$-th conserved variable.
We select numerically admissible bounds $u_i^{e,\min},u_i^{e,\max}$
for all steps of the limiting process in \cref{sec:bounds} after completing the general presentation of our method.
For now, let us assume that generic bounds $u_i^{e,\min}, u_i^{e,\max}$ to be imposed on $u_i^e$ are available.
Following Kuzmin \cite{kuzmin2020}, we define the `monolithically' limited fluxes
\begin{align}\label{eq:limelflux}
f_{ij}^{e,*} = \begin{cases}
\min\lc
f_{ij}^e, 2d_{ij}^e\min \lc u_i^{e,\max} - \bar u_{ij}^e, \bar u_{ji}^e - u_j^{e,\min} \rc \rc & \mbox{if } f_{ij}^e \geq 0, \\
\max\lc
f_{ij}^e, 2d_{ij}^e\max \lc u_i^{e,\min} - \bar u_{ij}^e, \bar u_{ji}^e - u_j^{e,\max} \rc \rc & \mbox{if } f_{ij}^e < 0
\end{cases}
\end{align}
directly instead of calculating a correction factor
$\alpha_{ij}^e$ and applying it to $f_{ij}^{e}$.

The semi-discrete bar state form utilizing the limited fluxes $f_{ij}^{e,*}$ is obtained by replacing $\bar u_{ij}^e$ in \eqref{eq:barstateform} with $\bar u_{ij}^{e,*}$.
This is equivalent to replacing $f_i^e$ in \eqref{eq:unlimited} by $f_i^{e,*} = \sum_{j \in \tilde{\mathcal N}_i\backslash\{i\}} f_{ij}^{e,*}$.
The latter form is better suited for practical implementation purposes.
As explained in \citep{kuzmin2020}, direct calculation of the products $2d_{ij}^e \bar u_{ij}^e$ and $2d_{ij}^e\bar u_{ij}^{e,*}$ makes it possible to avoid unnecessary division and multiplication by $d_{ij}^e$.
Hence, the limited fluxes $f_{ij}^{e,*}$ defined by \eqref{eq:limelflux}
and the contribution of each node pair to the (flux-corrected) bar state form can be calculated without using the formal definition of $\bar u_{ij}^e$ and $\bar u_{ij}^{e,*}$.

\subsection{Limiting strategy for interfacial fluxes}\label{sec:flux}

To preserve the conservation property $f_{i,k}^e = -f_{i^\prime,k^\prime}^{e^\prime}$ of the antidiffusive DG fluxes defined by \eqref{eq:anti-diff-flux}, we need to use the same (virtual) correction factor for $f_{i,k}^e$ and $f_{i^\prime,k^\prime}^{e^\prime}$.
This is achieved similarly to the flux limiter for $f_{ij}^{e}$.
Once again, we assume the availability of generic bounds satisfying 
\begin{align}\label{eq:cond-bounds}
u_i^{e,\min} = u_{i^\prime}^{e^\prime,\min}, \qquad u_i^{e,\max} = u_{i^\prime}^{e^\prime,\max}.
\end{align}
To enforce these local bounds in a way which guarantees that conservation is not inhibited, the limited interfacial fluxes are defined by 
\begin{align}\label{eq:limdgflux}
f_{i,k}^{e,*} = \begin{cases}
\min \lc f_{i,k}, 2 d_{i,k}^e \min \lc
u_i^{e,\max} - \bar u_{i,k}^e, \bar u_{i,k}^e - u_i^{e,\min} 
\rc\rc & \mbox{if } f_{i,k}^e \geq 0, \\
\max \lc f_{i,k}, 2 d_{i,k}^e \max \lc
u_i^{e,\min} - \bar u_{i,k}^e, \bar u_{i,k}^e - u_i^{e,\max} 
\rc\rc & \mbox{if } f_{i,k}^e < 0.
\end{cases}
\end{align}
Note that this formula resembles \eqref{eq:limelflux}.
However, we have simplified \eqref{eq:limdgflux} by exploiting \eqref{eq:fluxbarstateeq}.
The semi-discrete scheme including limited DG fluxes is obtained by
replacing $f_{i,k}^e$ in \eqref{eq:unlimited} with $f_{i,k}^{e,*}$.
Again, direct calculation of the products $2 d_{i,k}^e\bar u_{i,k}^e$ that
appear in the definition of $f_{i,k}^{e,*}$ is preferable because 
denominator-free implementations are less sensitive to rounding errors.
The addition of $f_{i,k}^{e,*}$ has the same effect as replacement of 
$\bar u_{i,k}^e$ with 
\begin{align}\label{eq:limfluxstate}
\bar u_{i,k}^{e,*} = \bar u_{i,k}^e + \frac{f_{i,k}^{e,*}}{2d_{i,k}}
\end{align}
in the bar state form \eqref{eq:barstateform} of the
semi-discrete scheme. We emphasize again
that such representations are useful for
analysis and verification purposes but the flux-corrected
version of \eqref{eq:unlimited} may be employed in practice.

\subsection{Sequential limiting for systems of conservation laws}\label{sec:seq}

The scalar case $m=1$ is mostly covered by the explanations in the previous sections.
Limiting for systems ($m>1$), however, is more involved due to the coupling of multiple variables.
Some conserved quantities can be expressed as products of a \textit{specific variable} (a ratio of two conserved quantities) and a scalar conserved variable which we call the \textit{main unknown} in what follows.
For instance, momentum is the product of velocity and density in gas dynamics, whereas the main unknown of shallow water models is the water height.
When it comes to limiting for such systems, it makes sense to impose bounds on the specific variables, rather than conserved products, i.e., limit velocity rather than momentum, in the above examples.
This task can be accomplished by evolving the main variable first and constraining the products to satisfy local maximum principles for the specific variables in subsequent steps.
Algorithms based on this approach are called \textit{sequential limiters} \cite{dobrev2017,hajduk2019}.

A sequential bar state limiter for piecewise (multi-)linear continuous Galerkin
discretizations of hyperbolic systems was introduced in \cite{kuzmin2020}.
In this work, we extend it to the DG setting and high order approximations.
We begin with the volumetric terms and use the generic notation $\vrho$
and $(\vrho\vphi)$ for the conserved quantities of system \eqref{eq:pde}.
The main variable associated with $(\vrho\vphi)$ is $\vrho$, whereas
$\vphi$ is the corresponding specific variable (amount of $\vrho\vphi$
per unit of $\vrho$). The bar states for such derived quantities can be
defined as \cite{kuzmin2020}
\begin{align*}
\bar \vphi_{ij}^e = \frac{\overline{(\vrho\vphi)}_{ij}^e + \overline{(\vrho\vphi)}_{ji}^e}{\bar\vrho_{ij}^e + \bar\vrho_{ji}^e} = \bar \vphi_{ji}^e.
\end{align*}
The previous section details the steps necessary to produce limited bar states $\bar\vrho_{ij}^{e,*} = \bar \vrho_{ij}^e + \frac{f_{\vrho,ij}^{e,*}}{2d_{ij}^e}$ for the scalar main variable $\vrho$ at the first step of the sequential approach.
To limit $(\vrho\vphi)$ in a manner consistent with the product rule of calculus, we split the antidiffusive fluxes $f_{\vrho\vphi,ij}^e$ of the unknown $(\vrho\vphi)$ into a flux that leaves $\bar \vphi_{ij}^e$ unchanged and a remainder $g_{\vrho\vphi;\,ij}^{e}$ which requires additional limiting.
This product rule splitting is defined by
\begin{align}\label{eq:seq-splitting}
f_{\vrho\vphi;\,ij}^{e,*} =\;& 2d_{ij}^e \l\bar\vrho_{ij}^{e,*} \bar \vphi_{ij}^e - \overline{(\vrho\vphi)}_{ij}^e \r + g_{\vrho\vphi;\,ij}^{e,*}, 
\\
g_{\vrho\vphi;\,ij}^{e} =\;& f_{\vrho\vphi;\,ij}^{e} - 2d_{ij}^e \l\bar\vrho_{ij}^{e,*} \bar \vphi_{ij}^e - \overline{(\vrho\vphi)}_{ij}^e \r.\notag
\end{align}
The setting $g_{\vrho\vphi;\,ij}^{e,*} = g_{\vrho\vphi;\,ij}^{e}$ in \eqref{eq:seq-splitting} recovers the unlimited version of the scheme, while the setting $g_{\vrho\vphi;\,ij}^{e,*}=0$ yields a low order approximation with the bound preserving bar states $\bar\vrho_{ij}^{e,*} \bar \vphi_{ij}^e$.
As explained in \cite{kuzmin2020}, the bar states 
$\overline{(\vrho\vphi)}_{ij}^{e,*}$ of the flux-corrected high order scheme
should be constrained to stay in the range $[\bar\vrho_{ij}^{e,*}
\vphi_i^{e,\min},\bar \vrho_{ij}^{e,*}\vphi_i^{e,\max}]$, where
$\vphi_i^{e,\min},\vphi_i^{e,\max}$ are suitably defined local
bounds of numerical admissibility conditions. 
It follows
that the limited fluxes $g_{\vrho\vphi;\,ij}^{e,*}$ should satisfy
the inequality constraints 
\begin{align*}
g_{\vrho\vphi;\,ij}^{e,\min} \coloneqq 2d_{ij}^e 
\bar\vrho_{ij}^{e,*} \l \vphi_i^{e,\min} - \bar\vphi_{ij}^e \r \leq 
g_{\vrho\vphi;\,ij}^{e,*} \leq 2d_{ij}^e
\bar\vrho_{ij}^{e,*} \l \vphi_i^{e,\max} - \bar\vphi_{ij}^e \r \eqqcolon
g_{\vrho\vphi;\,ij}^{e,\max}
\end{align*}
and the equality constraint $g_{\vrho\vphi;\,ij}^{e,*} + g_{\vrho\vphi;\,ji}^{e,*}=0$
which ensures conservation.
These requirements can be met, similarly to \eqref{eq:limelflux},\eqref{eq:limdgflux}, by setting
\begin{align}\label{eq:seqlimel}
g_{\vrho\vphi;\,ij}^{e,*} = 
\begin{cases}
\min \lc g_{\vrho\vphi;\,ij}^{e}, \min \lc g_{\vrho\vphi;\,ij}^{e,\max}, 
-g_{\vrho\vphi;\,ji}^{e,\min}\rc \rc & 
\mbox{if } g_{\vrho\vphi;\,ij}^{e} \geq 0, \\
\max \lc g_{\vrho\vphi;\,ij}^{e}, \max \lc g_{\vrho\vphi;\,ij}^{e,\min},
-g_{\vrho\vphi;\,ji}^{e,\max} \rc \rc &
\mbox{if } g_{\vrho\vphi;\,ij}^{e} < 0.
\end{cases}
\end{align}
The fluxes $f_{\vrho\vphi;\,ij}^{e,*}$ are obtained by substituting
\eqref{eq:seqlimel} into \eqref{eq:seq-splitting} and the flux-corrected scheme is defined by replacing the specific components of $f_i^e$ in \eqref{eq:unlimited} with the sums $f_{\vrho\vphi;\,i}^{e,*} = \sum_{j \in \tilde{\mathcal N}_i\backslash\{i\}} f_{\vrho\vphi;\,ij}^{e,*}$.

The sequential limiter for interfacial DG fluxes operates in virtually the same fashion as its volumetric counterpart.
After computing the limited bar states $\bar\vrho_{i,k}^{e,*} = \bar \vrho_{i,k}^{e} + \frac{f_{\vrho;i,k}^e}{2d_{i,k}}$ for $\vrho$ and the bar states 
\begin{align}\label{eq:derived}
\bar \vphi_{i,k}^e = \bar \vphi_{i^\prime,k^\prime}^{e^\prime} =
\frac{\overline{(\vrho\vphi)}_{i,k}^e + \overline{(\vrho\vphi)}_{i^\prime,k^\prime}^{e^\prime}}{\bar\vrho_{i,k}^e + \bar\vrho_{i^\prime,k^\prime}^{e^\prime}} = \frac{\overline{(\vrho\vphi)}_{i,k}^e}{\bar\vrho_{i,k}^e}
\end{align}
 for $\vphi$, 
the limited antidiffusive fluxes are calculated using the splitting
\begin{align}\label{eq:seqantidiffflux}
f_{\vrho\vphi;\,i,k}^{e,*} =\;& 2d_{i,k}^e \l\bar\vrho_{i,k}^{e,*} \bar \vphi_{i,k}^e - \overline{(\vrho\vphi)}_{i,k}^e \r + g_{\vrho\vphi;\,i,k}^{e,*}, \\
g_{\vrho\vphi;\,i,k}^{e} =\;& f_{\vrho\vphi;\,i,k}^{e} - 2d_{i,k}^e \l\bar\vrho_{i,k}^{e,*} \bar \vphi_{i,k}^e - \overline{(\vrho\vphi)}_{i,k}^e \r.\notag
\end{align}
Again, the flux-corrected counterpart $\bar\vphi_{i,k}^{e,*}$
of the interfacial bar state $\bar \vphi_{i,k}^e$
stays in the range determined by suitable generic bounds $\vphi_i^{e,\min},\vphi_i^{e,\max}$ if
\begin{align*}
g_{\vrho\vphi;\,i,k}^{e,\min} \coloneqq 2d_{i,k}^e 
\bar\vrho_{i,k}^{e,*} \l \vphi_i^{e,\min} - \bar\vphi_{i,k}^e \r \leq 
g_{\vrho\vphi;\,i,k}^{e,*} \leq 2d_{i,k}^e
\bar\vrho_{i,k}^{e,*} \l \vphi_i^{e,\max} - \bar\vphi_{i,k}^e \r \eqqcolon
g_{\vrho\vphi;\,i,k}^{e,\max}.
\end{align*}
We enforce these interfacial flux constraints by setting
\begin{align}\label{eq:seqlimflux}
g_{\vrho\vphi;\,i,k}^{e,*} = 
\begin{cases}
\min \lc g_{\vrho\vphi;\,i,k}^{e}, \min \lc g_{\vrho\vphi;\,i,k}^{e,\max}, 
-g_{\vrho\vphi;\,i,k}^{e,\min}\rc \rc & 
\mbox{if } g_{\vrho\vphi;\,i,k}^{e} \geq 0, \\
\max \lc g_{\vrho\vphi;\,i,k}^{e}, \max \lc g_{\vrho\vphi;\,i,k}^{e,\min},
-g_{\vrho\vphi;\,i,k}^{e,\max} \rc \rc &
\mbox{if } g_{\vrho\vphi;\,i,k}^{e} < 0
\end{cases}
\end{align}
and replace the components of $f_{i,k}^e$ in \eqref{eq:unlimited} by
the limited fluxes \eqref{eq:seqantidiffflux}.
Similarly to other limiters considered in this work, the sequential
limiting of interfacial fluxes supports the possibility of
a denominator-free implementation.

\subsection{Definition of admissible bounds}\label{sec:bounds}

So far, we have left unspecified how to choose the bounds for limiting in \eqref{eq:limelflux},\eqref{eq:limdgflux},\eqref{eq:seqlimel}, and \eqref{eq:seqlimflux}.
Let us begin with the scalar case.
The mean value theorem implies that the bar states of a scalar variable $u$ satisfy
\begin{align*}
\min\lc u_i^e, u_j^e \rc \leq \bar u_{ij}^e \leq \max\lc u_i^e, u_j^e \rc, \quad
\min\lc u_i^e, u_{i^\prime}^{e^\prime} \rc \leq \bar u_{i,k}^e \leq \max\lc u_i^e, u_{i^\prime}^{e^\prime} \rc.
\end{align*}
The presented limiting strategy guarantees that the flux-corrected bar
states $\bar u_{ij}^{e,*}$ and $\bar u_{i,k}^{e,*}$ stay in a range
$[u_i^{e,\min},u_i^{e,\max}]$ of admissible values
which must contain $\bar u_{ij}^e$ for all $j \in \tilde{\mathcal N}_i\setminus \{i\}$ and $\bar u_{i,k}^e$ for all $k \in \mathcal F_i$ by definition.
To ensure that the same local bounds $u_i^{e,\min}=u_I^{\min}$ and
$u_i^{e,\max}=u_I^{\max}$ can be used for the Bernstein coefficients of all
nodes $\vec x_i^e$ having the same physical location $\vec x_I$, we define
\begin{align}\label{eq:bounds}
u_I^{\min} \coloneqq \min_{(e,i):\,\vec x_i^e = \vec x_I}\min_{j \in \tilde{\mathcal N}_i} u_j^e,\qquad
u_I^{\max} \coloneqq \max_{(e,i):\,\vec x_i^e = \vec x_I}\max_{j \in \tilde{\mathcal N}_i} u_j^e.
\end{align}
Note that \eqref{eq:bounds} satisfies condition \eqref{eq:cond-bounds}.
In contrast to the DG approximation $U_h$, the Bernstein finite element
interpolants $U_h^{\min}$ and $U_h^{\max}$ of the bounds defined by
\eqref{eq:bounds} are globally continuous.
In our experience \cite{hajduk2020a,hajduk2020b}, the use of
continuous bounding functions in limiters for DG schemes leads
to more reliable algorithms than discontinuous alternatives (as
considered, for instance, in \cite{anderson2017}).
The compact stencil bounds defined \eqref{eq:bounds} are generally
more restrictive than the full stencil bounds that we
used in \cite{hajduk2020a,hajduk2020b}. While the latter choice
was found to produce good results in the context of limited DG
schemes for the linear advection equation, its use for nonlinear
problems may give rise to spurious oscillations within high order elements. 
To avoid such ripples, we construct the nodal bounds \eqref{eq:bounds} by
taking maxima/minima over the union of subcell stencils $\tilde{\mathcal N}_i$  
rather than full element stencils (cf. also \cite{kuzmin2020a}).

For a scalar conservation law, the bar states $\bar u_{ij}^e$ and
$\bar u_{i,k}^e$ are bounded by pairs of Bernstein coefficients
belonging to $\mathcal G_I=[u_I^{\min},u_I^{\max}]$, where $u_I^{\min}$
and $u_I^{\max}$ are defined by \eqref{eq:bounds}. The main variable
$u=\vrho$ of a hyperbolic system can be limited as a scalar
quantity but the mean value theorem is no longer applicable
and, therefore, the bar states are not provably bounded by
$u_I^{\min}$ and $u_I^{\max}$. To rectify this, we include them in
the definition 
\begin{subequations}\label{eq:bounds-sys}
\begin{align}
u_i^{e,\min} = \min\lc u_I^{\min}, \bar u_{ij}^e \rc, \quad
u_i^{e,\max} = \max\lc u_I^{\max}, \bar u_{ij}^e \rc\qquad &\mbox{in \eqref{eq:limelflux}} \\
u_i^{e,\min} = \min\lc u_I^{\min}, \bar u_{i,k}^e \rc, \quad
u_i^{e,\max} = \max\lc u_I^{\max}, \bar u_{i,k}^e \rc \qquad &\mbox{in \eqref{eq:limdgflux}}
\end{align}
\end{subequations}
of the local bounds for the volumetric and interfacial terms, respectively.
These bounds can be seen as a generalization of \eqref{eq:bounds} to the system case.

An appropriate choice of bounds for specific variables $\vphi$ of conserved products $(\vrho\vphi)$ appears to be crucial to successful prevention of spurious ripples in sequential limiters for hyperbolic systems.
We have observed violations of maximum principles if these bounds were chosen too wide.
For volumetric terms, we obtained the best results using the maxima and minima of all states contributing to $\vphi_i^e$ to define the nodal bounds (cf. \cite{kuzmin2020})
\begin{subequations}\label{eq:seqbounds}
\begin{align}
&\vphi_I^{\min} = \min_{(e,i):\,\vec x_i^e = \vec x_I}
\min\lc \frac{(\vrho\vphi)_i^e}{\vrho_i^e}, \min\lc \min_{j \in \tilde{\mathcal N}_i} \bar \vphi_{ij}^e,~
\min_{k \in \mathcal F_i} \bar \vphi_{i,k}^e \rc \rc, \\
&\vphi_i^{\max} = \max_{(e,i):\,\vec x_i^e = \vec x_I}
\max\lc \frac{(\vrho\vphi)_i^e}{\vrho_i^e}, \max\lc \max_{j \in \tilde{\mathcal N}_i} \bar \vphi_{ij}^e,~
\max_{k \in \mathcal F_i} \bar \vphi_{i,k}^e \rc \rc.
\end{align}
\end{subequations}
In contrast to \eqref{eq:bounds}, we are not using any nodal states other than $\frac{(\vrho\vphi)_i^e}{\vrho_i^e}$ here but all low order bar states associated with the given node are included.
The corresponding bounding functions $\vphi_h^{\min}$ and $\vphi_h^{\max}$ are continuous again.

The different nature of volumetric terms and DG fluxes may be taken 
into account when it comes to constructing the property-preserving
bounds $\vphi_k^{e,\min}$ and $\vphi_k^{e,\max}$ for the
interfacial flux limiter \eqref{eq:seqlimflux}.
Sampling the auxiliary bar states \eqref{eq:derived} of nodes $\vec x_i^e$ belonging to the face component $\Gamma_k^e$, we define
\begin{align}\label{eq:fluxbounds}
\vphi_k^{e,\min} = \min_{\vec x_i^e \in \Gamma_k^e} \bar \vphi_{i,k}^e, \qquad \vphi_k^{e,\max} = \max_{\vec x_i^e \in \Gamma_k^e} \bar \vphi_{i,k}^e.
\end{align}
Although this definition is inconsistent with \eqref{eq:seqbounds}, it is motivated by the fact that the high-order flux $\mathcal H \l U_h^e,\hat U_h^e;\vec n^e \r$ across $\Gamma_k^e$ depends only on the Bernstein coefficients of nodes that are physically located on $\Gamma_k^e$.
To preserve this domain of dependence in the process of flux correction, we construct the bounds \eqref{eq:fluxbounds} of the limiting formula \eqref{eq:seqlimflux} individually for each face.

\subsection{Summary of the limiting procedure}
We end the description of our method with a brief summary of the limiting procedure.
The semi-discrete bound preserving formulation reads
\begin{align}\label{eq:limited}
m_i^e\frac{\d u_i^e}{\d t} = \;& \sum_{j\in \tilde{\mathcal N}_i} \lb d_{ij}^e (u_j^e - u_i^e) - \l \mat F_j^e - \mat F_i^e \r \tilde{\vec c}_{ij}^e \rb +
\sum_{j\in \tilde{\mathcal N}_i\backslash\{i\}}f_{ij}^{e,*}
\notag \\& + \sum_{k \in \mathcal F_i} \l \frac 1 2\int_{\Gamma_k^e} \phi_i^e\ds \lb \big( \mat F_i^e - \hat{\mat F}_i^e \big)\, \vec n^e + \lambda_i^e\,(\hat u_i^e - u_i^e) \rb + f_{i,k}^{e,*} \r.
\end{align}
Note that by omitting $f_{ij}^{e,*},\,f_{i,k}^{e,*}$ in \eqref{eq:limited} altogether, we reproduce the low order scheme \eqref{eq:loworder}.
For the scalar case, $f_{ij}^{e,*}$ and $f_{i,k}^{e,*}$ are defined by \eqref{eq:limelflux} and \eqref{eq:limdgflux}, respectively.
The bounds for both limiters are defined by \eqref{eq:bounds}.

For the system case, the fluxes for main unknowns are also limited using \eqref{eq:limelflux} and \eqref{eq:limdgflux} with bounds \eqref{eq:bounds-sys}.
Products of the main variables and ratios of conserved quantities are constrained using the limited fluxes \eqref{eq:seq-splitting}, \eqref{eq:seqantidiffflux} based on \eqref{eq:seqlimel}, \eqref{eq:seqlimflux} with respective bounds given by \eqref{eq:seqbounds}, \eqref{eq:fluxbounds}.

\section{Numerical results}\label{sec:results}

Having completed the description of our methodology, we now present results for classical benchmarks of various conservation laws.
We begin with tests for scalar linear and nonlinear transport models.
Further numerical examples demonstrate the feasibility of applying our method to the Euler and shallow water equations.
The implementation of all algorithms presented in this article is based on the open source C++ library MFEM \cite{mfem,anderson2019}.

For piecewise polynomial approximations of degree $p\in \N_0$, we abbreviate the methods under investigation using
the suffix $\mathbb V_p \in \{\mathbb P_p,\,\mathbb Q_p\}$ to indicate whether simplicial or box element meshes are used.
In our numerical study, we compare the results for the following spatial discretizations:
\begin{itemize}
\item DG-$\mathbb V_p$: Standard DG scheme, as defined by \eqref{eq:semi-disc-dg},
\item LO-$\mathbb V_p$: Low order method, as  defined by \eqref{eq:loworder},
\item MCL-$\mathbb V_p$: Monolithic convex limiting, as defined by \eqref{eq:limited}.
\end{itemize}
In this section, we occasionally compare piecewise constant approximations to limited solutions.
In such instances, $p=0$ always refers to DG-$\mathbb V_0$ approximations (rather than LO or MCL) because they do not require limiting.
As in \cite{hajduk2020a,hajduk2020b}, we often compare results for approximations of increasingly high order on a sequence of successively coarsened meshes.
The number of mesh elements $E=E(p)$ for each simulation is fitted to the  number of nodes per element $N=N(p)$ in such a way that the total number of degrees of freedom per variable (\#DOF = $EN$) remains constant.
For instance, the finite element spaces $\mathbb Q_2$ and $\mathbb Q_5$ have the same dimensions if the mesh for $\mathbb Q_2$ corresponds to one level of uniform refinement of the mesh for $\mathbb Q_5$.

All numerical solutions are evolved or marched to a steady state using explicit SSP Runge-Kutta methods \cite{gottlieb2001}.
In all time-dependent benchmarks, we employ the third order explicit time integrator with three stages, whereas forward Euler time stepping is sufficient for problems with steady state solutions.

\subsection{Linear transport in 1D}

Advective transport of a density field $u:\Omega \times \R_+\rightarrow \R$ in a 1D domain $\Omega \subseteq \R$ with unit velocity is described by the conservation law
\begin{align*}
\frac{\partial u}{\partial t} + \frac{\partial u}{\partial x} = 0 \qquad \mbox{in } \Omega \times \R_+.
\end{align*}
The one-dimensional unit normal $\vec n$ has as single component $n_x\in\{-1,1\}$ and the numerical flux \eqref{eq:LaxFriedrichs} simplifies to the upwind-sided trace
\begin{align*}
\mathcal H(u,v,\vec n) = \frac 1 2 \l (u+v)n_x + |n_x| (u-v) \r = \begin{cases}
u & \mbox{if } n_x = 1, \\
v & \mbox{if } n_x = -1.
\end{cases}
\end{align*}

We test the ability of the proposed methods to capture discontinuous and smooth initial profiles without introducing spurious oscillations.
To this end, we equip $\Omega = (0,1)$ with periodic boundary conditions.
Hence the analytical solution at any time $t \in \N$ coincides with the initial condition $u_0$.
In this setting, the invariant set is $\mathcal G = \lb \min_{\bar \Omega} u_0,\,\max_{\bar \Omega} u_0 \rb$.
The initial profile 
\begin{align}\label{eq:advection}
u_0(x) = \begin{cases}
1 & \mbox{if } 0.2 \leq x \leq 0.4, \\
\exp(10)\exp\l\frac{1}{0.5-x}\r\exp\l \frac{1}{x-0.9}\r & \mbox{if } 0.5 < x < 0.9, \\
0 & \mbox{otherwise}
\end{cases}
\end{align}
features a discontinuous part and an infinitely differentiable part.
We begin by advecting this profile up to the final time $t=1$.

For these simulations, we use time step $\Delta t$\,=\,$10^{-3}$, \#DOF\,=\,192
and consider orders $p \in \{0,2,5,11,23\}.$
The results for DG, LO and MCL are displayed in \cref{fig:advection}.
As expected, the standard DG method produces spurious ripples close to the discontinuities.
The LO scheme yields very inaccurate approximations which are even slightly more diffusive than the DG-$\mathbb Q_0$ result employing the same \#DOF.
However, it is remarkable that the accuracy of LO solutions does not deteriorate significantly as $p$ is increased while the mesh is coarsened. 
This property of the method is due to the use of preconditioned gradient matrix $\tilde C_1^e$ in definition \eqref{eq:dMat} of the artificial viscosity coefficients. 

The accuracy of numerical solutions obtained with the subcell flux limiting approach is also very similar for different orders and, in contrast to the pure DG results, the presented MCL approximations satisfy maximum principles.
This behavior indicates optimality of the method for a given number of \#DOF (cf. \cite{hajduk2020a}).
The slight peak clipping effects at the top of the exponential hill can be avoided by using smoothness indicators \cite{hajduk2020b,lohmann2017}.

\begin{figure}[ht!]
\centering
\begin{subfigure}[b]{0.325\textwidth}
\caption{DG solutions.}
\fbox{\includegraphics[scale=0.43]{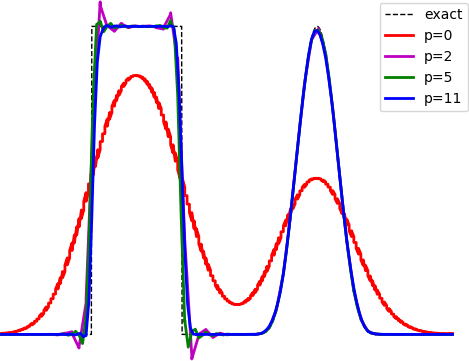}}
\end{subfigure}
\begin{subfigure}[b]{0.325\textwidth}
\caption{LO solutions.}
\fbox{\includegraphics[scale=0.43]{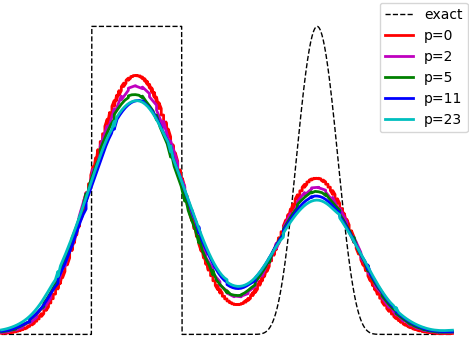}}
\end{subfigure}
\begin{subfigure}[b]{0.325\textwidth}
\caption{MCL solution.}
\fbox{\includegraphics[scale=0.43]{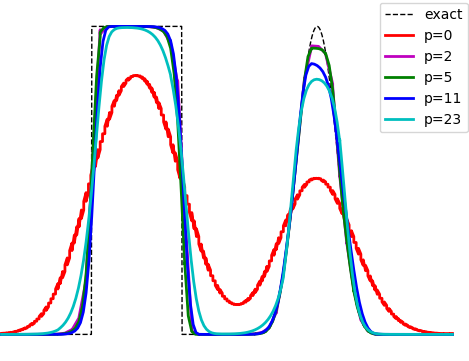}}
\end{subfigure}
\caption{Numerical approximations to the 1D advection equation with initial condition \eqref{eq:advection} obtained using $p \in \{0,2,5,11,23\}$.}\label{fig:advection}
\end{figure}


Next, we impose periodic boundary conditions at the endpoints of the domain $\Omega=(-1,1)$
and advect the smooth initial profile
\begin{align}\label{eq:advection-smooth}
u_0(x) = \exp\l-25x^2\r
\end{align}
up to the time instant $t=2$, at which we compute the $L^1(\Omega)$ errors and corresponding estimated orders of convergence (EOC) for DG, LO and MCL approximations using $p \in \{1,\dots,4\}$.
The time step size $\Delta t\,=\,10^{-4}$ is chosen to be small enough for the time discretization errors to be negligible compared to the spatial discretization error on the finest mesh.
The results presented in \cref{tab:dg-advection,tab:lo-advection,tab:mcl-advection} confirm optimal rates of convergence.
DG approximations converge with order $p+1$, the low order method converges at least with order $\frac 1 2$, and limited solutions are at least second order accurate.
Higher than second order convergence rates cannot be expected for limiters that preserve the local extrema of the old solution
\cite{godunov1959,zhang2010}.
Thus the local bounds of the inequality constraints for the antidiffusive fluxes need to be relaxed in smooth regions \cite{dumbser2014,guermond2018,hajduk2020b,pazner-preprint,vilar2019}.
For that purpose, we plan to equip our algorithm with suitably designed smoothness indicators in the future.

\begin{table}[ht!]
\centering
\begin{tabular}{||c||c|c||c|c||c|c||c|c||}
\hline
$1/h$ & $p=1$ & EOC & $p=2$ & EOC & $p=3$ & EOC & $p=4$ & EOC\\
\hline
24  & 1.27E-02 &      & 3.21E-04 &      & 7.38E-06 &      & 4.11E-07 & \\
32  & 6.43E-03 & 2.36 & 8.28E-05 & 4.71 & 2.13E-06 & 4.32 & 9.84E-08 & 4.97 \\
48  & 2.26E-03 & 2.58 & 1.53E-05 & 4.16 & 4.17E-07 & 4.02 & 1.27E-08 & 5.05 \\
64 & 1.01E-03 & 2.79 & 5.74E-06 & 3.41 & 1.32E-07 & 4.01 & 3.09E-09 & 4.90 \\
96 & 3.12E-04 & 2.91 & 1.62E-06 & 3.11 & 2.61E-08 & 3.99 &&\\
128 & 1.34E-04 & 2.92 & 6.86E-07 & 3.00 &&&&\\
192 & 4.17E-05 & 2.89 &&&&&&\\
\hline
\end{tabular}
\caption{The $\|\cdot\|_{L^1(\Omega)}$~errors and corresponding EOC of DG-$\mathbb Q_p$, $p \in \{1,\hdots,4\}$ solutions to the 1D advection equation with initial condition \eqref{eq:advection-smooth}.}\label{tab:dg-advection}
\end{table}

\begin{table}[ht!]
\centering
\begin{tabular}{||c||c|c||c|c||c|c||c|c||}
\hline
$1/h$ & $p=1$ & EOC & $p=2$ & EOC & $p=3$ & EOC & $p=4$ & EOC\\
\hline
24  & 9.43E-02 &      & 8.11E-02 &      & 6.73E-02 &      & 6.02E-02 & \\
32  & 7.93E-02 & 0.60 & 6.73E-02 & 0.65 & 5.51E-02 & 0.70 & 4.89E-02 & 0.72 \\
48  & 6.05E-02 & 0.67 & 5.05E-02 & 0.71 & 4.05E-02 & 0.76 & 3.56E-02 & 0.78 \\
64 & 4.92E-02 & 0.72 & 4.05E-02 & 0.77 & 3.21E-02 & 0.81 & 2.81E-02 & 0.83 \\
96 & 3.58E-02 & 0.78 & 2.91E-02 & 0.82 & 2.27E-02 & 0.85 &&\\
128 & 2.82E-02 & 0.83 & 2.27E-02 & 0.86 &&&&\\
192 & 1.98E-02 & 0.87 &&&&&&\\
\hline
\end{tabular}
\caption{The $\|\cdot\|_{L^1(\Omega)}$~errors and corresponding EOC of LO-$\mathbb Q_p$, $p \in \{1,\hdots,4\}$ solutions to the 1D advection equation with initial condition \eqref{eq:advection-smooth}.}\label{tab:lo-advection}
\end{table}

\begin{table}[ht!]
\centering
\begin{tabular}{||c||c|c||c|c||c|c||c|c||}
\hline
$1/h$ & $p=1$ & EOC & $p=2$ & EOC & $p=3$ & EOC & $p=4$ & EOC\\
\hline
24  & 1.04E-02 &      & 2.52E-03 &      & 1.27E-03 &      & 5.51E-04 & \\
32  & 5.69E-03 & 2.11 & 1.36E-03 & 2.14 & 6.60E-04 & 2.26 & 2.79E-04 & 2.37 \\
48  & 2.36E-03 & 2.17 & 5.46E-04 & 2.26 & 2.59E-04 & 2.31 & 1.07E-04 & 2.38 \\
64 & 1.27E-03 & 2.17 & 2.82E-04 & 2.30 & 1.32E-04 & 2.35 & 5.53E-05 & 2.28 \\
96 & 5.08E-04 & 2.25 & 1.08E-04 & 2.36 & 4.98E-05 & 2.40 &&\\
128 & 2.59E-04 & 2.34 & 5.58E-05 & 2.31 &&&&\\
192 & 1.01E-04 & 2.33 &&&&&&\\
\hline
\end{tabular}
\caption{The $\|\cdot\|_{L^1(\Omega)}$~errors and corresponding EOC of MCL-$\mathbb Q_p$, $p \in \{1,\hdots,4\}$ solutions to the 1D advection equation with initial condition \eqref{eq:advection-smooth}.}\label{tab:mcl-advection}
\end{table}

\subsection{Burgers equation}

In the next test of this section, we apply the methods under investigation to
a nonlinear conservation law for a
scalar quantity $u$ evolving in a domain $\Omega\subset\R^d$.
Using the constant vector $\vec v = (1,\dots,1)^T \in \R^d$
to isotropically extend the flux $f(u)=\frac{u^2}{2}$ into
multidimensions (for $d=2,3$), we consider the generalized
inviscid Burgers equation
\begin{align*}
\frac{\partial u}{\partial t} + \nabla \cdot \l \frac{\vec v u^2}{2}\r = 0 \qquad \mbox{in } \Omega \times \R_+.
\end{align*}
The flux function $\mat F(u) = \frac{\vec v u^2}{2}$ is convex and, therefore, the maximal wave speed \eqref{eq:wave-speed} is given by
$\lambda(u,v,\vec n^e) = \max \{u,v\}\left| \sum_{l=1}^d (\vec n^e)_l \right|$.

\subsubsection{One dimensional benchmark}

Once again, we consider the one dimensional case and equip $\Omega=(0,1)$ with periodic boundary conditions.
The initial data \cite{kuzmin-preprint2}
\begin{align}\label{eq:burgers1D}
u_0(x) = \sin(2\pi x)
\end{align}
remains smooth up to the time $t= \frac{1}{2\pi}$ of shock formation at $x=0.5$.
For smaller times, the classical solution can be determined using Newton's method to solve the implicit equation
$u(x,t) = \sin\l2\pi(x-u(x,t)t\r$.

We first run DG, LO and MCL up to $t=0.1$ using $\Delta t \,=\,4\cdot 10^{-4}$ and $p\in \{1,\dots,4\}$.
The $\|\cdot\|_{L^1(\Omega)}$ errors and convergence rates for the three methods are presented in \cref{tab:burgers-dg,tab:burgers-lo,tab:burgers-mcl}.
Our results confirm that the optimal convergence behavior, as observed in the previous section, carries over to the nonlinear case.
In fact, the low order method becomes first order accurate, which is an improvement compared to \cref{tab:lo-advection}.

\begin{table}[ht!]
\centering
\begin{tabular}{||c||c|c||c|c||c|c||c|c||}
\hline
$1/h$ & $p=1$ & EOC & $p=2$ & EOC & $p=3$ & EOC & $p=4$ & EOC\\
\hline
48  & 7.45E-04 &      & 1.60E-05 &      & 7.43E-07 &      & 4.96E-08 & \\
64  & 4.31E-04 & 1.90 & 7.23E-06 & 2.77 & 2.87E-07 & 3.30 & 1.14E-08 & 5.11 \\
96  & 1.98E-04 & 1.92 & 2.42E-06 & 2.70 & 6.69E-08 & 3.59 & 1.66E-09 & 4.76 \\
128 & 1.13E-04 & 1.94 & 1.09E-06 & 2.77 & 2.28E-08 & 3.73 & 4.59E-10 & 4.46 \\
192 & 5.15E-05 & 1.95 & 3.47E-07 & 2.82 & 4.89E-09 & 3.80 &&\\
256 & 2.93E-05 & 1.96 & 1.53E-07 & 2.86 &&&&\\
384 & 1.32E-05 & 1.97 &&&&&&\\
\hline
\end{tabular}
\caption{The $\|\cdot\|_{L^1(\Omega)}$~errors and corresponding EOC of DG-$\mathbb Q_p$, $p \in \{1,\hdots,4\}$ solutions to the 1D Burgers equation with initial condition \eqref{eq:burgers1D}.}
\label{tab:burgers-dg}
\end{table}

\begin{table}[ht!]
\centering
\begin{tabular}{||c||c|c||c|c||c|c||c|c||}
\hline
$1/h$ & $p=1$ & EOC & $p=2$ & EOC & $p=3$ & EOC & $p=4$ & EOC\\
\hline
48  & 1.62E-02 &      & 1.25E-02 &      & 9.10E-03 &      & 7.69E-03 & \\
64  & 1.23E-02 & 0.93 & 9.48E-03 & 0.96 & 6.95E-03 & 0.94 & 5.80E-03 & 0.98 \\
96  & 8.39E-03 & 0.95 & 6.41E-03 & 0.96 & 4.68E-03 & 0.98 & 3.90E-03 & 0.98 \\
128 & 6.37E-03 & 0.96 & 4.83E-03 & 0.98 & 3.52E-03 & 0.99 & 2.94E-03 & 0.99 \\
192 & 4.30E-03 & 0.97 & 3.25E-03 & 0.98 & 2.36E-03 & 0.99 &&\\
256 & 3.24E-03 & 0.98 & 2.45E-03 & 0.99 &&&&\\
384 & 2.17E-03 & 0.99 &&&&&&\\
\hline
\end{tabular}
\caption{The $\|\cdot\|_{L^1(\Omega)}$~errors and corresponding EOC of LO-$\mathbb Q_p$, $p \in \{1,\hdots,4\}$ solutions to the 1D Burgers equation with initial condition \eqref{eq:burgers1D}.}
\label{tab:burgers-lo}
\end{table}

\begin{table}[ht!]
\centering
\begin{tabular}{||c||c|c||c|c||c|c||c|c||}
\hline
$1/h$ & $p=1$ & EOC & $p=2$ & EOC & $p=3$ & EOC & $p=4$ & EOC\\
\hline
48  & 1.29E-03 &      & 2.03E-04 &      & 9.54E-05 &      & 4.87E-05 & \\
64  & 7.68E-04 & 1.80 & 9.98E-05 & 2.47 & 4.85E-05 & 2.36 & 2.46E-05 & 2.38 \\
96  & 3.44E-04 & 1.98 & 4.05E-05 & 2.22 & 1.95E-05 & 2.25 & 9.91E-06 & 2.24 \\
128 & 1.94E-04 & 1.98 & 2.24E-05 & 2.06 & 1.09E-05 & 2.02 & 5.07E-06 & 2.33 \\
192 & 8.41E-05 & 2.07 & 9.23E-06 & 2.19 & 4.26E-06 & 2.32 &&\\
256 & 4.69E-05 & 2.03 & 4.74E-06 & 2.32 &&&&\\
384 & 2.04E-05 & 2.05 &&&&&&\\
\hline
\end{tabular}
\caption{The $\|\cdot\|_{L^1(\Omega)}$~errors and corresponding EOC of MCL-$\mathbb Q_p$, $p \in \{1,\hdots,4\}$ solutions to the 1D Burgers equation with initial condition \eqref{eq:burgers1D}.}
\label{tab:burgers-mcl}
\end{table}

We proceed by computing approximations to this problem at final time $t=0.2$ after the shock has developed.
We use $\Delta t \,=\,2.5\cdot 10^{-3}$, $p \in \{0,1,3,7,15,31\}$ and keep \#DOF\,=\,96 constant for all simulations.
The results of LO and MCL approximations are shown in \cref{fig:burgers1D}.
Before the shock formation time, all approximations are almost indistinguishable, while at time $t=0.2$ only the highest order approximation is considerably more diffusive than other curves in diagrams depicting the LO and MCL results.
This behavior can be attributed to the fact that for $p=31$ the shock is located inside an element rather than at element boundaries, as for all other approximations.
Given the fact that MCL-$\mathbb Q_{31}$ uses just three elements, it is remarkable that the result still looks quite reasonable.
Around the common boundary of the second and third element from the left, the MCL-$\mathbb Q_{31}$ solution exhibits stronger deviations from the other curves but no violations of monotonicity are observed.
For this simple example, LO is almost as accurate as MCL with only minor peak clipping effects close to the shock, which is in accordance with the convergence rates presented in \cref{tab:burgers-lo}.

\begin{figure}[ht!]
\centering
\begin{subfigure}[b]{0.325\textwidth}
\caption{MCL solutions, $t=0.1$.}
\fbox{\includegraphics[scale=0.42]{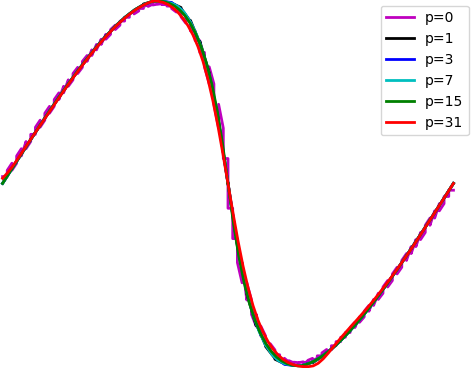}}
\end{subfigure}
\begin{subfigure}[b]{0.325\textwidth}
\caption{LO solutions, $t=0.2$.}
\fbox{\includegraphics[scale=0.42]{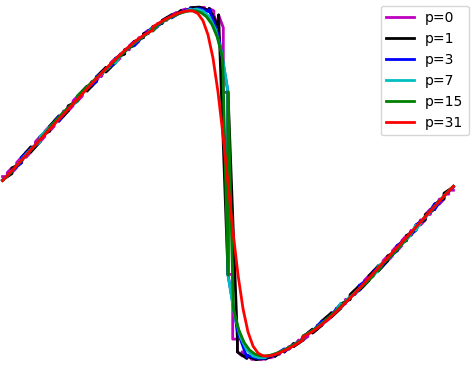}}
\end{subfigure}
\begin{subfigure}[b]{0.325\textwidth}
\caption{MCL solutions, $t=0.2$.}
\fbox{\includegraphics[scale=0.42]{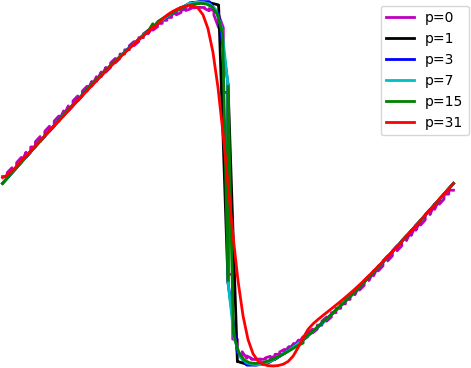}}
\end{subfigure}
\caption{Snapshots of bound preserving $p \in \{0,1,3,7,15,31\}$ approximations to the 1D Burgers equation with initial condition \eqref{eq:burgers1D} at time instants (a) before and (b),(c) after the shock development.}\label{fig:burgers1D}
\end{figure}

\subsubsection{Two dimensional benchmark}

For the 2D Burgers test, we use
$\Omega = (0,1)^2$ and the initial condition
\begin{align}\label{eq:burgers2D}
u_0(x,y) = \begin{cases}
-0.2 & \mbox{if } x < 0.5 \wedge y > 0.5, \\
  -1 & \mbox{if } x > 0.5 \wedge y > 0.5, \\
 0.5 & \mbox{if } x < 0.5 \wedge y < 0.5, \\
 0.8 & \mbox{if } x > 0.5 \wedge y < 0.5.
\end{cases}
\end{align}
The boundary type at a point $(x,y) \in \partial \Omega$ depends on the sign of $u$ which is changing in time.
At the time dependent inlet, we prescribe the analytical solution which can be found in \cite{guermond2014a}.
The invariant set of this benchmark is $\mathcal G = [-1, 0.8]$.
At the final time $t=0.5$, we compare DG-$\mathbb{Q}_p$ results for $p\in \{0,1\}$ to MCL-$\mathbb{Q}_p$ approximations using $p\in \{1,3,7,15\}$.
The snapshots presented in \cref{fig:burgers2D} were obtained using \#DOF\,=\,128$^2$ and time step $\Delta t\,=\,10^{-3}$ in all cases.

\begin{figure}[ht!]
\centering
\begin{subfigure}[b]{0.325\textwidth}
\caption{DG-$\mathbb{Q}_0,~e_1(0.5)\,=\,$1.49E-2\phantom{W}}
\includegraphics[scale=0.153]{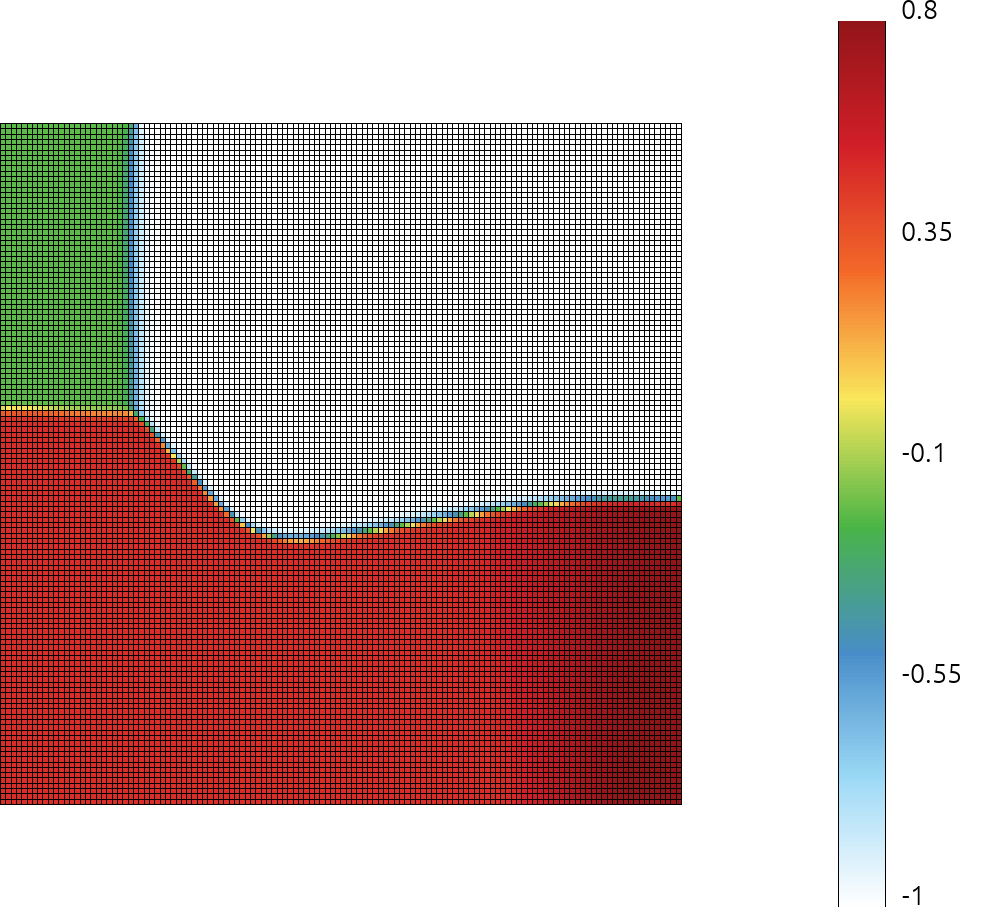}
\end{subfigure}
\begin{subfigure}[b]{0.325\textwidth}
\caption{DG-$\mathbb{Q}_1,~e_1(0.5)\,=\,$9.67E-3\phantom{W}}
\includegraphics[scale=0.153]{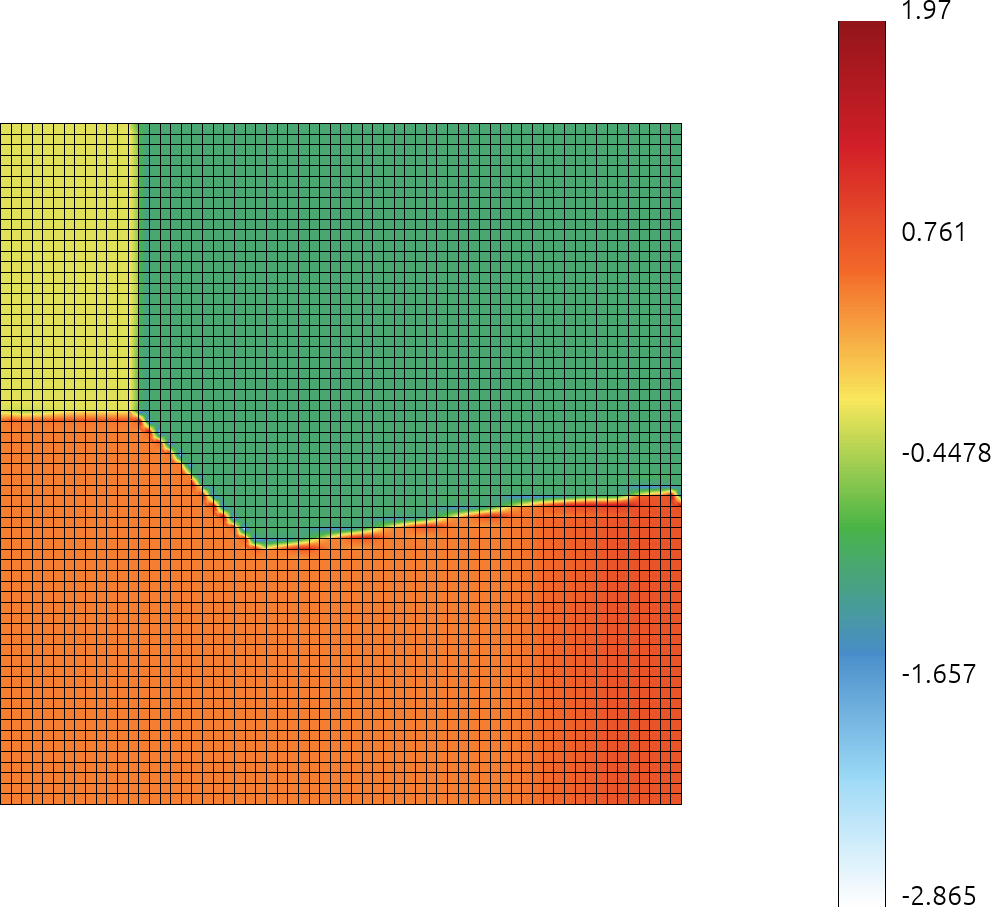}
\end{subfigure}
\begin{subfigure}[b]{0.325\textwidth}
\caption{MCL-$\mathbb{Q}_1,~e_1(0.5)\,=\,$1.09E-2\phantom{o}}
\includegraphics[scale=0.153]{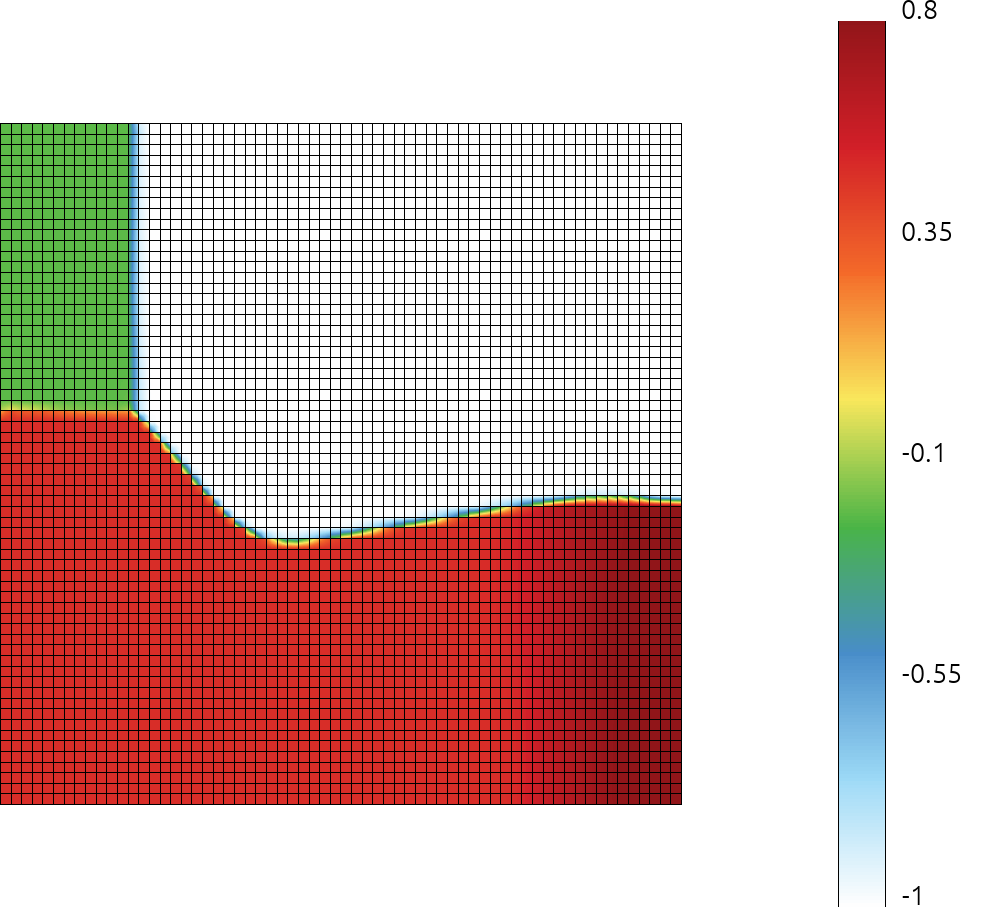}
\end{subfigure}
\vskip5mm
\begin{subfigure}[b]{0.325\textwidth}
\caption{MCL-$\mathbb{Q}_3,~e_1(0.5)\,=\,$1.28E-2\phantom{i}}
\includegraphics[scale=0.153]{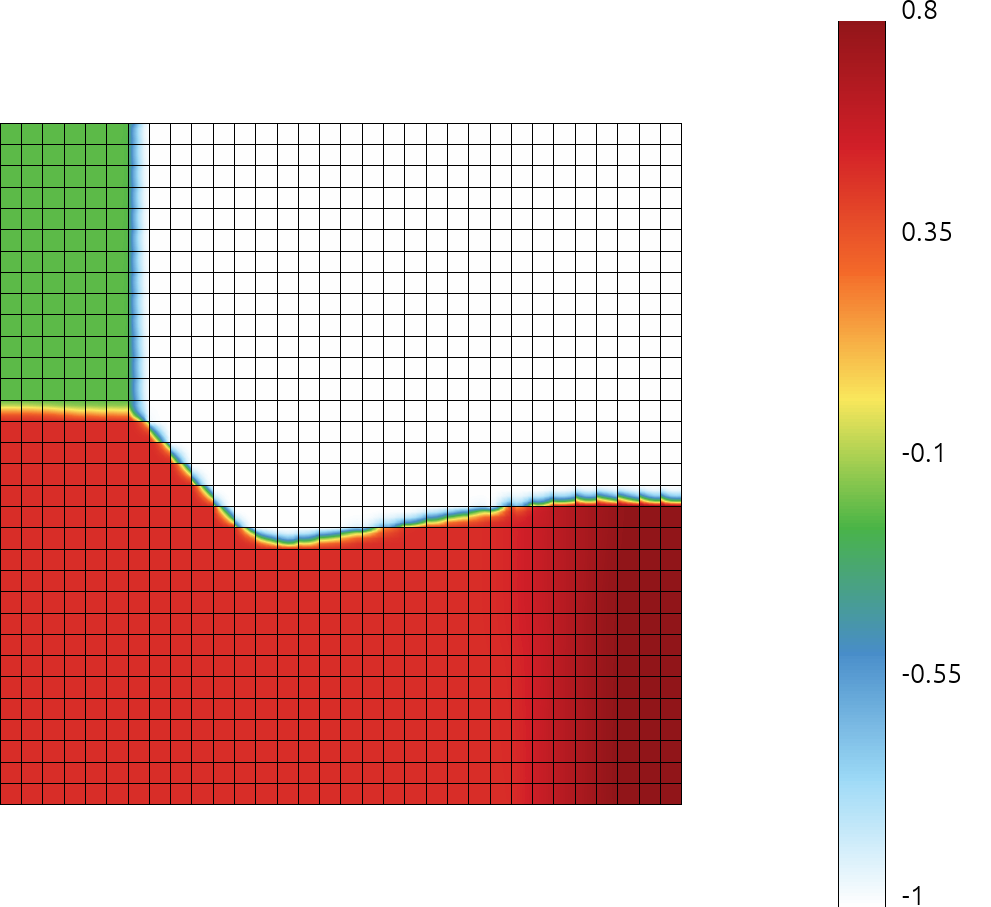}
\end{subfigure}
\begin{subfigure}[b]{0.325\textwidth}
\caption{MCL-$\mathbb{Q}_7,~e_1(0.5)\,=\,$1.73E-2\phantom{o}}
\includegraphics[scale=0.153]{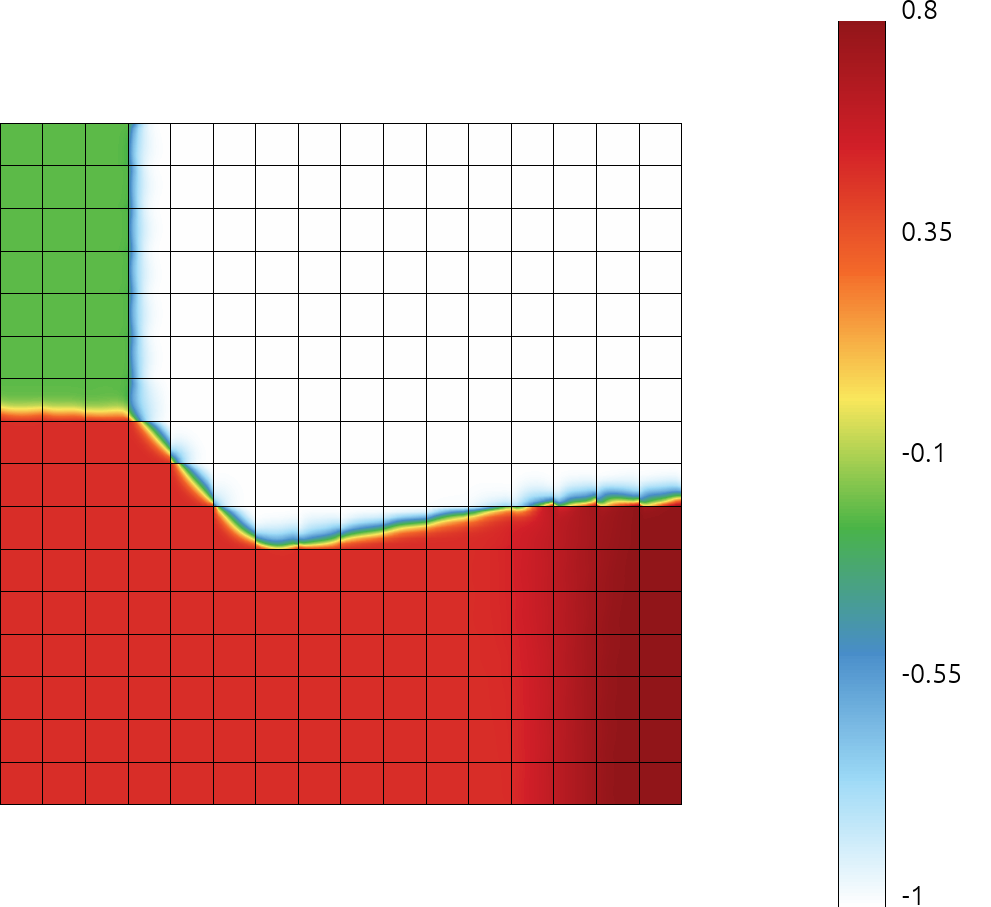}
\end{subfigure}
\begin{subfigure}[b]{0.325\textwidth}
\caption{MCL-$\mathbb{Q}_{15},~e_1(0.5)\,=\,$2.88E-2}
\includegraphics[scale=0.153]{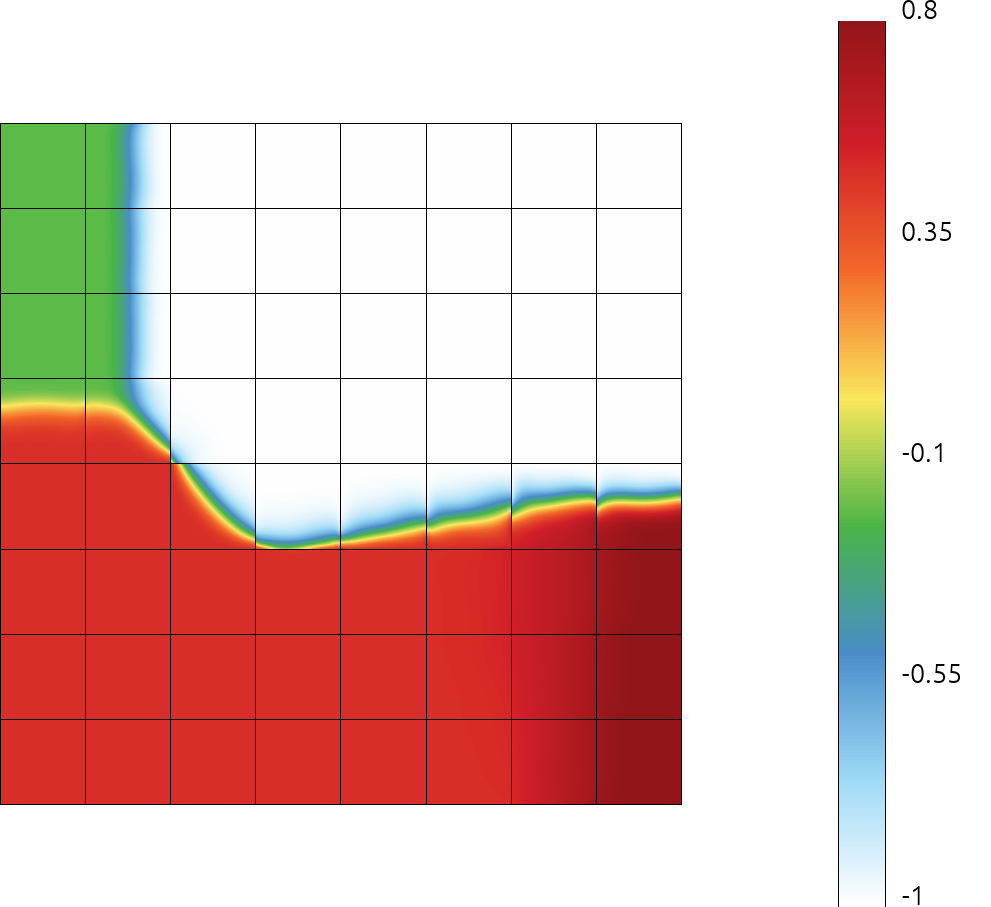}
\end{subfigure}
\caption{DG and MCL solutions of the 2D Burgers equation with initial condition \eqref{eq:burgers2D} and corresponding $e_1(t) \coloneqq \|u(t) - u_h(t) \|_{L^1(\Omega)}$~errors for
  $p \in \{0,1,3,7,15\}$ at $t=0.5$.}\label{fig:burgers2D}
\end{figure}

As expected, the DG-$\mathbb{Q}_0$ result is quite diffusive.
The DG-$\mathbb{Q}_1$ solution exhibits severe over- and undershoots close to the shocks, while all MCL approximations are nonoscillatory.
The $L^1$ error of the MCL-$\mathbb Q_1$ result is just marginally larger than that of the DG-$\mathbb Q_1$ solution.
A decline in the accuracy of MCL-$\mathbb Q_p$ is observed for higher orders on coarser meshes.
For this shock-dominated problem, the MCL-$\mathbb{Q}_1$ solution is optimal because it is monotonicity preserving and has the smallest error for a fixed \#DOF.
The MCL-$\mathbb{Q}_3$ solution is slightly more accurate than the DG-$\mathbb Q_0$ result but the $L^1$ error of the latter approximation is smaller than that of MCL-$\mathbb{Q}_p$ for $p\in \{7,15\}$.
It is still remarkable how well the MCL-$\mathbb{Q}_{15}$ approximation resolves the analytical solution, even though none of the shocks is aligned with mesh edges.

\subsection{Euler equations of gas dynamics}

An important example of \eqref{eq:pde} are the Euler equations of gas dynamics.
The solution vector and flux matrix of this hyperbolic system are given by
\begin{align*}
U = \begin{bmatrix}
\rho \\ \rho \vec v \\ \rho E
\end{bmatrix} \in \R^{d+2}, \qquad
\mat F(U) = \begin{bmatrix}
\rho \vec v \\ \rho \vec v \otimes \vec v + p\, \mat I \\ (\rho E + p) \vec v
\end{bmatrix} \in \R^{(d+2)\times d},
\end{align*}
where $\rho,\,\vec v,\,E$ denote density, velocity and specific total energy, respectively, $\mat I \in \R^{d\times d}$ is the identity matrix and $p$ is the pressure satisfying the equation of state for an ideal polytropic gas
\begin{align*}
p = (\gamma - 1)\l \rho E - \frac{\rho |\vec v|^2}{2} \r = (\gamma - 1) \rho e.
\end{align*}
Here $\rho e$ is the internal energy and $\gamma > 1$ denotes the heat capacity ratio, which is set to $1.4$ in all numerical examples considered in this work.

An upper bound for the fastest wave speed has been derived in \cite{guermond2016a}.
An invariant set of the Euler system is given by \cite{guermond2018}
\begin{align*}
\mathcal G = \lc(\rho,\rho\vec v,\rho E)^T \in \R^{d+2}:~\rho >0, \,e >0,\, s\geq s_{\min}\rc,
\end{align*}
where $s = \log\l e^{\frac{1}{\gamma-1}} \rho^{-1}\r$
is the specific entropy which should be bounded below by an arbitrary constant $s_{\min} >0$.


\subsubsection{Sod Shock tube}\label{sec:sod}

The classical shock tube example by Sod \cite{sod1978} serves as our first test for solving the Euler equations.
The 1D domain $\Omega = (0,1)$ is equipped with wall boundaries at $x\in \{0,1\}$ and the initial condition is specified as
\begin{align*}
U_0(x) = \begin{cases}
(1,0,2.5)^T & \mbox{if } x < 0.5, \\
(0.125,0,0.25)^T & \mbox{if } x > 0.5.
\end{cases}
\end{align*}
The discontinuity of the initial condition at $x=0.5$ gives rise to a shock, rarefaction wave, and contact discontinuity, which a numerical approximation should resolve reasonably well.

\begin{figure}[ht!]
\centering
\begin{subfigure}[b]{0.49\textwidth}
\caption{LO-$\mathbb Q_p$, density $\rho_h \in\, $[0.125, 1.0]
\phantom{WW}}
\fbox{\includegraphics[scale=0.6]{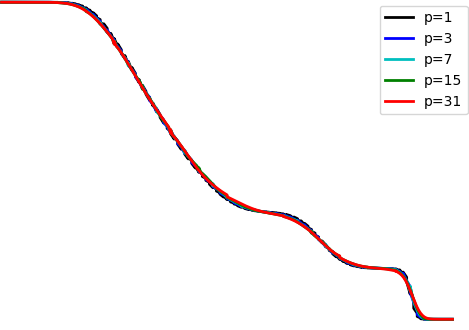}}
\end{subfigure}
\begin{subfigure}[b]{0.49\textwidth}
\caption{LO-$\mathbb Q_p$, pressure $p_h \in\, $[0.1, 1.0]\phantom{WW}}
\fbox{\includegraphics[scale=0.6]{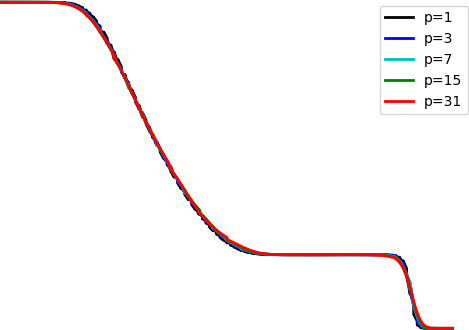}}
\end{subfigure}
\vskip5mm
\begin{subfigure}[b]{0.49\textwidth}
\caption{MCL-$\mathbb Q_p$, density $\rho_h \in\, $[0.125, 1.0]
\phantom{WW}}
\fbox{\includegraphics[scale=0.6]{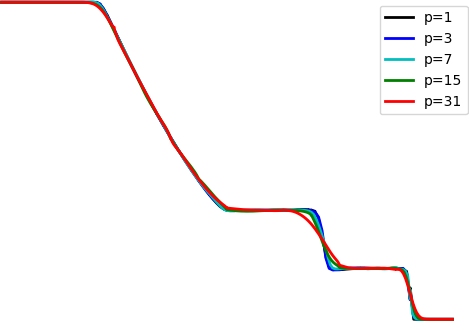}}
\end{subfigure}
\begin{subfigure}[b]{0.49\textwidth}
\caption{MCL-$\mathbb Q_p$, pressure $p_h \in\, $[0.1, 1.0]\phantom{WW}}
\fbox{\includegraphics[scale=0.6]{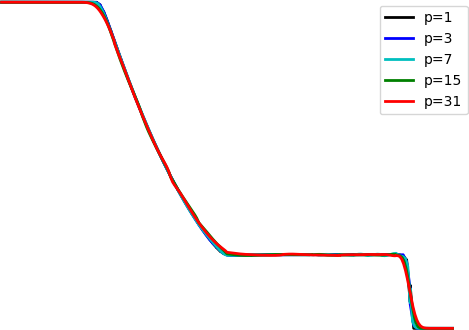}}
\end{subfigure}
\caption{Sod shock tube benchmark for the Euler equations. LO and MCL approximations at time $t=0.231$ obtained using $p \in \{1,3,7,15,31\}$.}\label{fig:sod}
\end{figure}

We evolve the LO and MCL solutions up to the final time $t=0.231$ using $\Delta t \,= \,4\cdot 10^{-4}$, \#DOF\,=\,256, and $p \in \{1,3,7,15,31\}$.
The density and pressure profiles obtained with these parameter settings are shown in \cref{fig:sod}.
They exhibit good qualitative agreement with each other and are all nonoscillatory.
Again, this is a Riemann problem with a discontinuous exact solution.
Therefore, it is quite remarkable that the solutions obtained with large $p$ on coarse meshes are almost as well resolved as the smaller-$p$-same\#DOF approximations.
Note that the MCL-$\mathbb Q_1$ solution is obtained on a mesh with 128 elements, while MCL-$\mathbb Q_{31}$ uses only 8 elements.
The monolithic limiter employed in this study represents the first high order extension of the sequential limiting procedure proposed in \cite{kuzmin2020} to the system case.
So far, only \cite{pazner-preprint} presents algebraically limited high order DG methods for systems, and the limiting strategy adopted therein is not monolithic.

A preliminary conclusion, which has been shown to be true for MCL in the scalar case, is as follows.
For shock-dominated problems and fixed \#DOF, the most accurate MCL results are obtained with low polynomial degrees on fine meshes.
This is not surprising since the potential benefit of using higher order approximations lies in the possibility of achieving improved convergence rates under certain smoothness assumptions.
If these assumptions do not hold, it makes more sense to refine the mesh than to increase the order.
For smooth solutions, the opposite may be the case.
However, to circumvent the second order accuracy barrier for local extremum diminishing approximations, the MCL scheme must be equipped with a smoothness indicator.

\subsubsection{Double Mach reflection}

A more difficult benchmark for solving the Euler equations is double Mach reflection \cite{woodward1984}.
The boundary of the computational domain $\Omega =(0,4)\times (0,1)$ 
consists of a reflecting wall \mbox{$\Gamma_w = \lc(x,0)^T \in \partial \Omega:~1/6 \leq x \leq 4\rc$}, a supersonic outlet $\Gamma_o = \lc (4,y)^T:~0 < y \leq 1\rc$, and a supersonic inlet at $\partial \Omega \setminus (\Gamma_w \cup \Gamma_o)$.
Post- and pre-shock states defined by
\begin{align*}
U_L = \begin{bmatrix}
8\\66\cos\l 30^\circ \r\\ -66 \sin\l 30^\circ \r \\ 563.5 \end{bmatrix}, 
\qquad&
U_R = \begin{bmatrix}
1.4 \\ 0 \\ 0 \\ 2.5
\end{bmatrix},
\end{align*}
are used as initial and inflow boundary conditions in the following manner
\begin{align*}
U_0(x,y) = \begin{cases}
U_L & \mbox{if } x < \frac 1 6 + \frac{y}{\sqrt 3}, \\
U_R & \mbox{otherwise},
\end{cases} \qquad&
U_{\mathrm{in}}(x,y,t) = \begin{cases}
U_L & \mbox{if } x < \frac 1 6 + \frac{y+20t}{\sqrt 3}, \\
U_R & \mbox{otherwise}.
\end{cases}
\end{align*}
The angle between the initial shock and wall boundary is $60^\circ$.
The shock-wall interaction results in a propagating Mach 10 shock.
To assess the performance of numerical schemes for this benchmark, one tries to resolve the triple point visible in the right part of $\Omega$ at time $t=0.2$ as accurately as possible.

We have already established that the most accurate approximations for fixed \#DOF are obtained with piecewise linear polynomials.
To study the impact of \#DOF on the accuracy of our scheme, we therefore compare the MCL-$\mathbb Q_1$ solutions for this benchmark on a series of refined uniform meshes.
On the coarsest level, we use \#DOF\,=\,$4\cdot96^2$, $\Delta t\,=\,5\cdot 10^{-5}$.
The mesh and time step sizes are halved at each refinement level.

\begin{figure}[ht!]
\centering
\begin{subfigure}[b]{\textwidth}
\caption{Post-shock density distribution on a mesh of $4\cdot 48^2$ elements obtained with $\Delta t\,=\,5$E-5.}
\includegraphics[scale=0.18]{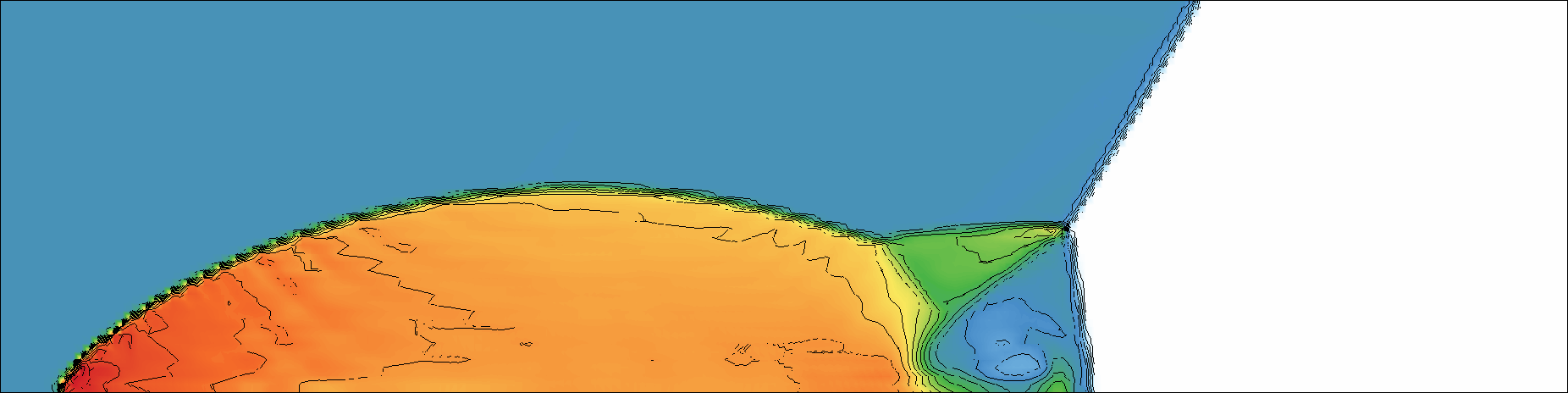}
\fbox{\includegraphics[scale=0.09]{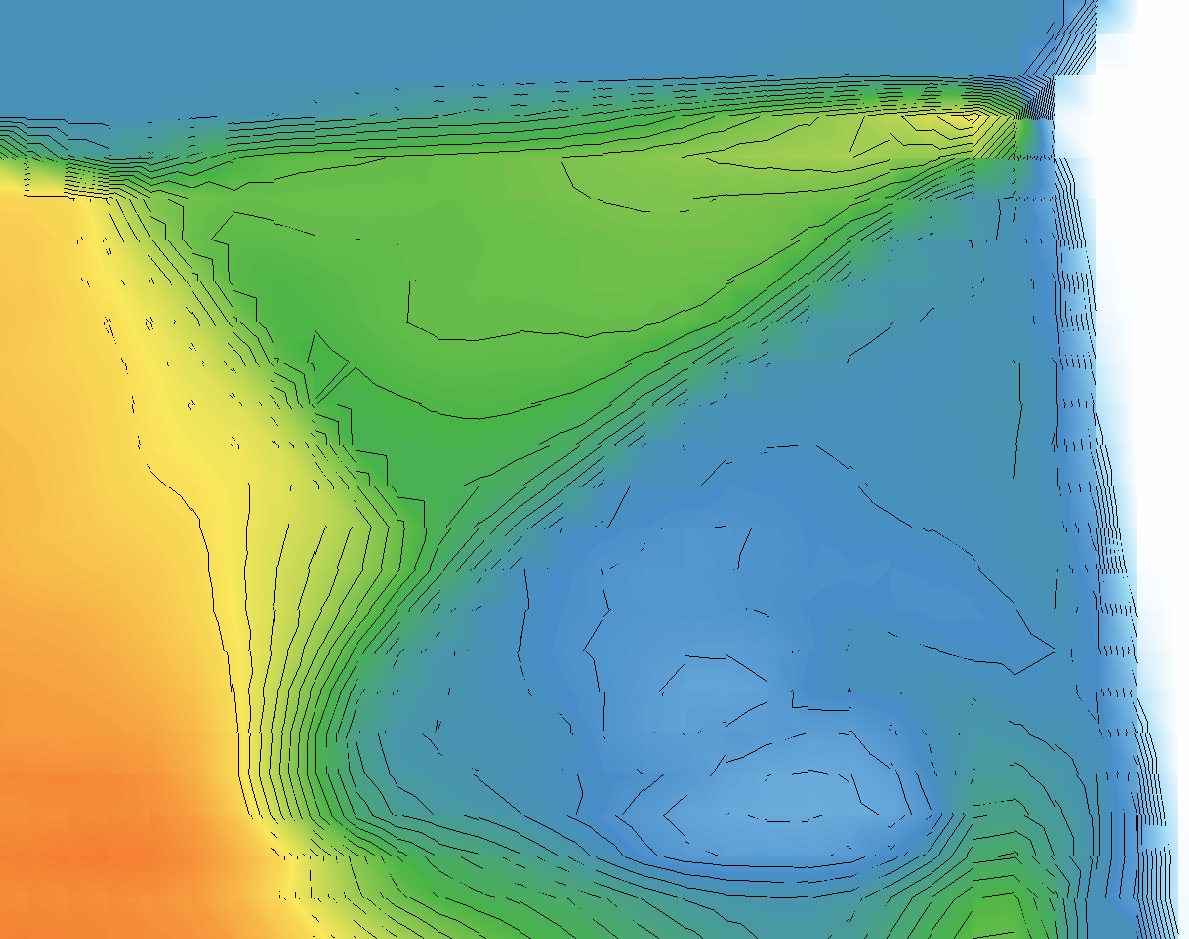}}
\end{subfigure}
\vskip2mm
\begin{subfigure}[b]{\textwidth}
\caption{Post-shock density distribution on a mesh of $4\cdot 96^2$ elements obtained with $\Delta t\,=\,2.5$E-5.}
\includegraphics[scale=0.18]{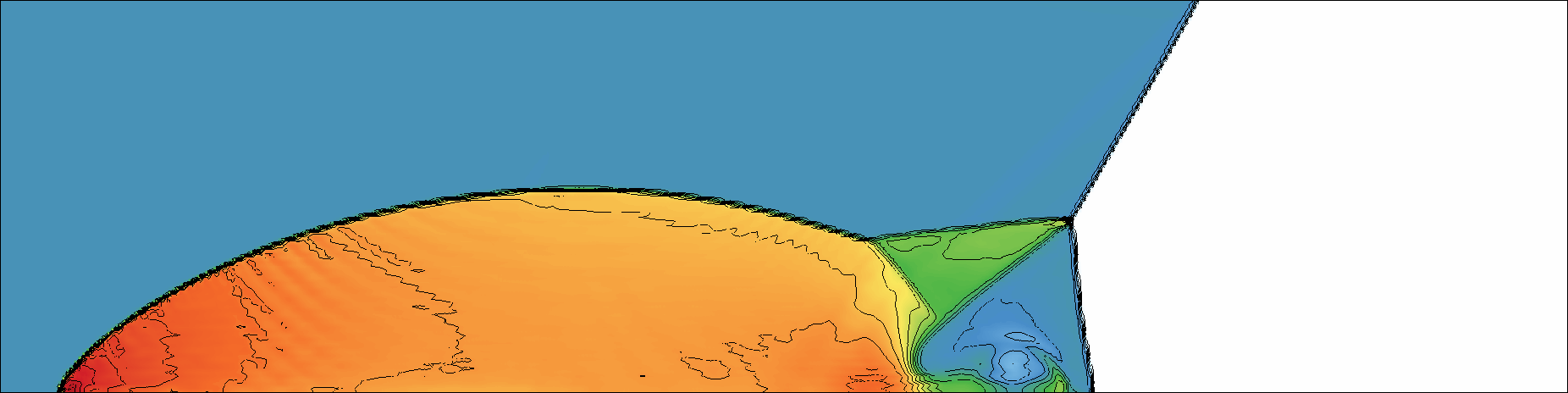}
\fbox{\includegraphics[scale=0.09]{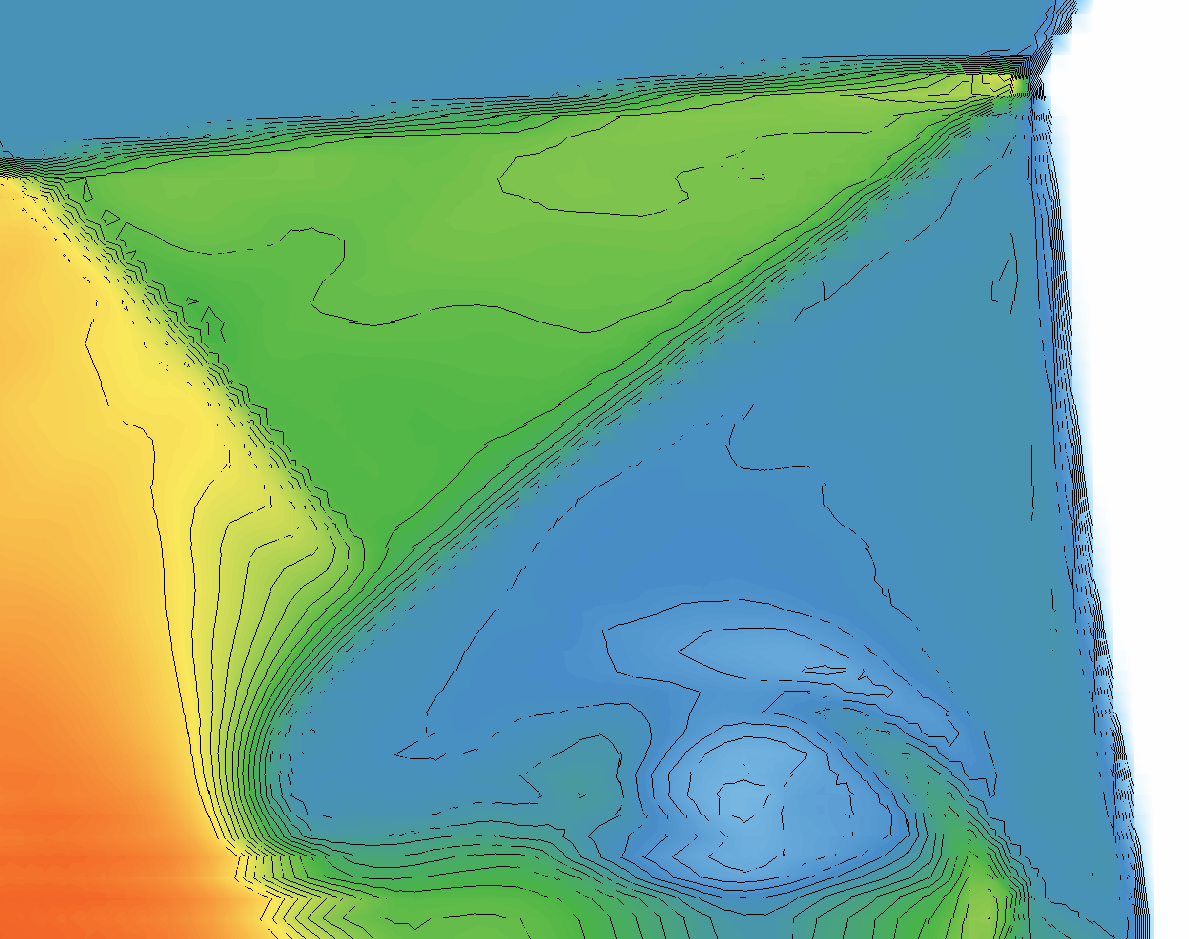}}
\end{subfigure}
\vskip2mm
\begin{subfigure}[b]{\textwidth}
\caption{Post-shock density distribution on a mesh of $4\cdot 192^2$ elements obtained with $\Delta t\,=\,1.25$E-5.}
\includegraphics[scale=0.18]{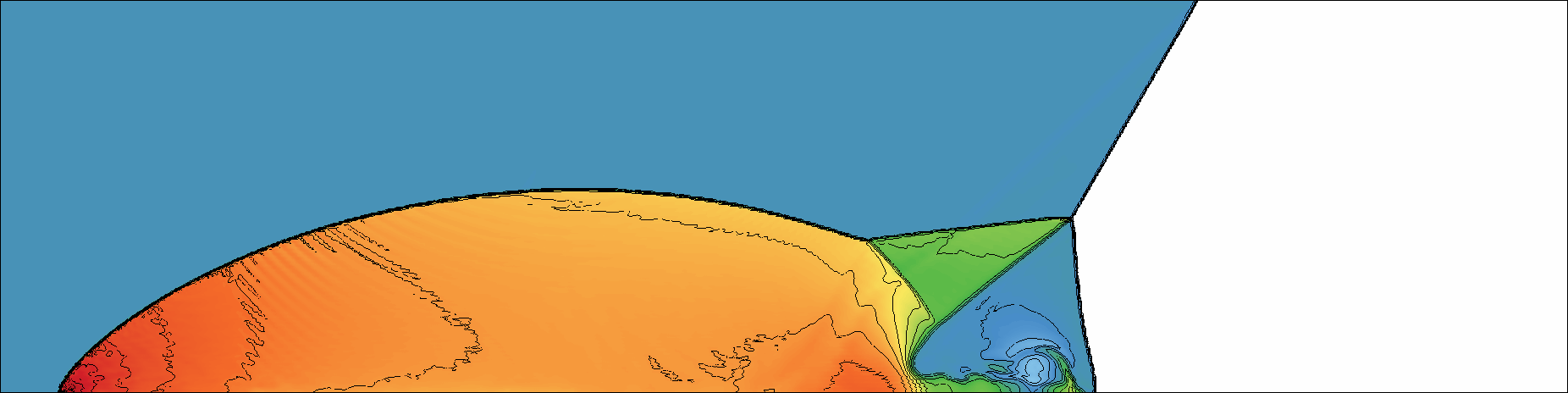}
\fbox{\includegraphics[scale=0.09]{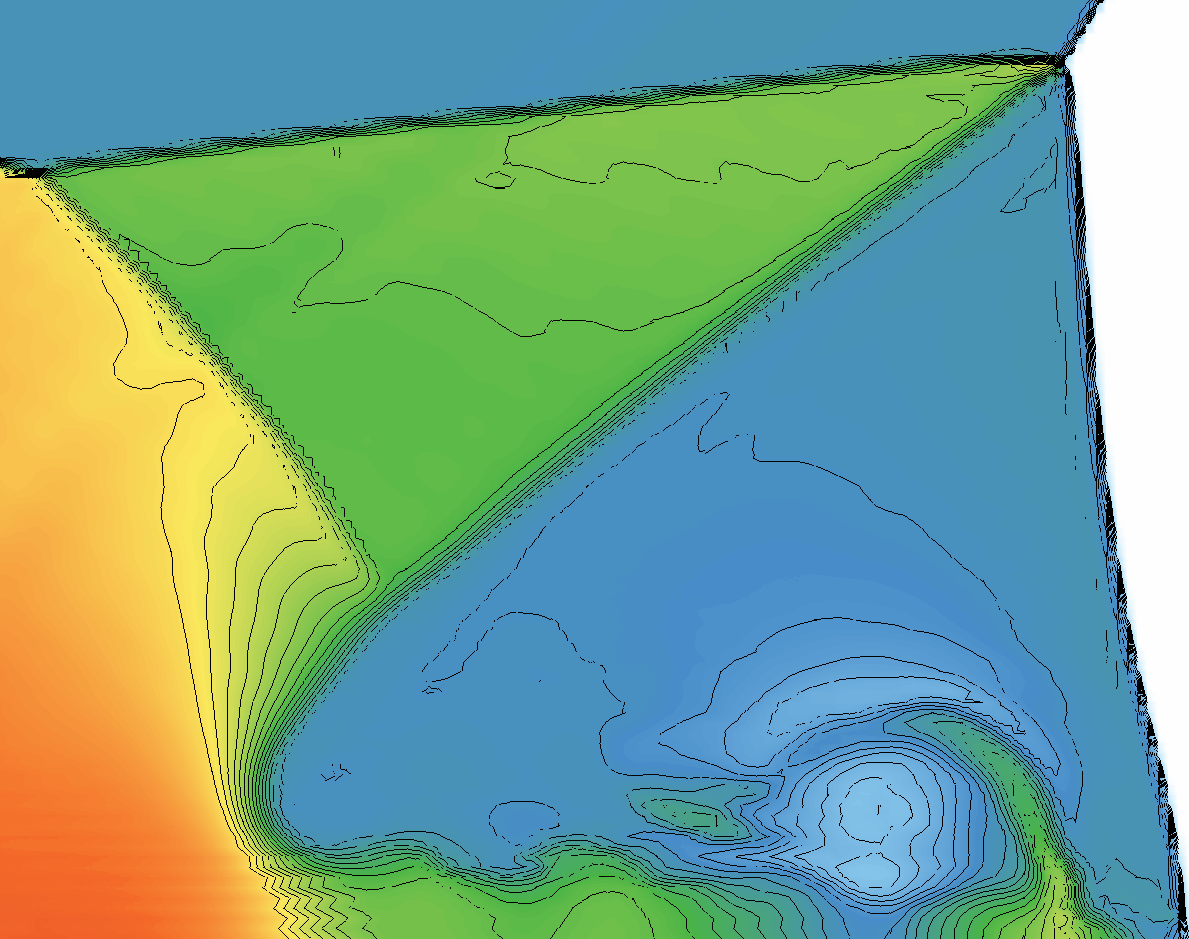}}
\end{subfigure}
\vskip2mm
\begin{subfigure}[b]{\textwidth}
\caption{Post-shock density distribution on a mesh of $4\cdot 384^2$ elements obtained with $\Delta t\,=\,6.25$E-6.}
\label{fig:double-mach-fine}
\includegraphics[scale=0.18]{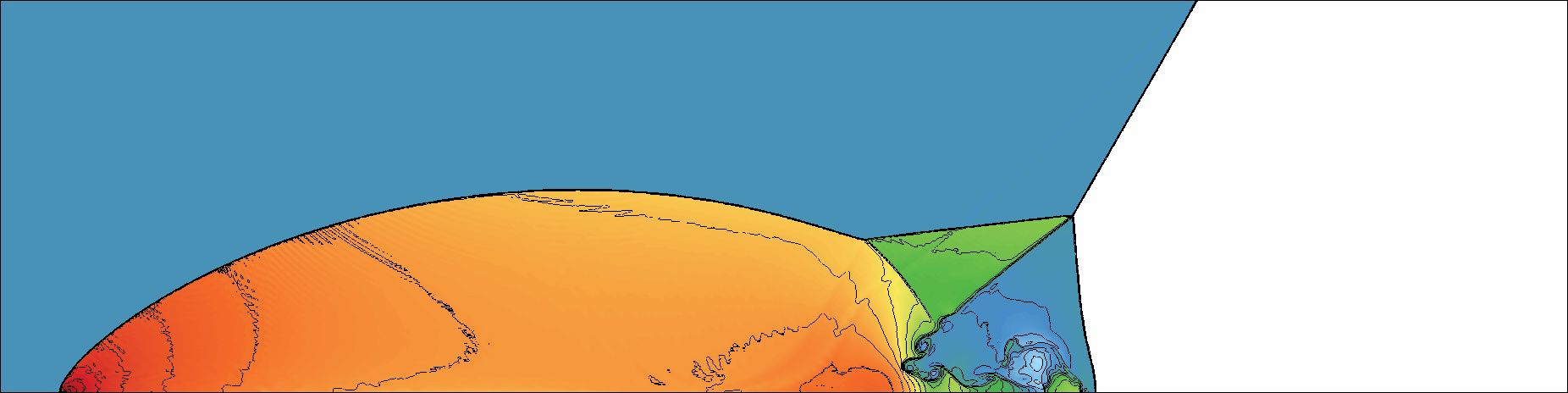}
\fbox{\includegraphics[scale=0.09]{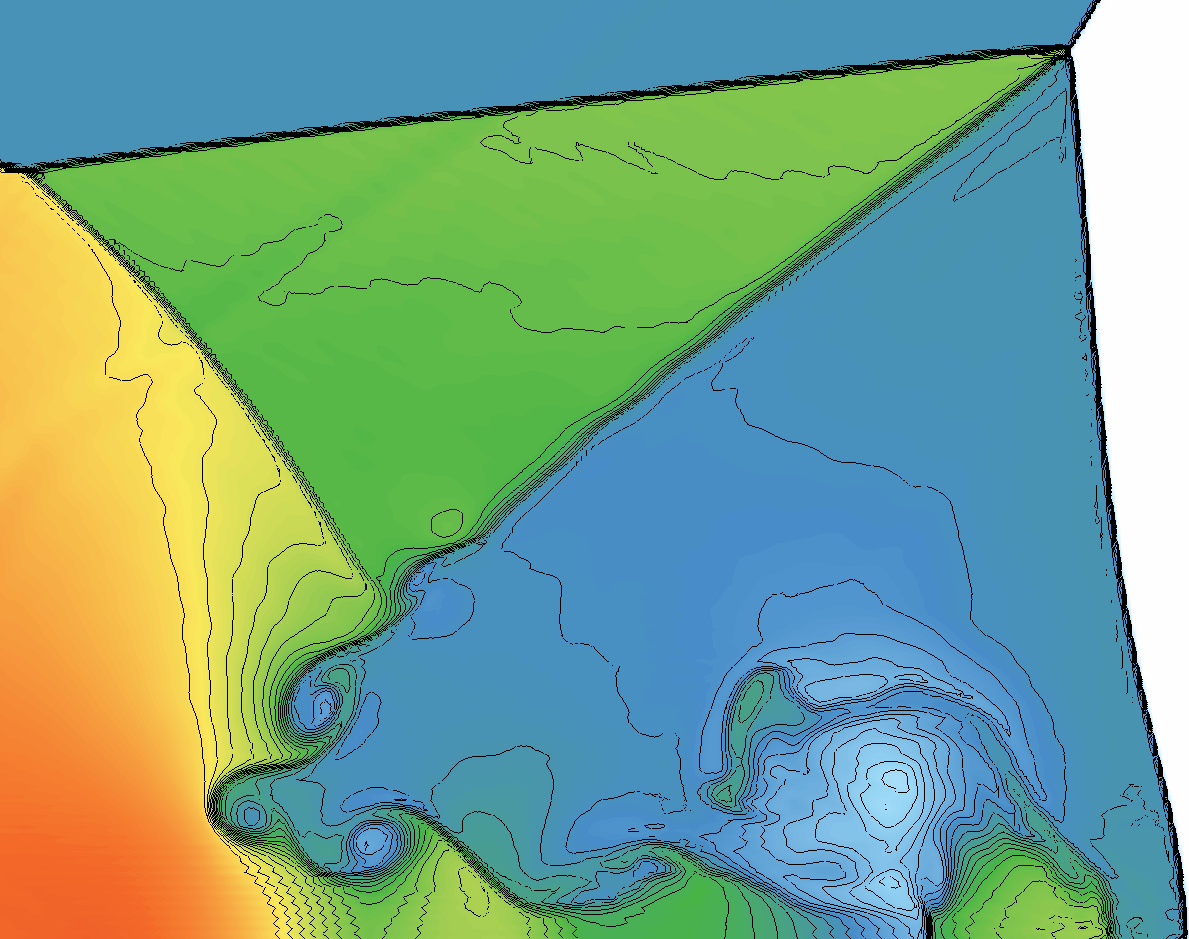}}
\end{subfigure}
\caption{Influence of the mesh size and time step on the accuracy of the
MCL-$\mathbb Q_1$ scheme for the Euler equations, as illustrated
by numerical solutions of the double Mach reflection
problem at time $t=0.2$ with zooms of the triple point region.}
\label{fig:double-mach}
\end{figure}

The snapshots in \cref{fig:double-mach} confirm the conjecture that the MCL limiter is a powerful stabilization technique not only on coarse meshes but also for higher resolution.
For instance, only the finest mesh simulation can capture the small scale eddies in the zoomed region.
On coarser meshes, even the shocks are smeared significantly.
The application of MCL to the DG-$\mathbb Q_1$ target discretization seems to introduce enough numerical dissipation in this example.
Therefore, we were able to run these simulations without the need to deal with negative pressures and perform entropy fixes.
Admittedly, solving this problem with higher order methods does produce negative pressures.
As explained in \cite{kuzmin2020}, positivity preservation can be readily enforced via additional limiting.
This feature makes it possible to solve more challenging problems, so we plan to utilize it in the future.
We expect that running MCL in a safe positivity-preserving mode will enable us to achieve better resolution of the small scale eddies in \cref{fig:double-mach-fine} using higher order spaces on coarser meshes.

\subsection{Shallow water equations}

Finally, we consider the system of shallow water equations
\begin{align*}
u = \begin{bmatrix}
H \\ H \vec v
\end{bmatrix} \in \R^{d+1}, \qquad \mat F(u) = \begin{bmatrix}
H\vec v \\
H \vec v \otimes \vec v + \frac g 2 H^2 \mat I
\end{bmatrix} \in \R^{(d+1)\times d},
\end{align*}
where $H$ denotes the total water height, $\vec v$ is the depth-averaged velocity, and $g$ is the acceleration due to gravity.
Source terms such as bathymetry gradient and friction terms are omitted in order to have a system of the form \eqref{eq:pde}.
An invariant set of the initial value problem for the shallow water equations is
\begin{align*}
\mathcal G = \lc (H,H\vec v)^T \in \R^{d+1}:~H > 0 \rc,
\end{align*}
and an upper bound for the fastest wave speed can be found in \cite{guermond2018a}.

\subsubsection{Radial dam break problem}

This Riemann problem for the shallow water equations in 2D resembles the shock tube example in \cref{sec:sod}.
The whole boundary of the domain $\Omega = (-1,1)^2$ is treated as an outlet (although reflecting wall boundary conditions could also be prescribed).
We choose the initial condition
\begin{align}\label{eq:swe-initial}
U_0(\vec x)= \l H_0, (Hu)_0, (Hv)_0 \r^T= \begin{cases}
(1,0,0)^T & \mbox{if } \left|x\right|  \leq 0.5, \\ (0.1,0,0)^T & \mbox{if } \left|x\right|  > 0.5,
\end{cases}
\end{align}
set $g= 9.81$ and solve the radially symmetric 
Riemann problem up to the time $t=0.06$.
The radial discontinuity of the initial data produces a rarefaction wave which travels towards the domain center, and a radially symmetric shock which propagates in the outward direction.
In contrast to the system of Euler equations, no contact discontinuities appear in the solution for the shallow water equations.

We approximate the solution to this problem on uniform quadrilateral meshes using \#DOF =256$^2$, $p \in \{0,1,3,7\}$ and set $\Delta t\,=\,10^{-4}$ in all tests.
The discrete initial heights $H_h(\cdot,0)$ are obtained by evaluating \eqref{eq:swe-initial} in the Bernstein nodes and using the pointwise values as Bernstein coefficients. 
While this approach is not a projection in the mathematical sense (i.e., $P^2=P$ is violated for $p>1$) and provides only second order accuracy, it is sufficient for this example and has the advantage that it does not introduce violations of maximum principles, as opposed to consistent $L^2$ projections.

\begin{figure}[ht!]
\centering
\begin{subfigure}[b]{0.325\textwidth}
\caption{DG-$\mathbb{Q}_1,H_h(\cdot,0) \in $[0.10,1.00]}
\includegraphics[scale=0.15]{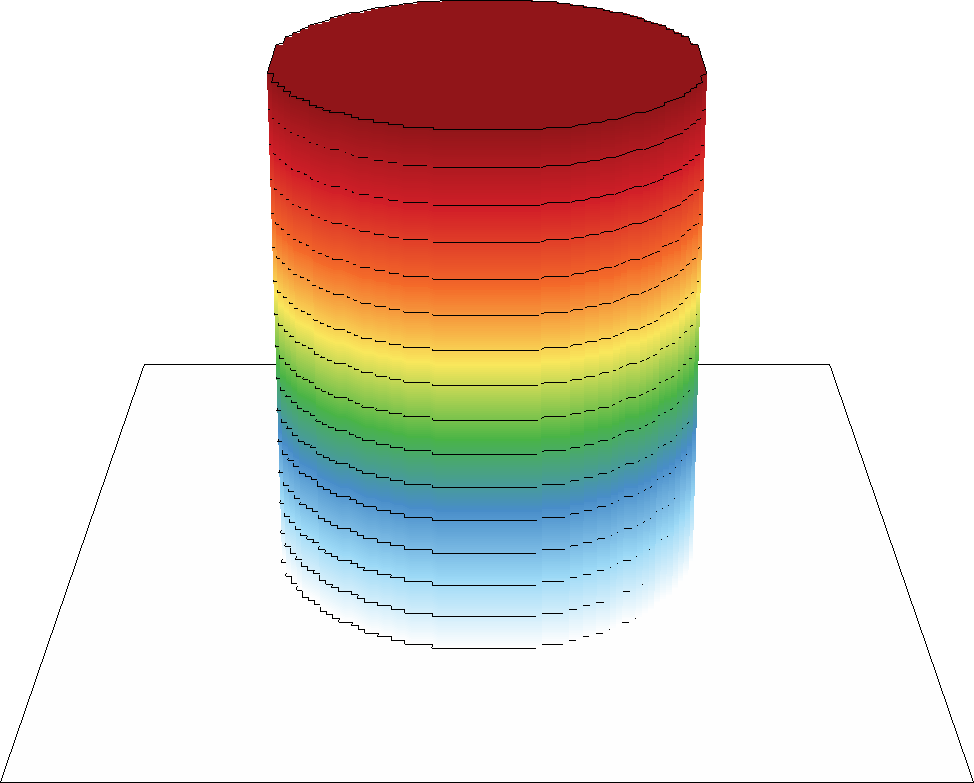}
\end{subfigure}
\begin{subfigure}[b]{0.325\textwidth}
\caption{DG-$\mathbb{Q}_1,~H_h \in\, $[7.98E-3, 1.02]}
\includegraphics[scale=0.15]{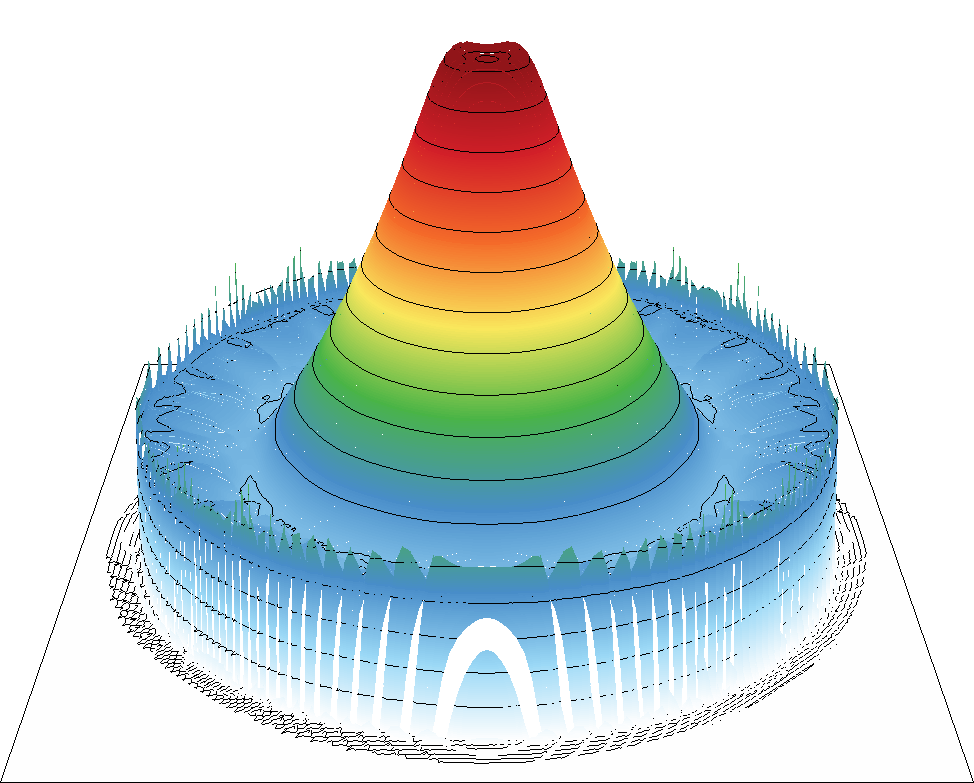}
\end{subfigure}
\begin{subfigure}[b]{0.325\textwidth}
\caption{DG-$\mathbb{Q}_0,~H_h \in\, $[0.10, 0.98]}
\includegraphics[scale=0.15]{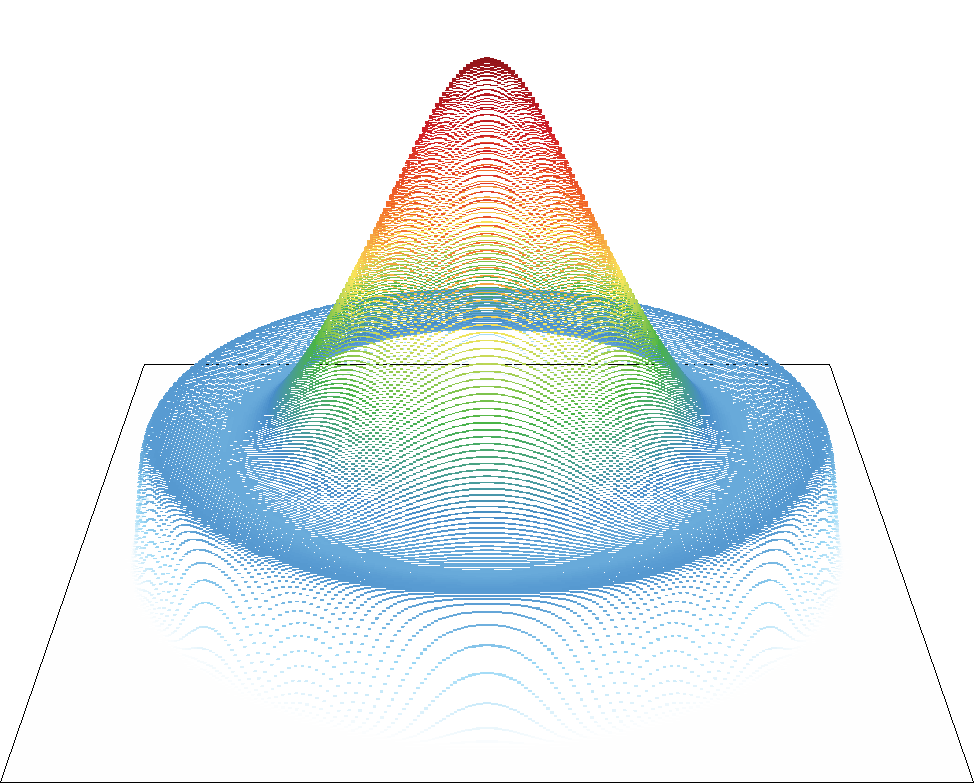}
\end{subfigure}
\vskip5mm
\begin{subfigure}[b]{0.325\textwidth}
\caption{MCL-$\mathbb{Q}_1,~H_h \in\, $[0.10, 1.00]}\label{fig:swe1}
\includegraphics[scale=0.15]{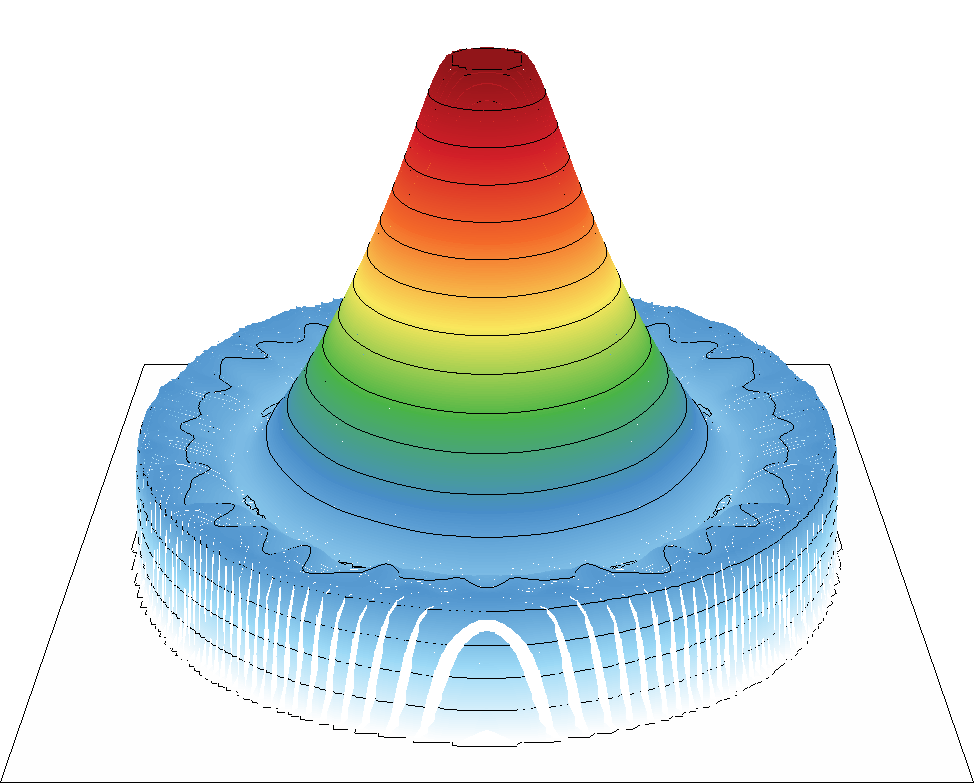}
\end{subfigure}
\begin{subfigure}[b]{0.325\textwidth}
\caption{MCL-$\mathbb{Q}_3,~H_h \in\, $[0.10, 1.00]}
\includegraphics[scale=0.15]{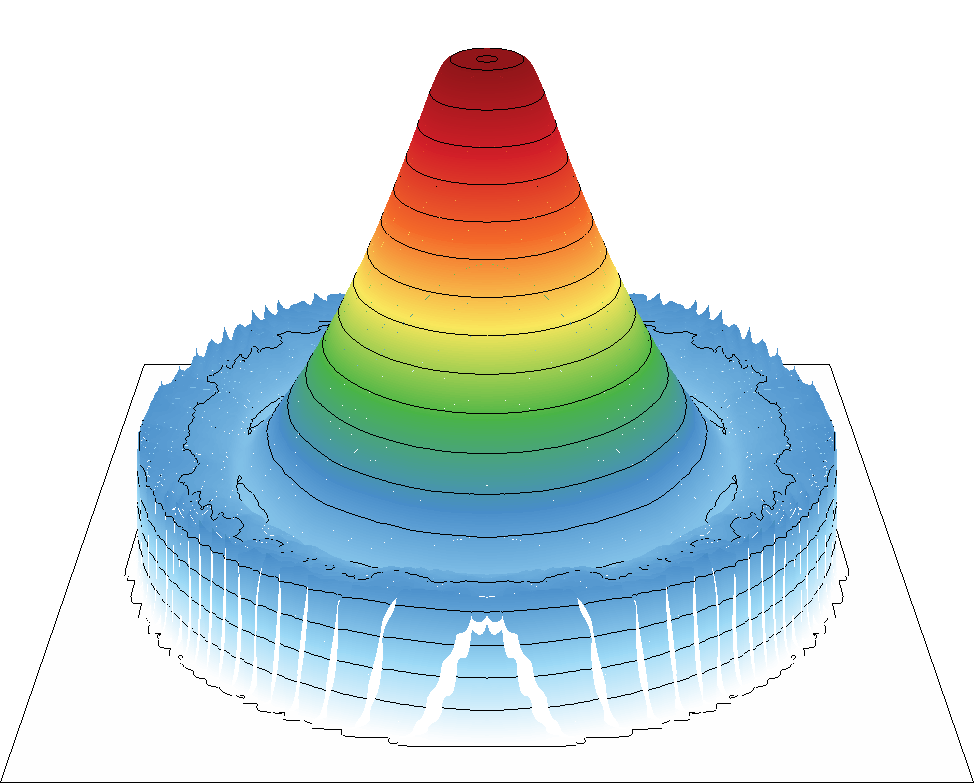}\label{fig:swe3}
\end{subfigure}
\begin{subfigure}[b]{0.325\textwidth}
\caption{MCL-$\mathbb{Q}_7,~H_h \in\, $[0.10, 1.00]}
\includegraphics[scale=0.15]{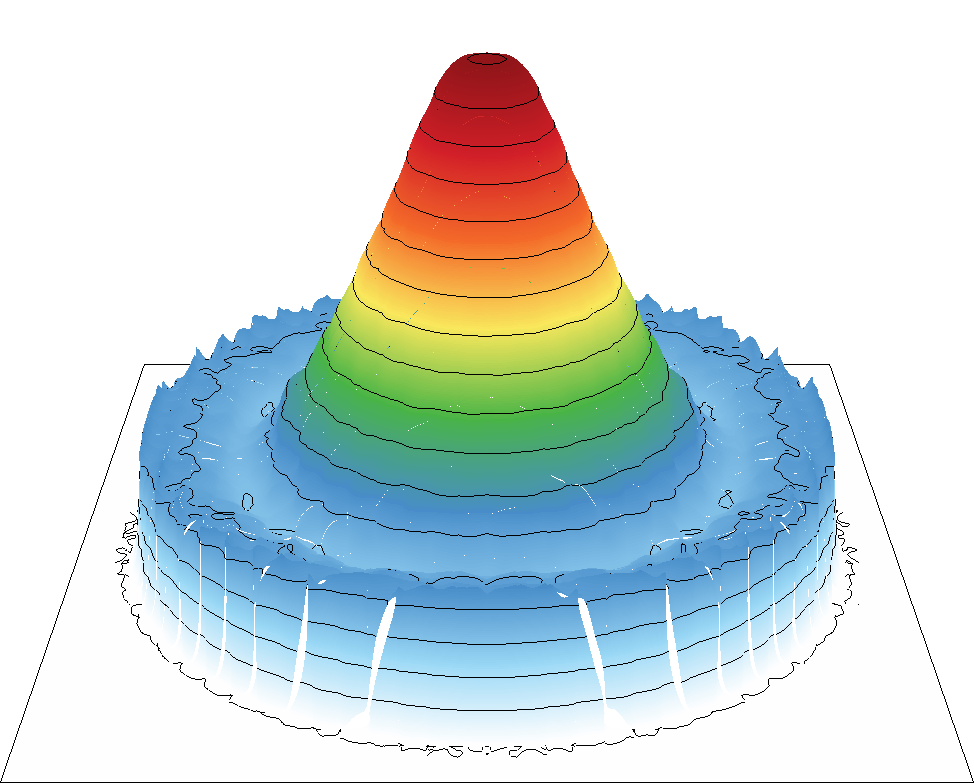}\label{fig:swe7}
\end{subfigure}
\caption{Radial dam break problem for the 2D shallow water equations. The
diagrams show (a) the discrete initial condition $H_h(\cdot,0)$ for the
water height and (b)--(f) numerical approximations
$H_h(\cdot,0.06)$ to the water height at $t=0.06$
obtained with DG and MCL schemes using
$p \in \{0,1,3,7\}$.}\label{fig:swe}
\end{figure}

All of the snapshots presented in \cref{fig:swe} reproduce the propagation of rarefaction and shock waves correctly.
However, for a marginally different setup, the DG-$\mathbb Q_1$ approximation blows up because the oscillations visible along the shock front can cause the algorithm to produce negative water heights and terminate.
Such violations of the IDP property are also visible at the peak of the rarefaction wave but these are clearly not as severe.
The DG-$\mathbb Q_0$ solution yields a rather poor approximation again, while the MCL results combine the advantages of bound preserving and sufficiently accurate approximations.
The ripples observed in the high order approximations \cref{fig:swe3,fig:swe7} are not overshoots but rather remnants of discontinuities on meshes coarser than the one employed in \cref{fig:swe1}. 
Still, due to the lack of smoothness, the most accurate results for a fixed \#DOF are obtained with MCL-$\mathbb Q_1$.

\subsubsection{Narrowing channel}\label{sec:channel}


Finally, we study another shock dominated shallow water problem which was proposed in \cite{zienkiewicz1995}.
The domain 
\begin{align*}
\Omega = \lc(x,y) \in (-10,80)\times (0,40):~ \tan\l \frac{\pi}{36} \r x < y <  40 - \tan \l\frac{\pi}{36}\r x \rc
\end{align*}
is a 40 meters wide and 90 meters long channel.
The lateral wall boundaries are horizontal for the first 10 meters of the channel, after which they are symmetrically constricted with an angle of $5^\circ$.
At the left boundary, the inflow values $U_{\text in} = (1,1,0)^T$ are prescribed.
The right boundary is an outlet, and the remainder of $\partial \Omega$ is a reflecting wall.
The gravitational constant is set to $g = 0.16$, which makes the flow regime supercritical with Froude number $2.5$.
For the initial condition $U_0 = (1,1,0)^T$, shocks start to develop at the boundary constrictions.
They then travel towards the domain center, interact with each other, and are reflected again at the opposite boundary, after which the solution approaches a symmetric steady state.
Since temporal accuracy is irrelevant for such problems, we march numerical solutions to the steady state using the forward Euler method.
Computations are terminated when the stopping criterion
\begin{align}\label{eq:convergence}
r^k \coloneqq \left|U^k - U^{k+1} \right|< 10^{-12}
\end{align}
is satisfied for solution vectors $U^k, \,U^{k+1}$ containing the degrees of freedom for all unknowns at pseudo-time steps $k$ and $k+1$, respectively. 
\begin{remark}
We define $r^k$ in this way to facilitate comparison with limiters for which the stationary nonlinear problem has no well-defined residual.
For monolithic limiters, residual-based stopping criteria are preferable to \eqref{eq:convergence} because stagnation of $U^{k}$ can lead to cancellation effects.
These may result in a termination of the simulation run before the norm of the residual becomes smaller than a prescribed tolerance. If the residual is not decreasing monotonically, one should make sure that it stays as small as desired for a certain number of consecutive steps.
\end{remark}

We solve the narrowing channel problem using pseudo-time step $\Delta t\,=\,0.025$
on an unstructured triangular mesh consisting of E\,=\,12,620 elements.
In \cref{fig:channel}, we display the distribution of the water height $H_h$ at steady state along with the evolution of the convergence indicators $r^k$ for DG-$\mathbb P_1$, LO-$\mathbb P_1$, and MCL-$\mathbb P_1$.
As in other numerical examples, the unlimited solution violates maximum principles, while the low order method smears the shocks almost beyond recognition in this test.
The MCL solution is nonoscillatory and exhibits crisp resolution of shocks, as well as good symmetry preservation properties even on this unstructured mesh which is not aligned with the shock angles.
As expected, DG-$\mathbb P_1$ and LO-$\mathbb P_1$ converge to steady state.
Remarkably, so does MCL-$\mathbb P_1$, which makes it the first DG limiter of its kind.
The fastest convergence is achieved with the unlimited DG scheme.
However, the number of pseudo-time steps for the LO and MCL calculations is still reasonable.

\begin{figure}[ht!]
\centering
\begin{subfigure}[b]{0.49\textwidth}
\caption{DG-$\mathbb{P}_1,~H_h \in\, $[0.81, 1.96]}
\includegraphics[scale=0.12]{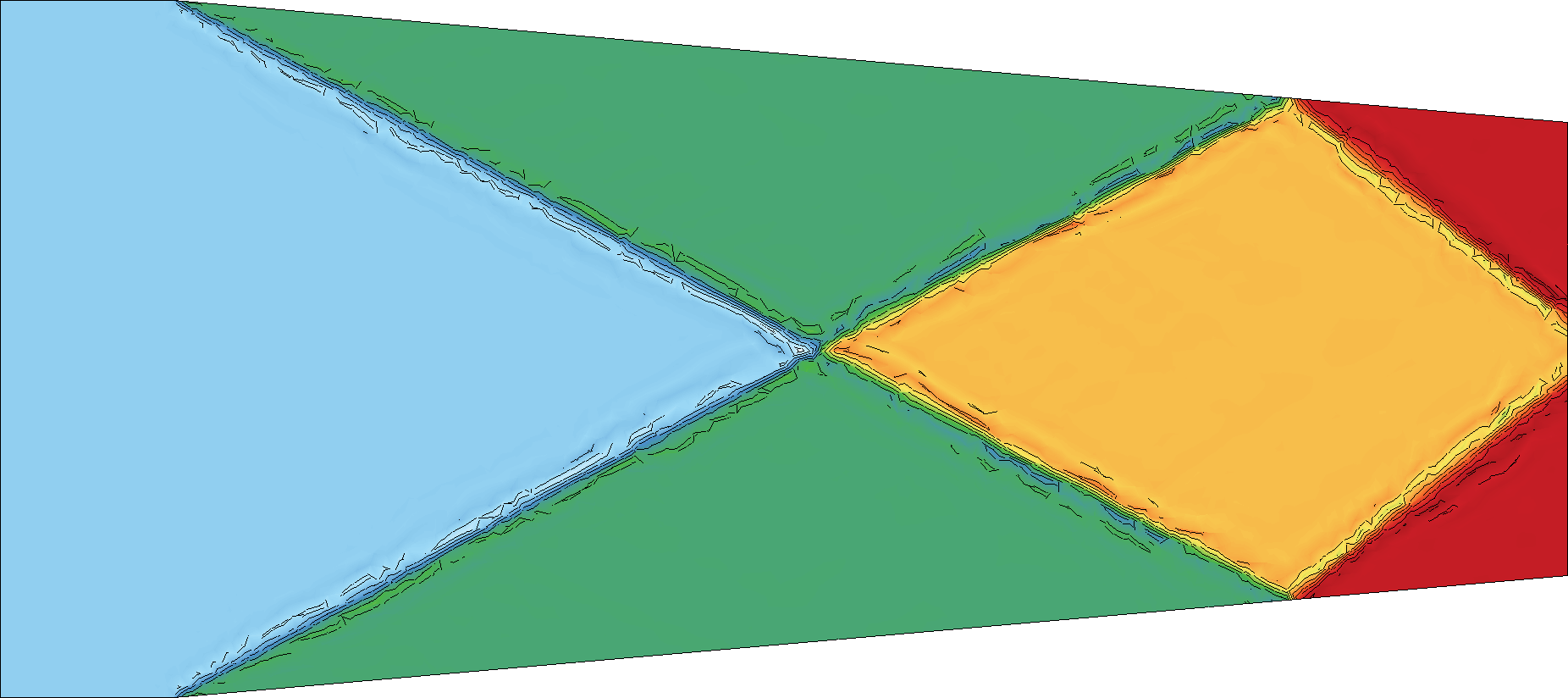}
\end{subfigure}
\begin{subfigure}[b]{0.49\textwidth}
\caption{LO-$\mathbb{P}_1,~H_h \in\, $[1.00, 1.83]}
\includegraphics[scale=0.12]{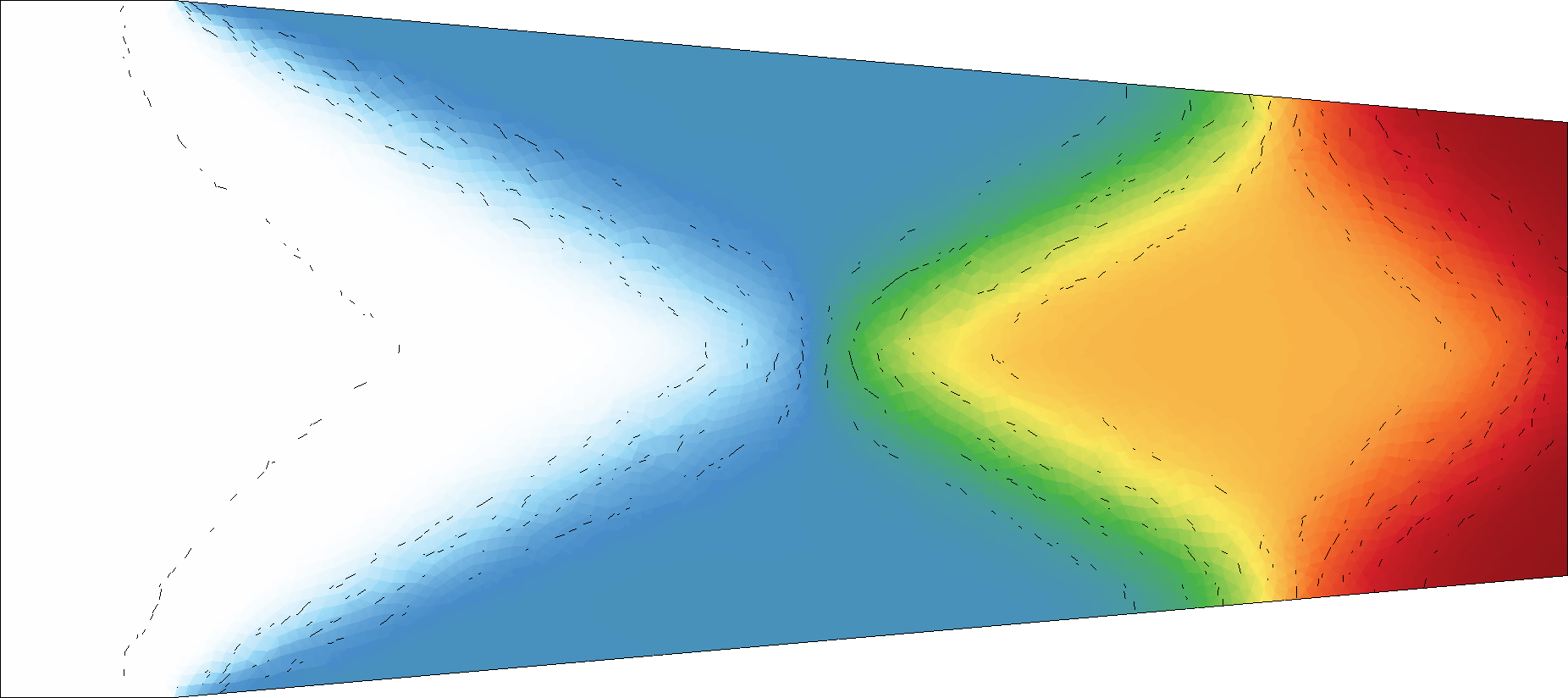}
\end{subfigure}
\vskip5mm
\begin{subfigure}[b]{0.49\textwidth}
\caption{MCL-$\mathbb{P}_1,~H_h \in\, $[1.00, 1.85]}
\includegraphics[scale=0.12]{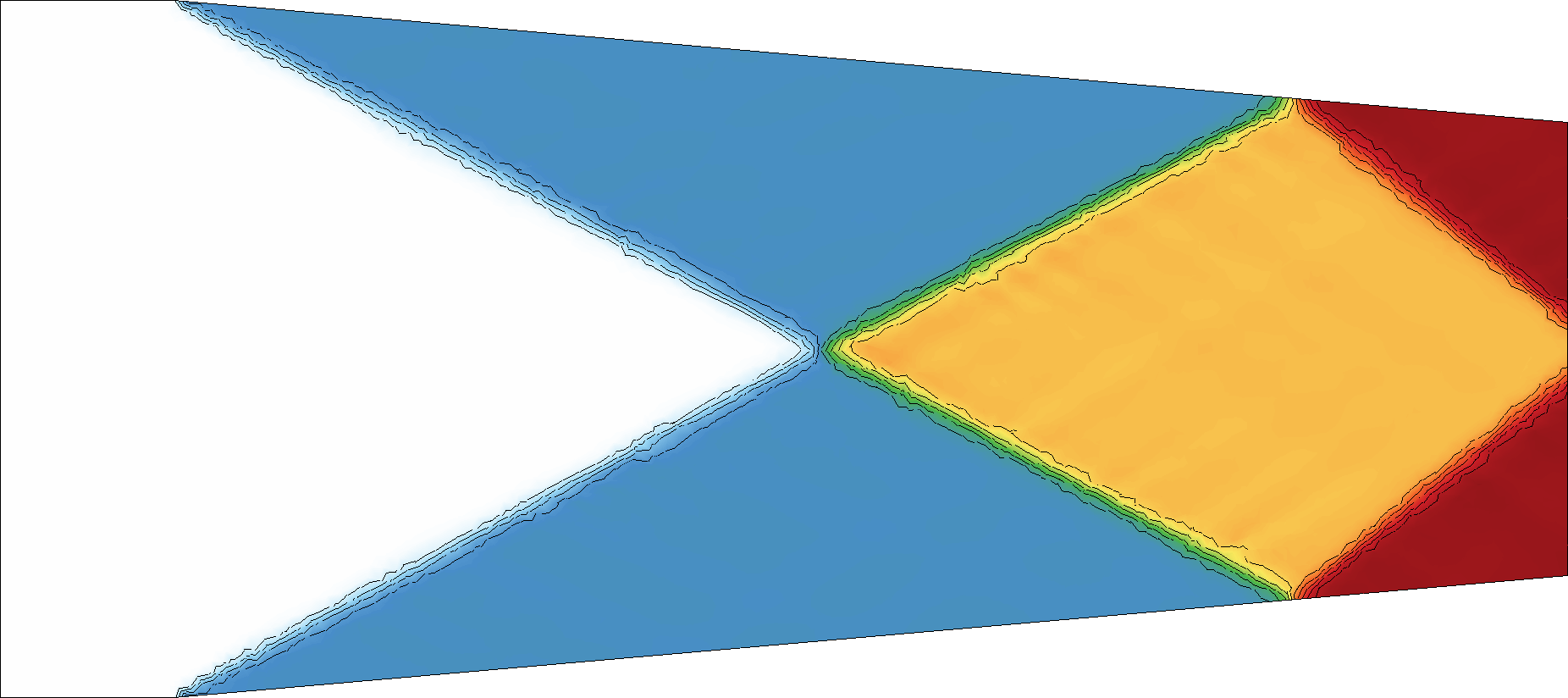}
\end{subfigure}
\begin{subfigure}[b]{0.49\textwidth}
\caption{Evolution of $r^k$ in pseudo time}\label{fig:conv}
\includegraphics[scale=0.146]{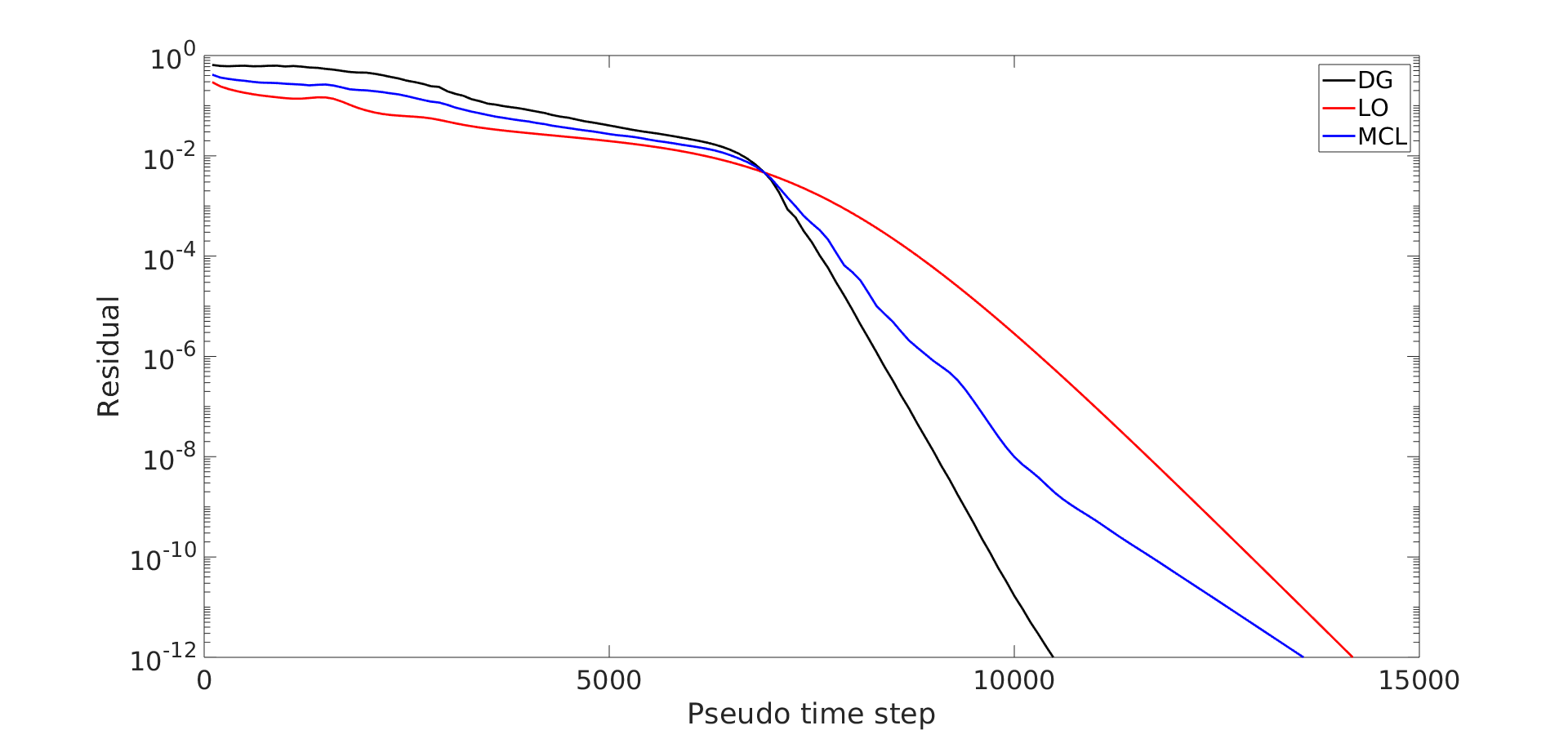}
\end{subfigure}
\caption{Constricted channel flow for the shallow water equations, water height $H_h$ at steady state obtained with (a) DG-$\mathbb P_1$, (b) LO-$\mathbb P_1$, (c) MCL-$\mathbb P_1$. Diagram (d) plots the convergence indicators $r^k$ vs. pseudo time using a logarithmic scale.}\label{fig:channel}
\end{figure}

To further motivate the use of monolithic limiters, we solved the narrowing channel problem with another code for the shallow water equations, a simulation tool based on the open source MATLAB/GNU Octave package FESTUNG \cite{festung}.
Its DG solver is equipped with a vertex-based slope limiter \cite{kuzmin2010} which can be applied (i) just to the water height, (ii) to the water height and momentum separately or (iii) to the water height and momentum sequentially (as in \cite{dobrev2017,hajduk2019}). 
In contrast to MCL, none of these limiting procedures was even close to satisfying the stopping criterion \eqref{eq:convergence}.
Only the low order method (corresponding to DG-$\mathbb P_0$) as well as the unlimited solution (corresponding to DG-$\mathbb P_1$) did converge.
This outcome is not surprising since the underlying slope limiters are used as postprocessing tools that are algebraically equivalent to localized FCT algorithms \cite{dobrev2017,lohmann2017}.
For such methods, convergence to steady-state solutions can not be expected since the residuals depend on the time step size of the pseudo-time stepping method.

We admit that we have also observed stagnation of the convergence indicators $r^k$ for MCL-$\mathbb P_p$ with $p>1$.
This disappointing result motivates further theoretical and numerical studies of monolithic limiters for high order DG methods.
The ability of the MCL-$\mathbb P_1$ scheme to produce a fully converged steady state solution on an unstructured mesh indicates that steady state convergence can also be achieved with high order extensions if the local bounds are properly chosen and forward Euler pseudo-time stepping is replaced with more advanced iterative solvers for the stationary nonlinear problem.


\section{Conclusions}\label{sec:conclusion}

The presented research indicates that monolithic convex limiting is a promising algebraic approach to designing new high order property-preserving DG schemes for hyperbolic conservation laws and systems thereof.
We have shown that some components of existing limiting techniques for continuous finite elements require careful generalizations in the DG setting.
The accuracy and robustness of our method were demonstrated in numerical studies for scalar equations and systems alike.
Thus, we believe that it is suitable for a large class of hyperbolic problems.
To the author's best knowledge, the proposed approach constitutes the first monolithic limiting tool for very high order DG methods.
The fact that we were able to obtain a fully converged steady state solution of the shallow water equations with the MCL-$\mathbb P_1$ version on an unstructured mesh is a very encouraging result as well.

Recent efforts aimed at the development of limiters for high order finite element approximations \cite{anderson2017,hajduk2020a,lohmann2017,pazner-preprint} are strongly motivated by the fact that they are ideally suited for high-performance computing.
The presented limiting methodology can be readily combined with DG discretizations of any order, as illustrated by numerical examples using polynomial degrees of up to $p=31$.
For shock-dominated benchmarks considered in this work, piecewise linear approximations were found to outperform higher order methods using the same number of unknowns.
However, local $p$-refinement may be more beneficial than local $h$-refinement in smooth regions.
Since $hp$ adaptivity is relatively easy to implement for DG methods, we envisage that the proposed methodology may be used to enforce preservation of local bounds in this context.
Employing a smoothness indicator that restricts the use of limiting to `troubled' cells (cf. \cite{dumbser2014,guermond2018,hajduk2020b,pazner-preprint,vilar2019}) would reduce the computational cost and further improve the capacity of our method to achieve optimal accuracy while preserving important qualitative properties of the exact solution.

The proposed schemes performed reasonably
well for all benchmarks considered in the
presented study. However, when it comes to solving nonlinear conservation
laws with nonunique solutions, additional difficulties can arise even if
all local maximum principles are satisfied. Nonphysical behavior of
limited high-order solutions is frequently observed, e.g., for
the KPP problem \cite{kurganov2007} or the Woodward-Colella blast
wave benchmark \cite{woodward1984}. In most cases, it can be cured
by performing
additional limiting to enforce entropy conditions (cf.
\cite{kuzmin-preprint3,kuzmin-preprint1,kuzmin-preprint2})
and/or positivity preservation (cf.
\cite{dobrev2017,hajduk2019,kuzmin2020}).

\section*{Acknowledgments}
The author would like to thank Prof. Dmitri Kuzmin (TU Dortmund University) for many insightful suggestions and supporting this effort in general.
Special thanks go also to Dr. Manuel Quezada de Luna (King Abdullah University of Science and Technology) for many fruitful discussions.

\appendix
\section{Box element dissipation along Cartesian cross stencil}
Lohmann et al. \cite{lohmann2017} proved that the application of their preconditioner to the 1D element matrix of the discretized advection operator reduces the stencil of the high order Bernstein finite element approximation to the set of nearest neighbors in the case of constant velocity.
The sparsity of $\tilde{\vec C}^e$ in 1D follows from this result.
Kuzmin and Quezada de Luna \cite{kuzmin2020a} generalized it to multiple dimensions and simplicial elements.
In the case of box elements, their analysis can be further refined by showing that the sparse matrices $\tilde C_k^e$ have compact Cartesian cross stencils, i.e., that their entries associated with pairs of diagonal nearest neighbors are zero.
This remarkable property was already mentioned in \cref{rem:cross}.
In this appendix, we prove it under the assumption that the element Jacobian $\mat J^e$ is constant or is approximated by a constant matrix.
If this assumption is not satisfied, then neither $\tilde C_k^e$ nor $D^e$ are sparse but the magnitude of their entries corresponding to pairs of far away neighbors is small compared to pairs of nearest neighbors.

Without loss of generality, we assume lexicographical ordering of degrees of freedom within the element. 
Let us first consider the reference element $\hat K$ and the associated basis functions $\hat \phi_i$ which are products of 1D Bernstein polynomials $\hat b_{i_l}$ of degree $p$ (this superscript is omitted for readability)
\begin{align*}
\hat \phi_i(\hat{\vec x}) = \prod_{l=1}^d \hat b_{i_l}(\hat x_l), \qquad
\frac{\partial \phi_i(\hat{\vec x})}{\partial \hat x_k} = \hat b_{i_k}^\prime(\hat x_k)\prod_{\genfrac{}{}{0pt}{}{l=1}{l\neq k}}^d \hat b_{i_l}(\hat x_l).
\end{align*}
The reference element matrices $\hat{\vec C} = \l \hat C_1,\dots, \hat C_d \r$ of the (consistent) discrete gradient operator are given by
\begin{align*}
\l\hat C_k\r_{ij} \coloneqq \int_{\hat K} \hat \phi_i \frac{\partial \hat \phi_j}{\partial \hat x_k} \dhx = \int_0^1 \hat b_{i_k}(\hat x_k) \hat b_{j_k}^\prime(\hat x_k) \d\hat x_k \prod_{\genfrac{}{}{0pt}{}{l=1}{l\neq k}}^d \int_0^1 \hat b_{i_l}(\hat x_l) \hat b_{j_l}(\hat x_l) \d\hat x_l.
\end{align*}
Let $\mat I \in \R^{(p+1)\times(p+1)}$ be the identity matrix.
Then $M_C^e,\,\hat C_k$ can be computed from Kronecker products, denoted by $\otimes$, of their 1D counterparts $\hat M_C^{1D},\,\hat C^{1D} \in \R^{(p+1)\times (p+1)}$
as follows:
\begin{align*}
&\l \hat M_C^{1D} \r_{ij} = \int_0^1 \hat b_i(\hat x) \hat b_j(\hat x) \d\hat x, \qquad
\l \hat C^{1D} \r_{ij} = \int_0^1 \hat b_i(\hat x) \hat b_j^\prime(\hat x) \d\hat x, \\
&M_C^e = |K^e| \underbrace{\hat M_C^{1D}\otimes \dots \otimes  \hat M_C^{1D}}_{d~\mathrm{times}}, \qquad
M_L^e = \frac{|K^e|}{(p+1)^d} \underbrace{\mat I \otimes \dots \otimes \mat I}_{d~\mathrm{times}}, \\
&\hat C_k = \underbrace{\hat M_C^{1D}\otimes \dots \otimes  \hat M_C^{1D}}_{(d-k)~\mathrm{times}} \otimes\; \hat C^{1D} \otimes
\underbrace{\hat M_C^{1D}\otimes \dots \otimes  \hat M_C^{1D}}_{(k-1)~\mathrm{times}},\qquad k \in \{1,\dots,d\}.
\end{align*}
Next, we utilize the associativity of $\otimes$, as well as the identities
\begin{align*}
(A \otimes B)^{-1} = A^{-1} \otimes B^{-1}, \qquad (A \otimes B) (C\otimes D) = (AC) \otimes (BD)
\end{align*}
for matrices $A,B,C,D$ of appropriate size.
Invoking integral transformation rules and applying the preconditioner $P^e = M_L^e (M_C^e)^{-1}$ to $\vec C^e = |K^e| (\mat J^e)^{-T} \hat{\vec C} =\, $adj$(\mat J^e)^T \hat{\vec C}$, where adj$(\cdot)$ denotes the adjugate of a matrix, we observe that
\begin{align}\label{eq:outerprod}
&\tilde C_k^e = \frac{|K^e|}{(p+1)^d} \l |K^e| \hat M_C^{1D}\otimes \dots \otimes  \hat M_C^{1D} \r^{-1}
\l \sum_{l=1}^d \text{adj}(\mat J^e)_{lk} \hat C_l \r \notag\\
&= \sum_{l=1}^d \frac{\text{adj}(\mat J^e)_{lk}}{(p+1)^d} \l \hat M_C^{1D}\otimes \dots \otimes  \hat M_C^{1D} \r^{-1} 
\l \hat M_C^{1D}\otimes \dots \otimes \hat C^{1D} \otimes\dots\otimes  \hat M_C^{1D} \r \notag\\
&= \sum_{l=1}^d \frac{\text{adj}(\mat J^e)_{lk}}{(p+1)^d} \l \mat I \otimes\dots\otimes\mat I \otimes \l\l \hat M_C^{1D}\r^{-1} \hat C^{1D} \r \otimes \mat I \otimes\dots\otimes\mat I  \r.
\end{align}
The fact that $\tilde C_k^e$  has zero entries for all pairs of diagonal neighbors follows from the observation that \eqref{eq:outerprod} admits a decomposition into matrices corresponding to each spatial dimension. 
Due to \eqref{eq:dMat}, the dissipative matrices $D^e$ have the same sparsity pattern as $\tilde C_k^e.\hfill \square$
\begin{example}
For $p=1$ we have
\begin{align*}
\hat M_C^{1D} = \frac 1 6\begin{bmatrix}
2 & 1 \\ 1 & 2
\end{bmatrix}, \qquad
2\,\hat C^{1D} = \l \hat M_C^{1D}  \r^{-1} \hat C^{1D} = \begin{bmatrix}
-1 & 1 \\ -1 & 1
\end{bmatrix}
\end{align*}
and, therefore, for a (quadrilateral) box element in 2D, it follows that
\begin{align*}
\tilde C_1^e = \frac{\text{adj}(\mat J^e)_{11}}{4} \begin{bmatrix*}[r]
-1 & \phantom{-}1 & 0 & \phantom{-}0 \\
-1 & 1 & 0 & 0 \\
0 &0 & -1 & 1 \\
0 &0 & -1 & 1
\end{bmatrix*}
+ \frac{\text{adj}(\mat J^e)_{21}}{4} \begin{bmatrix*}[r]
-1 & 0 & \phantom{-}1 & \phantom{-}0 \\
0 & -1 & 0 & 1 \\
-1 & 0 & 1 & 0 \\
0 & -1 & 0 & 1
\end{bmatrix*}.
\end{align*}
The matrix $\tilde C_2^e$ is obtained similarly.
Note that the entries corresponding to pairs of diagonal neighbors $(1,4),\,(2,3),\,(3,2),\,(4,1)$ are zero.
\end{example}

\section{Calculation of preconditioned gradient matrices}
Formula \eqref{eq:outerprod} not only illustrates the proposition that dissipation is only added in Cartesian directions but also presents a practical way of computing $\tilde C_k^e$ similarly to the 1D case. 
For box elements, $\tilde C_k^e$ should be calculated via \eqref{eq:outerprod} because computation/application of the preconditioner $P^e$ requires inversion of $\hat M_C^{1D}$, which may give rise to significant errors in the case of high order Bernstein polynomials.
This issue arises due to the ill-conditioning of Bernstein mass matrices, which can destroy the sparsity of $\tilde C_k^e$ for orders as small as $p=4$.
Fortunately, we do not need to invert $\hat M_C^{1D}$ since we can apply a formula derived by Lohmann et al. \cite{lohmann2017} to our setting thus:
\begin{align*}
\l \l \hat M_C^{1D}  \r^{-1} \hat C^{1D} \r_{ij} = \begin{cases}
p+2-j & \mbox{if } i=j-1, \\
2(j-1)-p & \mbox{if } i=j, \\
-j & \mbox{if } i=j+1, \\
0 & \mbox{otherwise}. \\
\end{cases}
\end{align*}

Matters become more involved if simplices are considered because a decomposition like \eqref{eq:outerprod} is impossible.
Using degree elevation formulas for Bernstein polynomials \cite{kirby2011}, Kuzmin and Quezada de Luna \cite{kuzmin2020a} derived the identity
\begin{align}\label{eq:simplical}
\nabla b_{\vec \alpha}^p = \sum_{k,l=1}^{d+1}\max\{0, \min\{p, (\alpha_l - \delta_{kl} +1)\}\} \nabla a_k b_{\vec \alpha - \vec e_k + \vec e_l}^p
= \sum_{|\vec \beta| = p} \vec \mu_{\vec \beta} b_{\vec \beta}^p,
\end{align}
where $a_k$ are the barycentric coordinates.
From \eqref{eq:simplical}, one can see that the coefficients $\vec \mu_{\vec \beta}$ are nonzero only for nearest neighbors (for simplices one has also to consider diagonal neighbors).
For a given space dimension $d$ and polynomial degree $p$, the second identity in \eqref{eq:simplical} can be used to compute $\vec \mu_{\vec \beta}$, which was shown in \cite{kuzmin2020a} to coincide with the entries of $\l M_L^e\r^{-1}\tilde{\vec C}^e$ corresponding to pairs of nodes associated with multi-indices $\vec \alpha,\,\vec \beta$.
Below we present MATLAB routines that compute $\tilde{\hat C}_k$ on the reference simplex for any~$p$. These codes are available on GitHub \cite{hajduk2020c} as well.
\cref{code:simplex} relies on \cref{code:tri1,code:tri2}, which assume lexicographical numbering of nodes and only cover the case of triangular elements ($d=2$).
\cref{code:simplex} remains valid for tetrahedra. However, one has to implement analogous mappings.

\begin{lstlisting}[caption={Calculation of preconditioned gradient matrices on simplicial elements.}\label{code:simplex}]
function CTilde = computePrecMatSimplex(dim, p)
N = nchoosek(p+dim,p);    % number of local degrees of freedom
CTilde = zeros(N,N,dim);  % initialization
i2Alpha = mapI2Alpha(p);  % array mapping local indices to multiindices
volRefSimplex = 1 / factorial(dim); % |\hat K|
M_L = volRefSimplex / N;  % diagonal entries of lumped mass matrix \hat M_L
gradBary = [-ones(1,dim); eye(dim)]; % gradients of barycentric coordinates
for m = 1:dim             % compute \tilde{ \hat C}_m
  for j = 1:N             % compute (\tilde{ \hat C}_m)_{:,j}
    alpha = i2Alpha(j,:); % multiindex for j-th local index
    for k = 1:dim+1
      for l = 1:dim+1
        beta = alpha;
        beta(k) = beta(k) - 1;
        beta(l) = beta(l) + 1;
        if (beta(l) > p || beta(k) < 0)
          continue;
        end
        i = getIfromAlpha(p,beta); % local index corresponding to beta
        CTilde(i,j,m) = CTilde(i,j,m) + gradBary(k,m) ...          % cf.
                                        * (alpha(l) - (l==k) + 1); % (B.1)
      end
    end
  end
end
CTilde = M_L * CTilde; % multiplication with lumped mass matrix
end
\end{lstlisting}

\begin{lstlisting}[caption={Mapping of local numbers to multiindices on triangular elements.}\label{code:tri1}]
function i2alpha = mapI2Alpha(p)
dim = 2;                      % spacial dimension
N = nchoosek(p+dim,p);        % number of local degrees of freedom
i2alpha = zeros(N,dim+1);     % initialization
a = [p 0 0];                  % first multiindex
ctr = 1;                      % counter variable
for j = 0:p                   % step through DOF in y-direction
  for i = 0:p-j               % step through DOF in x-direction
    i2alpha(ctr,:) = a;       % fill map entry
    ctr = ctr+1;              % update counter
    a(1:2) = a(1:2) + [-1 1]; % go to next multiindex in x-direction
  end
  a(1) = p-1-j;               % go to next multiindex in y-direction
  a(2) = 0;                   % go to first DOF in x-direction
  a(3) = a(3)+1;              % go to next DOF in y-direction
end
end
\end{lstlisting}

\begin{lstlisting}[caption={Mapping of multiindices to local numbers on triangular elements.}\label{code:tri2}]
function i = getIfromAlpha(p,alpha)
i = (p+1)*alpha(3) - alpha(3)*(alpha(3)-1)/2 + alpha(2);
i = i+1; % MATLAB counts indices starting at 1.
end
\end{lstlisting}

\bibliographystyle{abbrvnat}
\bibliography{\pathLaTeX/references}
\addcontentsline{toc}{section}{Bibliography}
\end{document}